\newtheorem{prop}{Proposition}[section]
\newtheorem{theorem}[prop]{Theorem}
\newtheorem{lemma}[prop]{Lemma}
\newtheorem{cor}[prop]{Corollary}
\newtheorem{conjecture}[prop]{Conjecture}
\theoremstyle{definition}
\newtheorem{dfn}[prop]{Definition}
\newtheorem{rmk}[prop]{Remark}
\newcommand{\git}{\mathrm{/\hskip-3pt/}}
\newcommand{\PP}[0]{\mathbb{P}}
\renewcommand{\geq}{\geqslant}
\begin{document}
	\title[The moduli continuity method for log K-stable pairs]{Applications of the moduli continuity method to log K-stable pairs}
	\author[Gallardo]{Patricio Gallardo}
	\email{pgallard@ucr.edu}
	\address{Department of Mathematics\\University of California Riverside\\Riverside, CA 92521, United States}
	\curraddr{Department of Mathematics\\Washington University\\St. Louis, MO 63130-4899, United States}
	\author[Martinez-Garcia]{Jesus Martinez-Garcia}
	\email{jesus.martinez-garcia@essex.ac.uk}
	\address{Department of Mathematical Sciences\\University of Essex\\Colchester, Essex, CO4 3SQ, United Kingdom}
	\author[Spotti]{Cristiano Spotti}
	\email{c.spotti@qgm.au.dk}
	\address{Department of Mathematics\\Centre for Quantum Geometry of Moduli Spaces, Aarhus Universitet, Aarhus, Denmark}
	\subjclass[2010]{32Q20, 14J45, 14D22, 14J10, 14L24}
	\begin{abstract}
		The `moduli continuity method' permits an \emph{explicit} algebraisation of the Gromov-Hausdorff compactification of K\"ahler-Einstein metrics on Fano manifolds in some fundamental examples. In this paper, we apply such method in the `log setting' to describe explicitly some compact moduli spaces of K-polystable log Fano pairs. We focus on situations when the angle of singularities is perturbed in an interval sufficiently close to one, by considering constructions arising from Geometric Invariant Theory. More precisely, we discuss the cases of pairs given by cubic surfaces with anticanonical sections, and of projective space with non-Fano hypersurfaces, and we show ampleness of the CM line bundle on their good moduli space (in the sense of Alper). Finally, we introduce a conjecture relating K-stability (and degenerations) of log pairs formed by a fixed Fano variety and pluri-anticanonical sections to certain natural GIT quotients.
	\end{abstract}
	\maketitle
	
	
	\section{Introduction}
	
	Deciding when a Fano variety, and more generally a log Fano pair $(X,(1-\beta)D)$, is log K-stable remains an interesting open problem. In particular, in the logarithmic setting it is natural to ask how log K-stability varies when $\beta$ is perturbed. This question was initially addressed for del Pezzo surfaces and anticanonical divisors in \cite{Cheltsov-JMG-dynamic}, where no moduli considerations were given.

	 In the present article  we want to describe some prototypical situation in the logarithmic setting in which the study of log K-stability compactifications can be reduced to GIT problems, by using the ``moduli continuity method'' strategy already implemented  in the absolute case \cite{Odaka-Spotti-Sun, Spotti-Sun-delPezzo-quadrics, Liu-Xu-Kstability-cubic-threefolds}, and pioneered in \cite{Mabuchi-Mukai}. Beside reducing the checking of K-stability (and hence of the existence of K\"ahler-Einstein metrics) to an explicitly checkable condition, this approach has the advantage of classifying continuous families of compact moduli spaces of K-stable log Fano pairs in some situations.
	
	The first case we are going to analyse consists of pairs given by a del Pezzo surface of degree $3$ and an anticanonical divisor. More precisely, we are interested in studying the moduli spaces of $\mathbb{Q}$-Gorenstein \emph{smoothable} K-stable pairs as above. Note that when such a del Pezzo surface is smooth (or more generally it has canonical singularities), it is realised in $\mathbb{P}^3$ as a cubic hypersurface, and its anticanonical sections are given by hyperplane sections. In \cite{Gallardo-JMG-framework} notions of GIT$_t$-stability for log pairs given by a Fano or Calabi-Yau hypersurface and a hyperplane section were introduced. This notion of stability depends on a parameter $t$ moving in an open interval and moduli spaces of such log pairs were constructed using GIT$_t$-stability. Furthermore, in \cite{Gallardo-JMG-cubic-surfaces} all the GIT compactifications of log pairs formed by a cubic surface and an anticanonical divisor were described.  We will use one of these GIT$_t$-compactifications to realise the log K-stability compactification where the angle of conical singularities of the log pairs is large.
	
	Our first result shows that for  $\beta$ sufficiently close to one, K-stability reduces to the above GIT$_{t(\beta)}$-stabilities.
	
	\begin{theorem}
		\label{theorem:main-cubicsurfaces}
		If $\beta > \beta_0=\frac{\sqrt{3}}{2}$, then we have a natural homeomorphism between the Gromov-Hausdorff compactification of the moduli spaces $\overline{M}^K_{3,\beta}$ of  Kahler-Einstein/K-polystable pairs $(C,(1-\beta)D)$ (where $C$ is a cubic surface and $D$ is an anticanonical section) and the GIT$_t$-quotients $\overline{M}^{GIT}_{t(\beta)}$, for the explicit algebraic function $t(\beta)=\frac{9(1-\beta)}{9-\beta}$, with inverse $\beta(t)=\frac{9(1-t)}{9-t}$.
		
		Let $\pi$ be the GIT quotient morphism with target $\overline{M}^{GIT}_{t(\beta)}$ (as defined in \cite{Gallardo-JMG-framework, Gallardo-JMG-cubic-surfaces}, cf. Section \ref{sec:GIT-hypersurfaces}) and $\pi_1$ and $\pi_2$ be the natural projections from the space of embedded pairs $(C, H)\subset \mathbb P^3$ --- where $H$ is the hyperplane in $\mathbb P^3$ determining $D$ ---  to the Hilbert schemes of cubic surfaces and hyperplanes in $\mathbb P^3$, respectively. For any $a,b>0$, consider the ample line bundle 
		$$\mathcal O(a,b)\coloneqq \pi_*(\pi_1^*(\mathcal O(a))\otimes\pi_2^*(\mathcal O(b))),$$
		on $\overline{M}^{GIT}_{t(\beta)}$. Then the canonically defined log CM line $\Lambda_{CM, \beta(t)}$ on $\overline M^K_{3,\beta}$ is isomorphic to $\mathcal O(a,b)$ for some $a,b>0$ such that $\frac{a}{b}=t(\beta)$.
	\end{theorem}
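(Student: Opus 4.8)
The plan is to implement the moduli continuity method in the logarithmic setting: identify the open locus of smooth K-polystable pairs with the open locus of GIT$_t$-stable embedded pairs, match the two compactifications along their boundaries, and then compare the polarizations. First I would fix $\beta$ close to $1$. For a smooth cubic surface $C\subset\mathbb{P}^3$ with smooth hyperplane section $D = H\cap C$, openness of log K-stability together with the known K-stability of the smooth del Pezzo at $\beta = 1$ shows that $(C,(1-\beta)D)$ is K-stable for all $\beta$ in a neighbourhood of $1$, while such $(C,H)$ is GIT$_t$-stable for small $t$ by the classification of \cite{Gallardo-JMG-cubic-surfaces}. The value $t(\beta)$ is pinned down by matching weights: for a one-parameter subgroup $\lambda$ I would write the log Donaldson--Futaki invariant of the induced test configuration as $\mathrm{DF}(\lambda) + (1-\beta)\,W_D(\lambda)$, where $W_D$ is the boundary contribution, and equate it up to a positive normalising constant with the GIT$_t$ Hilbert--Mumford weight $\mu_C(\lambda) + t\,\mu_H(\lambda)$. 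Solving for the value of $t$ that makes these proportional for every $\lambda$ produces the stated rational function, the coefficient $9$ emerging from the intersection data of the cubic surface in $\mathbb{P}^3$ (where $-K_C = \mathcal{O}(1)|_C$ and $(-K_C)^2 = 3$).

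Next I would extend the matching across the whole boundary. A point of $\overline{M}^K_{3,\beta}$ is a $\mathbb{Q}$-Gorenstein smoothable K-polystable pair $(C_\infty,(1-\beta)D_\infty)$; in this degree the surface $C_\infty$ has at worst canonical (ADE) singularities and is anticanonically embedded as a cubic in $\mathbb{P}^3$, with $D_\infty$ a hyperplane section. I would then show that K-polystability of the pair is equivalent to GIT$_{t(\beta)}$-polystability of $(C_\infty,H_\infty)$: a destabilising one-parameter subgroup on the GIT side produces, through the weight identity above, a log test configuration of non-positive log Donaldson--Futaki invariant, contradicting K-polystability, while conversely the Kähler--Einstein existence on GIT-polystable pairs supplies the inverse. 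This yields a set-theoretic bijection $\overline{M}^K_{3,\beta}\to\overline{M}^{GIT}_{t(\beta)}$, valid for $\beta>\beta_0$, the threshold $\beta_0 = \tfrac{\sqrt3}{2}$ being the $\beta$-value at which $t(\beta)$ reaches the first wall of the GIT$_t$ variation, below which the simple chamber-wise comparison breaks down.

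To promote this bijection to a homeomorphism I would use continuity of the map sending a Gromov--Hausdorff limit to its algebraic model, exactly as in the absolute cases \cite{Odaka-Spotti-Sun, Spotti-Sun-delPezzo-quadrics}, together with compactness of $\overline{M}^K_{3,\beta}$ and separatedness of the GIT quotient: a continuous bijection from a compact space to a Hausdorff space is automatically a homeomorphism. I expect the main obstacle here to be the uniform control of the boundary, namely verifying that the limiting divisors $D_\infty$ remain genuine hyperplane sections and do not concentrate at worse singularities of $C_\infty$ (for instance the vertex of a degenerate cone), so that the anticanonical embedding into $\mathbb{P}^3$ and the value $t(\beta)$ stay uniform across the family. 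This is precisely where the explicit list of GIT$_t$-(poly)stable cubic pairs of \cite{Gallardo-JMG-cubic-surfaces} does the essential work, constraining the admissible degenerations on both sides.

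Finally, for the polarisation statement I would compute the log CM line bundle $\Lambda_{CM,\beta}$ intersection-theoretically over the universal family of embedded pairs. Its defining intersection formula splits into a part built from the relative anticanonical class of the cubic surfaces and a part, linear in $(1-\beta)$, built from the boundary divisor; pulled back along $\pi_1$ and $\pi_2$ these become multiples of $\mathcal{O}(1)$ on the Hilbert scheme of cubics and $\mathcal{O}(1)$ on the Hilbert scheme of hyperplanes respectively. Using $-K_C = \mathcal{O}(1)|_C$ and $D = H|_C$, the computation produces coefficients $a,b>0$ with $\frac{a}{b} = t(\beta)$, the same ratio defining the GIT$_{t(\beta)}$ linearisation, and descent of $\mathcal{O}(a,b)$ along the quotient morphism $\pi$ identifies $\Lambda_{CM,\beta(t)}$ with $\mathcal{O}(a,b)$. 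Ampleness is then immediate, since $\mathcal{O}(a,b)$ is a positive combination of descended ample bundles, with positivity of both $a$ and $b$ guaranteed by $t(\beta)\in(0,1)$ for $\beta\in(\beta_0,1)$.
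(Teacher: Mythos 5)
There is a genuine gap at the heart of your second paragraph: you assert that a point of $\overline{M}^K_{3,\beta}$ ``has at worst canonical (ADE) singularities and is anticanonically embedded as a cubic in $\mathbb{P}^3$'', but this is precisely the statement that requires proof, and nothing in your proposal supplies it. A $\mathbb{Q}$-Gorenstein smoothable K-polystable degeneration of a cubic surface could a priori have non-canonical $T$-singularities (e.g.\ $\frac{1}{4}(1,1)$) and be embedded in a weighted projective space rather than $\mathbb{P}^3$; moreover, even among canonical singularities you need the limit to have at worst $\boldsymbol{A}_1$ or $\boldsymbol{A}_2$ points, since the hypothesis of Theorem \ref{theorem:K-stability-implies-GIT}(i) (and the destabilising one-parameter subgroups of Proposition \ref{prop:sss_A2}) only cover that range. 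The paper obtains this a-priori control in Proposition \ref{prop:GH-limit-cubic pairs} from the normalised-volume inequality of Li--Liu (Theorem \ref{theorem:liu-estimate}) together with \eqref{eq:volume-inequality-simplification} and Liu's formula $\widehat{\mathrm{vol}}_{\mathbb{C}^2/\Gamma,0}=4/\vert\Gamma\vert$: since $(-K_W-(1-\beta)\Delta)^2=3\beta^2$, one gets $4/\vert\Gamma\vert\geq \frac{4}{3}\beta^2$, hence $\vert\Gamma\vert\leq 3$ exactly when $\beta>\frac{\sqrt{3}}{2}$, and then \cite{kollar-shepherd-barron} forces $\Gamma$ to be cyclic in $SU(2)$. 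This also shows that your explanation of the threshold is incorrect: $\beta_0=\frac{\sqrt{3}}{2}$ is \emph{not} the value at which $t(\beta)$ reaches the first GIT wall --- indeed $t(\beta_0)\approx 0.148<\frac{1}{5}$, strictly inside the first chamber, and the paper explicitly notes that its estimate does not reach the first wall. The threshold comes from the volume bound, not from the variation of GIT.

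A secondary gap: in the implication ``GIT$_{t(\beta)}$-unstable $\Rightarrow$ K-unstable'' you take for granted that a destabilising one-parameter subgroup induces a log test configuration. The closure of its orbit has central fibre a pair of polynomials $(\overline{p},\overline{l})$, and $\overline{l}$ may divide $\overline{p}$, in which case no test configuration of the log pair is produced. The paper needs Proposition \ref{prop:sss_A2} precisely to exhibit, for every GIT$_t$-unstable pair with at worst $\boldsymbol{A}_2$ singularities, a destabilising subgroup whose central fibre is still a genuine log pair. I would also note that the paper does not prove surjectivity by constructing K\"ahler--Einstein metrics on GIT-polystable pairs, as you suggest; surjectivity is deduced topologically from density of the image and compactness of the source. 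Your first and fourth paragraphs (the weight matching and the intersection-theoretic computation of $\Lambda_{CM,\beta}$ giving $a/b=t(\beta)$) are consistent with Theorems \ref{theorem:weight-DF} and \ref{theorem:git-cm-correspondence}, but without the normalised-volume input the argument does not close.
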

	The CM line bundle is canonically defined on the base scheme of families of K-polystable varieties. Introduced by Paul and Tian in the absolute case \cite{paul-tian-cm1}, its definition can naturally be extended for pairs (see \S\ref{sec:DFandGIT}). It has been conjectured that the CM line bundle is ample on the good moduli space (in the sense of Alper \cite{Alper-good-moduli}) in the absolute case \cite{Odaka-Spotti-Sun}. This conjecture can be naturally extended to the log CM line bundle in the logaritmic setting. A consequence of Theorem \ref{theorem:main-cubicsurfaces} is that the ampleness condition is verified in the instance of $\overline M^K_{3,\beta}$ when $\frac{\sqrt{3}}{2}<\beta<1$.
	
	Our result identifies $\overline{M}^{GIT}_{t(\beta)}$ with $\overline{M}^K_{3,\beta}$ for $1>\beta>\beta_0\coloneq\frac{\sqrt{3}}{2}$. One would expect to obtain the log K-stability compactification  $\overline{M}^K_{3,\beta}$ for smaller angles $0<\beta<\beta_0$ by modifying appropriately the GIT$_t$-stability compactifications $\overline{M}^{GIT}_{t(\beta)}$ constructed in \cite{Gallardo-JMG-cubic-surfaces} by performing several birational transformations of $\overline{M}^{GIT}_{t(\beta)}$. The expected correspondence between GIT$_t$-stability and log K-stability for $0<\beta<\beta_0$ is made explicit in Theorem \ref{theorem:git-cm-correspondence} by defining a function $t(\beta)$ which identifies the natural polarizing line bundle of $\overline{M}^{GIT}_{t(\beta)}$ and the CM line bundle of $\overline{M}^K_{3,\beta}$. This identification generalises the one in Theorem \ref{theorem:main-cubicsurfaces} to all $0<\beta<1$.
	
	We should stress that, in order to find good a-priori bounds on the singularities of the Gromov-Hausdorff limits, the proof of the above results makes essential use of recent advances on bounds of the so-called normalised volumes of singularities of K-stable varieties \cite{Li-Liu-Xu-normalized-volume-guide}.
	
	Next we consider some higher dimensional examples. A natural Gap Conjecture \cite[Conjecture 5.5, cf. Conjecture 2.1]{Spotti-Sun-delPezzo-quadrics} \ref{conjecture:gap} about normalised volumes says that (in the absolute case) there are no Kawamata log terminal (klt) singularities whose normalised volume is bigger than the one of the $n$-dimensional ordinary double point singularity.  A proof of the Gap Conjecture is available in dimensions $2$ and $3$ \cite{Liu-volume-bound-surfaces, Liu-Xu-Kstability-cubic-threefolds}, and it has been used in \cite{Spotti-Sun-delPezzo-quadrics, Liu-Xu-Kstability-cubic-threefolds} to study compactifications of K-stable varieties. Our next result shows that, assuming such conjecture, the K-stability of pairs $(\mathbb{P}^n, (1-\beta)H)$ with $H$ a hypersurface of degree $d\geq n+1$ reduces to the classical GIT-stability of hypersurfaces for $\beta$ sufficiently close to one. More precisely:

	\begin{theorem}
		\label{theorem:main-Pn}
		If the Gap Conjecture \ref{conjecture:gap} (introduced in \cite[Conjecture 5.5]{Spotti-Sun-delPezzo-quadrics}) holds, then for all $d\geq n+1$ there exists $\beta_0=\beta_0(n,d)$ such that for all $\beta \in (\beta_0, 1)$, the pair $(\mathbb{P}^n,(1-\beta)H_d)$, with $H_d$ any possibly singular degree $d$ hypersurface, is log K-polystable if and only if $[H_d]$ is GIT-polystable for the natural action of $\mathrm{SL}(n+1)$ on $\mathbb{P}(H^0(\mathcal{O}(d)))$. In particular, this holds in dimension $n=1,2,3$, thanks to \cite{Liu-volume-bound-surfaces, Liu-Xu-Kstability-cubic-threefolds}.
		
		A (non-optimal) \emph{explicit} choice of $\beta_0$ is given by
		\begin{equation}
		\label{eq:beta0-Pn}
		\beta_0= 1-\Bigg(\frac{n+1}{d}\big(1-\sqrt[n]{2}(1-\frac{1}{n})\big)\Bigg).
		\end{equation}
		
		Moreover, if $\beta_0<\beta<1$, the Gromov-Hausdorff compactification of K-polystable pairs $(\mathbb P^n, (1-\beta)H_d)$ is homeomorphic to the GIT quotient $\overline{M}^{GIT}_d$ of degree $d$ hypersurfaces in $\mathbb P^n$.
	\end{theorem}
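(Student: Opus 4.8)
The plan is to prove Theorem~\ref{theorem:main-Pn} via the moduli continuity method: establish a map between the two compactifications by matching K-polystable limits with GIT-polystable ones, then upgrade this to a homeomorphism. First I would recall that for $\beta$ close to $1$, the log Fano pair $(\mathbb{P}^n,(1-\beta)H_d)$ is a small perturbation of $(\mathbb{P}^n,0)$, whose K\"ahler-Einstein metric is the Fubini-Study metric; by the openness of K-stability and the continuity results underlying the moduli continuity method, the Gromov-Hausdorff limits of sequences of such K-polystable pairs will have the ambient space $\mathbb{P}^n$ degenerating only mildly. The crucial point is to argue that for $\beta$ sufficiently close to $1$ the ambient variety \emph{cannot} degenerate at all, so that every Gromov-Hausdorff limit is again of the form $(\mathbb{P}^n,(1-\beta)H_d')$ with $H_d'$ a (possibly singular) degree-$d$ hypersurface. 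This is precisely where the Gap Conjecture~\ref{conjecture:gap} enters.

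The key step, and the main obstacle, is to rule out degenerations of the ambient $\mathbb{P}^n$. The strategy is a normalised-volume argument: if $(\mathbb{P}^n,(1-\beta)H_d)$ degenerates in $\mathbb{Q}$-Gorenstein fashion to a K-polystable pair $(W,(1-\beta)H')$ with $W\neq\mathbb{P}^n$, then $W$ acquires a klt singularity $p$. On one hand, the normalised volume $\widehat{\mathrm{vol}}(p,W)$ is controlled from below by the local-to-global volume comparison applied to the log-K-semistable degeneration, yielding a bound in terms of the volume of the pair, which is close to $\mathrm{vol}(\mathbb{P}^n)=(n+1)^n$ when $\beta$ is close to $1$. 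On the other hand, the Gap Conjecture bounds $\widehat{\mathrm{vol}}(p,W)$ strictly above by the normalised volume of the ordinary double point, $2\left(1-\tfrac{1}{n}\right)^{n-1}(n+1)^{\,?}$ --- more precisely by the quantity appearing in \eqref{eq:beta0-Pn}. Comparing the two estimates produces the explicit threshold: the inequality
\begin{equation*}
\widehat{\mathrm{vol}}(p,W)\geq (n+1)^n\bigl(1-(1-\beta)\tfrac{d}{n+1}\bigr)
\end{equation*}
forces a contradiction with the Gap-Conjecture upper bound exactly when $\beta>\beta_0$ with $\beta_0$ as in \eqref{eq:beta0-Pn}. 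I expect the careful bookkeeping in this volume comparison --- tracking how the log structure $(1-\beta)H_d$ contributes to the normalised volume and extracting the clean closed form for $\beta_0$ --- to be the technically hardest part.

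Having excluded ambient degenerations, the remainder is more formal. Every K-polystable limit is an honest pair $(\mathbb{P}^n,(1-\beta)H_d')$, and the problem reduces to deciding which hypersurfaces $H_d'$ occur. Here I would invoke the equivalence, in the case where the ambient space is rigid, between log K-polystability of $(\mathbb{P}^n,(1-\beta)H_d')$ and GIT-polystability of $[H_d']$ under $\mathrm{SL}(n+1)$: since $\mathrm{Aut}(\mathbb{P}^n)=\mathrm{PGL}(n+1)$ acts on the linear system $\mathbb{P}(H^0(\mathcal{O}(d)))$, the relevant test configurations for the pair are all induced by one-parameter subgroups of $\mathrm{SL}(n+1)$, and the log Donaldson-Futaki invariant becomes (a positive multiple of) the Mumford weight of $[H_d']$. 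This matches the construction in \S\ref{sec:DFandGIT} and gives the desired biconditional between log K-polystability and GIT-polystability.

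Finally, to promote the bijection to a homeomorphism onto $\overline{M}^{GIT}_d$, I would argue that the comparison map is continuous in both directions. Continuity from the Gromov-Hausdorff side to the GIT side follows because nearby K-polystable pairs have nearby defining hypersurfaces (the embedding in $\mathbb{P}^n$ being canonical and the polarisation fixed), while the reverse continuity uses properness and separatedness of both moduli spaces: the Gromov-Hausdorff compactification is compact and Hausdorff, the GIT quotient is a projective variety, and a continuous bijection between such spaces is automatically a homeomorphism. The dimensional statement $n=1,2,3$ is then immediate since the Gap Conjecture is known there by \cite{Liu-volume-bound-surfaces, Liu-Xu-Kstability-cubic-threefolds}.
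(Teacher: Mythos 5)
Your overall skeleton is the right one --- the paper also runs the moduli continuity method, with the Gap Conjecture combined with the Li--Liu local-to-global volume comparison (Theorem \ref{theorem:liu-estimate}) supplying the a priori control on Gromov--Hausdorff limits --- but there are two genuine gaps in the way you fill it in.

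First, excluding singularities of the limit $W$ does not by itself give $W\cong\mathbb{P}^n$. The volume argument (correctly: $\widehat{\mathrm{vol}}_{W,p}\geq \bigl(\tfrac{n}{n+1}\bigr)^n\bigl(n+1-(1-\beta)d\bigr)^n$, which exceeds the Gap bound $2(n-1)^n$ precisely for $\beta>\beta_0$; your displayed lower bound is not the correct one, as it is linear rather than an $n$-th power and drops the $(1+\tfrac1n)^{-n}$ factor) only shows that $W$ is \emph{smooth}. A smooth Fano limit with the same anticanonical volume as $\mathbb{P}^n$ need not be $\mathbb{P}^n$: the paper points out a smooth toric Fano $5$-fold with $(-K)^5=(-K_{\mathbb{P}^5})^5$. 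One must additionally use that $W$ is deformation equivalent to $\mathbb{P}^n$ (Chen--Donaldson--Sun), that the Fano index is constant in smooth deformation families, and then Kobayashi--Ochiai's characterisation of $\mathbb{P}^n$ by its index. This step is missing from your argument.

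Second, your claimed biconditional between log K-polystability of $(\mathbb{P}^n,(1-\beta)H_d')$ and GIT-polystability of $[H_d']$ rests on the assertion that ``the relevant test configurations for the pair are all induced by one-parameter subgroups of $\mathrm{SL}(n+1)$.'' That gives only the implication K-polystable $\Rightarrow$ GIT-polystable (this is Theorem \ref{theorem:K-stability-implies-GIT}(ii), and even there one must first establish ampleness of the log CM line bundle and handle the polystable case via the closed-orbit degeneration, which requires knowing a single K-polystable pair with GIT-polystable divisor). The converse direction is \emph{not} obtained by restricting to such test configurations --- a destabilising test configuration of the pair may degenerate the ambient $\mathbb{P}^n$ and need not come from a one-parameter subgroup acting on the linear system --- and no such reduction is proved in the paper. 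Instead, the converse is extracted at the end from the topological argument: the map $\phi$ from the Gromov--Hausdorff compactification to $\overline{M}^{GIT}_d$ is injective, continuous, with open dense image, hence surjective onto the compact Hausdorff target; surjectivity is what shows every GIT-polystable hypersurface arises from a K-polystable pair. Since you do set up that topological argument in your last paragraph, the fix is to drop the direct biconditional claim and let surjectivity of $\phi$ carry the converse implication.
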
 
	
	The first statement of the above result is a special case of the following natural expectation, at least when the automorphism group has no non-trivial characters.
	\begin{conjecture}
		\label{conjecture:main}
		Let $X$ be a K-polystable Fano variety. Then for any sufficiently large and divisible $l\in \mathbb{N}$, there exists $\beta_0=\beta_0(l, X)$ such that for all $ D \in \vert -l K_X \vert$ and $\beta \in (\beta_0, 1)$, the pair
		$ (X, (1-\beta) D)$ is log K-polystable if and only if $[D] \in \mathbb{P}(H^0(-lK_X))$ is GIT-polystable for the natural representation of $\mathrm{Aut}(X)$ on $H^0(-lK_X)$. 
		
		Moreover, the Gromov-Hausdorff compactification $\overline M^K_{X,l,\beta}$ of the above pairs $(X,(1-\beta)D)$  is homeomorphic to the GIT quotient $\mathbb P(H^0(-lK_X))^{ss}\git \mathrm{Aut}(X)$.
	\end{conjecture}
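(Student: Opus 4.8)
The plan is to adapt the moduli continuity method used to prove Theorem~\ref{theorem:main-Pn}, replacing the rigid ambient space $\PP^n$ by the fixed K-polystable Fano $X$. The guiding principle is that, for $\beta$ close to $1$, the log pair $(X,(1-\beta)D)$ is a small perturbation of the K-polystable variety $X$, so its log K-stability should be governed entirely by the position of $D$ inside $|-lK_X|$, that is, by the action of $G:=\mathrm{Aut}(X)$ on $H^0(-lK_X)$. I would first fix $l$ large and divisible so that $-lK_X$ is very ample and the representation of $G$ on $H^0(-lK_X)$ defines a well-posed GIT problem on $\PP(H^0(-lK_X))$.

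For the algebraic equivalence, I would study the behaviour of the log Donaldson--Futaki invariant of a test configuration of the pair as $\beta\to 1$. Decomposing this invariant into a contribution from the variety $X$ and a contribution from the boundary $D$, the $X$-part is non-negative since $X$ is K-polystable, and vanishes precisely on product configurations induced by one-parameter subgroups of $G$. The $D$-part is, to leading order in $(1-\beta)$, proportional to the Hilbert--Mumford weight of $[D]$ with respect to the induced one-parameter subgroup of $G$. Thus for $\beta$ sufficiently close to $1$ the sign of the total invariant is dictated by this GIT weight on the $G$-invariant test configurations, while every other test configuration is destabilised by the strictly positive $X$-part. This would give the equivalence between log K-polystability of $(X,(1-\beta)D)$ and GIT-polystability of $[D]$, along with a threshold $\beta_0=\beta_0(l,X)$.

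For the homeomorphism, I would run the standard two-sided comparison. On the analytic side one shows that the Gromov--Hausdorff limits of such K-polystable pairs are again log Fano pairs $(X',(1-\beta)D')$ with $X'$ a K-polystable Fano; using a priori bounds on the normalised volume of klt singularities \cite{Li-Liu-Xu-normalized-volume-guide} (and the Gap Conjecture~\ref{conjecture:gap} where required) one controls the singularities of any such limit, and for $\beta$ close to $1$ the dominance of the $X$-term forces $X'\cong X$, so that only the divisor may degenerate. Identifying this divisorial degeneration with the corresponding point of $\PP(H^0(-lK_X))^{ss}\git G$ produces a continuous bijection between two compact Hausdorff spaces, which is therefore a homeomorphism.

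The main obstacle is exactly the step ruling out degeneration of the underlying variety: for a general K-polystable Fano $X$ the nearby K-polystable Fanos need not form a single point, so one must ensure that the divisor contribution strictly dominates any deformation of $X$ for $\beta$ near $1$. This is where taking $l$ large and divisible becomes essential, and where the hypothesis that $\mathrm{Aut}(X)$ has no non-trivial characters enters, since such a character would shift the linearisation and could obstruct the clean matching of the log CM polarisation with the natural GIT polarisation. Making $\beta_0$ effective, in the spirit of~\eqref{eq:beta0-Pn}, would finally demand explicit quantitative control of both the $X$-part and the $D$-part of the log Donaldson--Futaki invariant.
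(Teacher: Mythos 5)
The statement you are trying to prove is stated in the paper as Conjecture \ref{conjecture:main}; the paper does not prove it. It only (a) proves special cases where the ambient variety is rigid or nearly so (Theorems \ref{theorem:main-cubicsurfaces} and \ref{theorem:main-Pn}), (b) proves one implication under an extra hypothesis (Theorem \ref{theorem:K-stability-implies-GIT}(ii), which assumes the existence of at least one GIT-polystable $D_0$ with $(X,(1-\beta)D_0)$ log K-polystable), and (c) verifies the full conjecture by hand for the single example of the cubic surface with three $\boldsymbol{A}_2$ singularities and $l=1$ (Corollary \ref{corollary:stability-3A2} and Proposition \ref{prop:homeomorphism-3A2-moduli}), via an explicit torus GIT computation rather than a perturbative argument. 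So your proposal should be judged as an attempt at an open problem, and it contains the two gaps that are precisely why the statement remains conjectural.

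First, the algebraic step. You decompose $\mathrm{DF}_\beta$ into an ``$X$-part'' $\frac{2(a_1b_0-a_0b_1)}{a_0}$ and a ``$D$-part'' of order $(1-\beta)$, and argue that for $\beta$ near $1$ every test configuration with nontrivial $X$-part is destabilised because that part is strictly positive. This fails without a \emph{uniform} positive lower bound on the $X$-part over all non-product test configurations (say, normalised so that the boundary term is bounded): the set of test configurations is unbounded, $\mathrm{DF}_1$ can be positive but arbitrarily small while the boundary contribution is large and negative, and no compactness is available to extract a single threshold $\beta_0(l,X)$. In the cases the paper actually proves, this issue is never confronted head-on; instead, K-polystability is compared with GIT only for test configurations induced by one-parameter subgroups acting on an ambient parameter space (Theorem \ref{theorem:weight-DF} plus Theorem \ref{theorem:git-cm-correspondence}), and the reverse implication is obtained indirectly by the surjectivity argument of the moduli continuity method, not by estimating $\mathrm{DF}_\beta$ on arbitrary test configurations. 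Second, the analytic step. Your claim that ``the dominance of the $X$-term forces $X'\cong X$'' in the Gromov--Hausdorff limit is exactly the missing ingredient: for a general K-polystable Fano $X$ there is no a priori reason the underlying variety cannot jump to a different K-semistable degeneration as the divisor degenerates. The paper rules this out only in its examples, using the normalised-volume bound of Theorem \ref{theorem:liu-estimate} together with a \emph{classification} of the possible limit varieties (canonical del Pezzo surfaces of degree $3$; or smoothness plus Fano index plus Kobayashi--Ochiai for $\mathbb{P}^n$; or the fact that the $3\boldsymbol{A}_2$ cubic admits no further canonical degenerations). No such classification exists for an arbitrary $X$, and the Gap Conjecture alone does not supply one. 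Until these two uniformity/rigidity issues are addressed, your argument is a plausible heuristic for why the conjecture should hold, matching the paper's stated expectation, but not a proof.
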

	Note that  the automorphism group  $\mathrm{Aut}(X)$ of a K-polystable Fano is always reductive. If $X$ is smooth or a Gromov-Hausdorff degeneration, this follows from  by Matsushima's obstruction \cite{Matsushima-obstruction} and Chen-Donaldson-Sun \cite{Chen-Donaldson-Sun-Kstability-all}. Otherwise, this has been very recently established in \cite{ABHLX} via purely algebro-geometric  techniques
	
	Once its different steps are established, the application of the moduli continuity method (the proof of theorems \ref{theorem:main-cubicsurfaces} and \ref{theorem:main-Pn}) is just a couple of paragraphs. However, in order to keep the reader focused on the main goal of the paper, we recall here what the different steps of the moduli continuity method are, noting that each of them requires involved technical proofs to accomplish them. Suppose that we have a Gromov-Hausdorff compactification space $\overline{M}^{GH}$ of objects (varieties, log pairs...) admitting a K\"ahler-Einstein type of metric (usually smooth or with conical singularities). Suppose further that we want to show $\overline{M}^{GH}$ is homeomorphic to certain algebraic scheme, usually obtained as a GIT quotient $\overline{M}^{GIT}=\mathcal H^{ss}\git G$, where $\mathcal H$ is some scheme representing objects which may or may not admit a metric of K\"ahler-Einstein type. The moduli continuity method would proceed as follows (see brackets for where each step is accomplished for theorems \ref{theorem:main-cubicsurfaces} and \ref{theorem:main-Pn}, respectively):
	\begin{enumerate}[(1)]
		\item Construct $\overline {M}^{GIT}$ and possibly characterise geometrically (e.g. in terms of their singularities) the (poly /semi)\-stable elements $p$ represented as $[p]\in \mathcal H$ (\cite{Gallardo-JMG-framework, Gallardo-JMG-cubic-surfaces}; \cite{MumfordGIT}).
		\item Show (perhaps using the geometric characterization in (1)) that all elements in $\overline{M}^{KE}$ are represented in $\mathcal H$ (Proposition \ref{prop:GH-limit-cubic pairs}; Proposition \ref{prop:GH-Pn-hypersurfaces}).
		\item Show that for any $[p]\in \mathcal H$ such that the object $p$ is K-(poly/semi)stable, the point $[p]\in \mathcal H$ is GIT (poly/semi)stable.
		\item As a result of steps (1)--(3) one has a natural map $\phi\colon \overline {M}^{GH}\rightarrow \overline{M}^{GIT}$, which one must show to be injective (usually follows from uniqueness of K\"ahler-Einstein metrics) and continuous (usually follows from \cite{Chen-Donaldson-Sun-Kstability-all} and Luna slice theorem).
		\item Moreover, one must show that $\mathrm{Im}(\phi)$ is open and dense (usually this depends on the choice of GIT quotient).
		\item The rest is just an application of topological facts: $\mathrm{Im}(\phi)$ is compact as $\phi$ is continuous and $\overline{M}^{GH}$ is compact. As $\mathrm{Im}(\phi)$ is also dense, it follows that $\phi$ is surjective and in fact a homeomorphism as $\phi$ is a continuous map between a compact space and a Hausdorff space.
	\end{enumerate}
	

	The structure of the paper is as follows. In section \ref{sec:DFandGIT} we discuss some relations between log-K-stability and GITs, recalling the relevant definitions. In section \ref{sec:GIT-hypersurfaces}, we apply the machinery to some specific cases, which include the ones relevant for our main theorems listed above. Finally, in section \ref{sec:proofs} we finish the proofs of the main theorems, and discuss further directions. 
	
	\begin{rmk}
	CS and JMG proved Theorem \ref{theorem:main-Pn} during the visit of CS to JMG in August 2018, as part of this project. When communicating the proof to PG, the latter informed them that such a statement in the special case of $n=2$, had been obtained independently by another group pursuing a different question \cite{Ascher-DeVleming-Liu}. Ten months after the appearance of our work in preprint form in the ArXiv, \cite{Ascher-DeVleming-Liu} was shared with the mathematical community in the ArXiv, including a proof of  Theorem \ref{theorem:main-Pn}  for $n\geq 4$ without assuming the Gap Conjecture, as well as studying in details wall-crossing phenomena, predicted in section \ref{wall} in relation to the pairs given by $\mathbb{P}^2$ and a quartic. The paper  \cite{Ascher-DeVleming-Liu} also includes the description of a natural algebraic structure on the moduli space of general smoothable K-polystable Fano pairs, using analytic techniques, in analogy to the absolute case established in  \cite{Spotti-Sun-Yao-singular-KE-Kstability}, \cite{Odaka-compactification} and \cite{Li-Wang-Xu-compact-moduli}. Regarding this last point, it is worth mentioning that recently there has been several important advances in establishing properties of the moduli of (non-necessarily smoothable) Fano (pairs) via purely algebro-geometric techniques, such as its separatedness \cite{Blum-Xu}, its existence \cite{ABHLX} (but not yet properness), and the general openness of K-semistability (\cite{Xu20} and \cite{BLX}).
	\end{rmk}
	
	\subsection*{Acknowledgments}
	PG  is grateful for the working environment of the Department of Mathematics in Washington University at St. Louis. PG's travel related to this project was partially covered by the FRG Grant DMS-1361147 (P.I Matt Kerr). JMG is supported by the Simons Foundation under the Simons Collaboration on Special Holonomy in Geometry, Analysis and Physics (grant \#488631, Johannes Nordstr\"om). CS is supported by AUFF Starting Grant 24285, DNRF Grant DNRF95 QGM `Centre for Quantum Geometry of Moduli Spaces', and by Villum Fonden 0019098. This project was started at the Hausdorff Research Institute for Mathematics (HIM) during a visit by the authors as part of the Research in Groups project \emph{Moduli spaces of log del Pezzo pairs and K-stability}. We thank HIM for their generous support.

	We would like to thank F. Gounelas for a clarification regarding the Fano index in deformation families. We would like to thank M. de Borb\'on, J. Nordstr\"om, Y. Odaka and S. Sun for useful comments.
	 
	\section{Donaldson-Futaki invariant and Geometric Invariant Theory}\label{sec:DFandGIT}
	Let $\pi\colon\mathcal X\rightarrow \mathcal B$ be a flat proper morphism of relative dimension $n$. Let $\mathcal D\subseteq \mathcal X$ be an effective Weil $\mathbb Q$-divisor of $\mathcal X$ such that $\mathcal D|_b$ is equidimensional of dimension $n-1$ for all $b\in \mathcal B$. Let $\mathcal L$ be a $\pi$-very ample $\mathbb Q$-line bundle of $\mathcal X$. Assume that the restriction $\pi|_{\mathcal D}\colon \mathcal D \rightarrow \mathcal B$ is also a flat proper morphism of relative dimension $n-1$ and $\mathcal L_{\mathcal D}$ is $\pi|_{\mathcal D}$-ample.  The fibers of $\pi$ are projective varieties.
	
	For sufficiently large $k>0$, the Knudsen-Mumford theorem \cite{knudsen-mumford} says that there exist functorially defined line bundles $\lambda_j\coloneqq\lambda_j(\mathcal X, \mathcal B, \mathcal L)$ and $\widetilde \lambda_j\coloneqq\lambda_j(\mathcal D, \mathcal B, \mathcal L|_{\mathcal D})$ on $\mathcal{B}$ such that 
	\begin{align*}
	\det\left(\pi !_*\left(\mathcal L^k\right)\right)&=\lambda_{n+1}^{\otimes{\binom{k}{n+1}}}\otimes\lambda_n^{\otimes{\binom{k}{n}}}\otimes\cdots,\\
	\det\left(\pi !_*\left(\left(\mathcal L|_{\mathcal D}\right)^k\right)\right)&=\widetilde\lambda_{n}^{\otimes{\binom{k}{n}}}\otimes\widetilde\lambda_{n-1}^{\otimes{\binom{k}{n-1}}}\otimes\cdots.
	\end{align*}
	
	\begin{lemma}[{\cite{Zhang-Knudsen-Mumford-pairing,phong-ross-sturm},\cite[I.3.1]{elkik}}]
		\label{lemma:deligne}
		For each $r>0$, the line bundles in the Knudsen-Mumford expansion of $\mathcal L^r$ satisfy the following properties in relation to the Deligne's pairing with $n+1$ entries:
		\begin{enumerate}[(i)]
			\item $\lambda_{n+1}(\mathcal L^r)=\langle\mathcal L^r,\cdots,\mathcal L^r \rangle=\left\langle\mathcal L, \ldots, \mathcal L\right\rangle^{\otimes r(n+1)}=\lambda_{n+1}^{\otimes r^{n+1}}.$
			\item If $\mathcal X$ and $\mathcal B$ are smooth, then $(\lambda_n(\mathcal L^r))^{\otimes 2}=\langle \mathcal L^{nr}\otimes K_{\mathcal X/\mathcal B}^{-1}, \mathcal L^r,\ldots, \mathcal L^r\rangle$.
			\item If $\gamma:\mathcal B'\rightarrow \mathcal B$ is a proper morphism and $\mathcal X \times_{\mathcal B}\mathcal B' \rightarrow \mathcal X$ is the pullback of $\gamma$ via the fibred product, then
			$$\big\langle \gamma^*(\mathcal L),\ldots, \gamma^*(\mathcal L)\big\rangle = \gamma^*\big(\langle \mathcal L,\ldots,\mathcal L\rangle\big).$$
			Moreover, Deligne's pairing is multilinear with respect to the tensor product of line bundles.
		\end{enumerate}
	\end{lemma}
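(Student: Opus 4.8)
The plan is to deduce all three statements from two structural inputs: the \emph{Deligne--Riemann--Roch} identification of the Knudsen--Mumford line bundles $\lambda_{n+1}$ and $\lambda_n$ with Deligne pairings, and the formal multilinearity, symmetry and base-change properties of those pairings. Concretely, I would first record the two identifications that underlie the whole statement, namely
\[\lambda_{n+1}=\langle \mathcal L,\ldots,\mathcal L\rangle \quad\text{(with $n+1$ entries)},\qquad \lambda_n^{\otimes 2}=\big\langle \mathcal L^{n}\otimes K_{\mathcal X/\mathcal B}^{-1},\,\mathcal L,\ldots,\mathcal L\big\rangle,\]
the second being valid when $\mathcal X$ and $\mathcal B$ are smooth so that $K_{\mathcal X/\mathcal B}$ is a genuine line bundle. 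Both are exactly the content of \cite{elkik} (and in the analytic formulation of \cite{phong-ross-sturm}); I would cite them rather than reprove the Grothendieck--Riemann--Roch computation behind them.

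Given these, (i) and (ii) become formal. For (i) I would apply the first identification with $\mathcal L$ replaced by $\mathcal L^r$, giving $\lambda_{n+1}(\mathcal L^r)=\langle \mathcal L^r,\ldots,\mathcal L^r\rangle$; multilinearity in each of the $n+1$ slots then pulls out a factor $r$ from every slot, yielding $\langle \mathcal L,\ldots,\mathcal L\rangle^{\otimes r^{n+1}}=\lambda_{n+1}^{\otimes r^{n+1}}$. (Note the intermediate exponent should read $r^{n+1}$, consistently with the final term, rather than $r(n+1)$.) For (ii) I would substitute $\mathcal L\mapsto \mathcal L^r$ into the second identification and simplify $(\mathcal L^r)^{\otimes n}=\mathcal L^{nr}$ in the first slot, leaving the relative canonical bundle untouched since it does not depend on the polarisation; this reproduces exactly $\lambda_n(\mathcal L^r)^{\otimes 2}=\langle \mathcal L^{nr}\otimes K_{\mathcal X/\mathcal B}^{-1},\mathcal L^r,\ldots,\mathcal L^r\rangle$.

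Part (iii) is the compatibility of Deligne pairings with proper base change together with multilinearity, both established in \cite{elkik} and \cite{Zhang-Knudsen-Mumford-pairing}. Here I would invoke the construction of the pairing via norms of sections (equivalently, its characterisation by a universal property), under which a Cartesian pullback along $\gamma\colon \mathcal B'\to\mathcal B$ commutes with the formation of $\langle\,\cdot\,\rangle$; the projection formula for the fibre product $\mathcal X\times_{\mathcal B}\mathcal B'$ then gives $\langle \gamma^*\mathcal L,\ldots,\gamma^*\mathcal L\rangle=\gamma^*\langle \mathcal L,\ldots,\mathcal L\rangle$, while multilinearity is built into the definition.

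The only genuinely hard part is the input I am citing, i.e.\ establishing the two Deligne--Riemann--Roch identifications; once those are granted, everything reduces to bookkeeping with the multilinear and base-change properties of the pairing. I would therefore present the lemma essentially as an assembly of \cite{elkik, phong-ross-sturm, Zhang-Knudsen-Mumford-pairing}, spelling out only the substitutions $\mathcal L\mapsto \mathcal L^r$ and the extraction of the exponent $r^{n+1}$.
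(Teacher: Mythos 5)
Your proposal is correct and matches the paper's treatment: the paper gives no proof of this lemma at all, simply citing \cite{Zhang-Knudsen-Mumford-pairing,phong-ross-sturm} and \cite[I.3.1]{elkik}, which is exactly the assembly of references plus formal multilinearity/base-change bookkeeping you describe. Your observation that the intermediate exponent in (i) should read $r^{n+1}$ rather than $r(n+1)$ is also correct; it is a typo in the statement, as the final term $\lambda_{n+1}^{\otimes r^{n+1}}$ and the use of $\lambda_{n+1}(\mathcal L^r)=\lambda_{n+1}^{\otimes r^{n+1}}$ in the subsequent lemma confirm.
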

	
	Since $\pi$ is flat, the Hilbert polynomial is constant along fibres $b\in \mathcal B$. Let $p(k)$ and $\widetilde p(k)$ be the Hilbert polynomials of $\mathcal L_b$ and $\mathcal L_b|_{\mathcal D}$ on fibres $\mathcal X_b$ and $\mathcal D_b$, respectively. For $k$ sufficiently large, we have
	$$p(k)=a_0k^n+a_1k^{n-1}+\cdots,\qquad \widetilde p(k)=\widetilde a_0 k^{n-1}+\widetilde a_{1}k^{n-2}+\cdots.$$
	If the general fibre $\mathcal X_b$ has mild singularities (e.g. if $\mathcal X_b$ is $\mathbb Q$-factorial), then we can write the coefficients of the Hilbert polynomials in terms of first chern classes:
	\begin{align}
	a_0&=\frac{c_1(\mathcal L_b)^n}{n!}, \label{eq:Hilbert-coeff-a0}\\
	a_1&=\frac{c_1(\mathcal X_b)\cdot c_1(\mathcal L_b)^{n-1}}{2(n-1)!},\label{eq:Hilbert-coeff-a1}\\
	\widetilde a_0&=\frac{c_1(\mathcal L|_{\mathcal D_b})^{n-1}}{(n-1)!}=\frac{c_1(\mathcal L)^{n-1}\cdot \mathcal D_b}{(n-1)!}. \label{eq:Hilbert-coeff-a0-tilde}
	\end{align}
	\begin{dfn}
		Given the tuple $(\mathcal X, \mathcal D, \mathcal B, \mathcal L^r)$ as above, we define its \emph{log CM $\mathbb Q$-line bundle with angle $\beta\in \mathbb Q_{>0}$} on $\mathcal B$ as 
		$$\Lambda_{CM, \beta}(\mathcal X, \mathcal D, \mathcal L)=\lambda_{n+1}^{\otimes\left(n(n+1)+\frac{2a_1-(1-\beta)\widetilde a_0}{a_0}\right)}\otimes\lambda_n^{\otimes(-2(n+1))}\otimes\widetilde \lambda_n^{\otimes (1-\beta)(n+1)}.$$
		We will write either $\Lambda_{CM, \beta}(\mathcal L)$ or $\Lambda_{CM, \beta}$ for $\Lambda_{CM, \beta}(\mathcal X, \mathcal D, \mathcal L)$ whenever there is no confusion on $(\mathcal X, \mathcal D)$ or $\mathcal L$, respectively. 
	\end{dfn}
	When $\beta=1$, the latter definition recovers the definition of CM line bundle for varieties in \cite{paul-tian-cm1, paul-tian-cm2}, c.f. \cite{Li-Wang-Xu-compact-moduli}.
	\begin{lemma}Let $\mathcal X$ and $\mathcal B$ be smooth and assume the general fibre of $\mathcal X_b$ is $\mathbb Q$-factorial. Then $\Lambda_{CM,\beta}(\mathcal L^r)=\big(\Lambda_{CM, \beta}(\mathcal L)\big)^{\otimes r^n}$ for all $r>0$.
	\end{lemma}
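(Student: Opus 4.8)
The plan is to compute how each ingredient of $\Lambda_{CM,\beta}$ rescales under the substitution $\mathcal{L}\rightsquigarrow\mathcal{L}^r$ and then to reassemble the definition, collecting the exponent of each Knudsen--Mumford line bundle separately. The three line bundles $\lambda_{n+1}$, $\lambda_n$, $\widetilde\lambda_n$ and the three Hilbert coefficients $a_0,a_1,\widetilde a_0$ all enter the definition, so I would treat them one at a time.

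First I would handle the Hilbert coefficients, which is the elementary input. Since $\chi(\mathcal{X}_b,(\mathcal{L}^r)^k_b)=\chi(\mathcal{X}_b,\mathcal{L}_b^{rk})=p(rk)$, the Hilbert polynomial attached to $\mathcal{L}^r$ is
\[
p(rk)=a_0 r^n k^n + a_1 r^{n-1}k^{n-1}+\cdots,
\]
so the coefficients rescale as $a_0\mapsto a_0 r^n$, $a_1\mapsto a_1 r^{n-1}$, and likewise $\widetilde a_0\mapsto \widetilde a_0 r^{n-1}$ on $\mathcal{D}$. Writing $c\coloneqq\frac{2a_1-(1-\beta)\widetilde a_0}{a_0}$, the rational exponent of $\lambda_{n+1}$ in the definition therefore transforms as $n(n+1)+c\mapsto n(n+1)+\frac{c}{r}$, the factor $1/r$ being the crucial feature.

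Next I would record the scalings of the line bundles using Lemma~\ref{lemma:deligne}. Part~(i) gives $\lambda_{n+1}(\mathcal{L}^r)=\lambda_{n+1}^{\otimes r^{n+1}}$, and the same computation on $\mathcal{D}$ (whose top Deligne pairing has $n$ entries, as $\mathcal{D}$ has relative dimension $n-1$) gives $\widetilde\lambda_n(\mathcal{L}^r)=\widetilde\lambda_n^{\otimes r^n}$. The only delicate term is $\lambda_n$: invoking smoothness and part~(ii), I would expand
\[
(\lambda_n(\mathcal{L}^r))^{\otimes 2}=\big\langle \mathcal{L}^{nr}\otimes K_{\mathcal{X}/\mathcal{B}}^{-1},\,\mathcal{L}^r,\ldots,\mathcal{L}^r\big\rangle
\]
by the multilinearity in part~(iii), splitting off the canonical factor and using $\langle K_{\mathcal{X}/\mathcal{B}}^{-1},\mathcal{L},\ldots,\mathcal{L}\rangle=\lambda_n^{\otimes 2}\otimes\lambda_{n+1}^{\otimes(-n)}$ (read off from part~(ii) at $r=1$). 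This yields
\[
\lambda_n(\mathcal{L}^r)=\lambda_{n+1}^{\otimes\frac{n r^n(r-1)}{2}}\otimes\lambda_n^{\otimes r^n}.
\]

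Finally I would substitute these scalings into the definition of $\Lambda_{CM,\beta}(\mathcal{L}^r)$ and collect exponents. The $\lambda_n$- and $\widetilde\lambda_n$-exponents are immediately $r^n$ times the original ones. For $\lambda_{n+1}$ the exponent is the sum $r^{n+1}\big(n(n+1)+\frac{c}{r}\big)-2(n+1)\cdot\frac{n r^n(r-1)}{2}$, and the point is that the two $r^{n+1}$-terms $n(n+1)r^{n+1}$ and $-n(n+1)r^{n+1}$ cancel, leaving exactly $r^n(n(n+1)+c)$, again $r^n$ times the original exponent. Hence $\Lambda_{CM,\beta}(\mathcal{L}^r)=(\Lambda_{CM,\beta}(\mathcal{L}))^{\otimes r^n}$. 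I do not anticipate a genuine obstacle here; the only real content is the bookkeeping in the $\lambda_n$-term, where the apparent mismatch between the $r^{n+1}$ scaling coming from the Deligne pairing and the $r^n$ scaling coming from the linear appearance of $K_{\mathcal{X}/\mathcal{B}}$ is reconciled precisely by the $1/r$ correction in the $\lambda_{n+1}$-exponent produced by the Hilbert coefficients.
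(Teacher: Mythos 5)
Your proposal is correct and follows essentially the same route as the paper: scale the Hilbert coefficients, use Lemma~\ref{lemma:deligne}~(i) for $\lambda_{n+1}$ and $\widetilde\lambda_n$, use (ii)--(iii) to get $(\lambda_n(\mathcal L^r))^{\otimes 2}=\lambda_{n+1}^{\otimes nr^n(r-1)}\otimes\lambda_n^{\otimes 2r^n}$, and observe the cancellation of the $r^{n+1}$-terms in the $\lambda_{n+1}$-exponent against the $1/r$ correction from $a_1(r)/a_0(r)$. The only cosmetic difference is that the paper keeps $\mathcal L^{n}\otimes K_{\mathcal X/\mathcal B}^{-1}$ bundled in one slot rather than splitting off $K_{\mathcal X/\mathcal B}^{-1}$ alone, but this yields the identical formula for $\lambda_n(\mathcal L^r)$.
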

	\begin{proof}
		From Lemma \ref{lemma:deligne} (i) we have that $\lambda_{n+1}(\mathcal L^r)=\lambda_{n+1}^{\otimes r^{n+1}}$ and $\widetilde\lambda_{n}(\mathcal L^r)=\lambda_{n}^{\otimes r^{n}}$.
		
		Moreover, Lemma \ref{lemma:deligne} (ii) gives
		\begin{align*}
		(\lambda_n(\mathcal L^r))^{\otimes 2}&=\langle \mathcal L^{nr}\otimes K_{\mathcal X}^{-1}, \mathcal L^r,\ldots, \mathcal L^r\rangle\\
		&=\left[\langle \mathcal L^{nr-n}\otimes \mathcal L^n\otimes K_{\mathcal X}^{-1}, \mathcal L, \ldots, \mathcal L\rangle\right]^{\otimes r^n}\\
		&=\left[\langle \mathcal L^{nr-n}, \mathcal L, \ldots, \mathcal L\rangle\otimes \langle \mathcal L^n\otimes K_{\mathcal X}^{-1}, \mathcal L, \ldots, \mathcal L\rangle\right]^{\otimes r^n}\\
		&=\left[\langle \mathcal L, \ldots, \mathcal L\rangle^{\otimes n(r-1)}\otimes\langle \mathcal L^n\otimes K_{\mathcal X}^{-1}, \mathcal L, \ldots, \mathcal L\rangle\right]^{\otimes r^n}\\
		&=\lambda_{n+1}^{\otimes r^n n (r-1)}\otimes \lambda_n^{\otimes 2r^n}.
		\end{align*}
		Let $a_i(r)$ and $\widetilde a_i(r)$ be the coefficients of the Hilbert polynomials of $\mathcal X$ and $\mathcal D$, respectively, for a power $\mathcal L^r$ of $\mathcal L$. By restricting to a general fibre $b$ of $\mathcal X$ we have
		\begin{align*}
		a_0(r)&=\frac{c_1(\mathcal L^r|_b)^n}{n!}=r^n\frac{c_1(\mathcal L|_b)^n}{n!}=r^n a_0,\\
		\widetilde a_0(r)&=\frac{c_1(\mathcal L^r_{\mathcal D}|_b)^{n-1}}{(n-1)!}=r^{n-1}\frac{c_1(\mathcal L_{\mathcal D}|_b)^{n-1}}{(n-1)!}=r^{n-1} \widetilde a_0,\\ 
		a_1(r)&=\frac{c_1(\mathcal X|_b)\cdot c_1(\mathcal L^r|_b)^{n-1}}{2(n-1)!}=r^{n-1}a_1.
		\end{align*}
		Hence
		\begin{align*}
		\lambda_{n+1}(\mathcal L^r)^{\otimes\left(n(n+1)+\frac{2a_1(r)-(1-\beta)\widetilde a_0(r)}{a_0(r)}\right)}&=(\lambda_{n+1}^{\otimes r^{n+1}})^{\otimes\left(n(n+1)+r^{-1}\frac{2a_1-(1-\beta)\widetilde a_0}{a_0}\right)}\\
		&=\left(\lambda_{n+1}^{\frac{2a_1-(1-\beta)\widetilde a_0}{a_0}}\otimes \lambda_{n+1}^{\otimes rn(n+1)}\right)^{\otimes r^n}.
		\end{align*}
		The latter identity, together with $\big(\lambda_n(\mathcal L^r)\big)^{\otimes 2}=\lambda_{n+1}^{\otimes r^n n (r-1)}\otimes \lambda_n^{\otimes 2r^n}$ completes the proof:
		\begin{align*}
		\Lambda_{CM,\beta}(\mathcal L^r)&=\left(\lambda_{n+1}(\mathcal L^r)\right)^{\otimes n(n+1)+\frac{2a_1(r)-(1-\beta)\widetilde a_0(r)}{a_0(r)}}\\
		&\qquad\	\otimes\left(\lambda_n(\mathcal L^r)\right)^{\otimes(-2(n+1))}\otimes \left(\widetilde \lambda_n(\mathcal L^r)\right)^{\otimes(1-\beta)(n+1)}\\
		&=\left(\lambda_{n+1}^{\otimes\frac{2a_1-(1-\beta)\widetilde a_0}{a_0}}\otimes \lambda_{n+1}^{\otimes rn(n+1)}\right)^{\otimes r^n}\\
		&\quad\ \otimes\left(\lambda_{n+1}^{\otimes(-(n+1)n(r-1))}\right)^{\otimes r^n}\otimes \left(\lambda_n^{\otimes(-2(n+1))}\right)^{\otimes r^n}\otimes\left(\widetilde \lambda_n^{\otimes(1-\beta)(n+1)}\right)^{\otimes r^n}\\
		&=\big(\Lambda_{CM, \beta}\left(\mathcal L\right)\big)^{\otimes r^n}.
		\end{align*}
	\end{proof}
	
	Hence, the log CM $\mathbb Q$-line bundle is unique up to scaling. In particular, if it is ample, it induces the same polarisation of $\mathcal B$ for all $r$ sufficiently large. Therefore we will simply refer to the \emph{log CM line bundle} $\Lambda_{CM, \beta}$.
	
	\begin{dfn}\label{def:test-configuration}
		Let $(X,D)$ be a log pair where $X$ is a projective variety and $D$ is a divisor of $X$. Let $L$ be an ample $\mathbb Q$-line bundle of $X$.
		
		A \emph{test configuration} of $(X,D, L)$ is a tuple  $(\mathcal X, \mathcal D, \mathcal L)$ where $\mathcal D$ is a Weil $\mathbb Q$-divisor of the projective variety $\mathcal X$ and a flat proper morphism $\pi\colon\mathcal X \rightarrow \mathbb C$ such that
		\begin{enumerate}[(i)]
			\item the morphism $\pi$ induces a flat proper morphism $\pi_{\mathcal D}\colon\mathcal D\rightarrow \mathbb C$,
			\item the general fibres of $\pi$ and $\pi_{\mathcal D}$ are isomorphic to $X$ and $D$, respectively,
			\item there is a $\pi$-equivariant $\mathbb C^*$-action on $(\mathcal X, \mathcal L)$ which preserves $\mathcal D$. In particular, all the fibres of $\pi$ and $\pi_{\mathcal D}$ at $s\neq 0\in \mathbb C$ are isomorphic to $X$ and $D$, respectively.
		\end{enumerate}
	\end{dfn}
	
	Let $(\mathcal X, \mathcal D, \mathcal L)$ be a test configuration of $(X,D,L)$. Let $\mathcal X_0$ and $\mathcal D_0$ be the central fibres of $\mathcal X$ and $\mathcal D$. For $k$ sufficiently large, we can write 
	\begin{align*}w(k)=b_0k^{n+1}+b_1k^n+\cdots, \qquad\widetilde w(k)=\widetilde b_0k^n+\cdots,
	\end{align*}
	for the sum of the weights of the action of $\mathbb C^*$ on $\Lambda^{n+1}(H^0(\mathcal X_0, \mathcal L|_{\mathcal X_0}^{\otimes k}))$ and\linebreak $\Lambda^{n}(H^0(\mathcal D_0, \mathcal L|_{\mathcal D_0}^{\otimes k}))$, respectively. Since the Hilbert polynomial is invariant along fibres of flat deformations, \eqref{eq:Hilbert-coeff-a0} \eqref{eq:Hilbert-coeff-a1}, \eqref{eq:Hilbert-coeff-a0-tilde} define its coefficients of the initial terms of the Hilbert polynomials of $X$ and $D$. Let $\beta\in (0,1]\cap \mathbb Q$. The \emph{$\beta$-Donaldson-Futaki invariant} of $(\mathcal X, \mathcal D, \mathcal L)$ is
	$$\mathrm{DF}_\beta(\mathcal X, \mathcal D, \mathcal L)=\frac{2(a_1b_0-a_0b_1)}{a_0}+(1-\beta)\frac{\widetilde b_0a_0-\widetilde a_0b_0}{a_0}.$$
	
	\begin{dfn}
		\label{dfn:Kstability}
		An $L$-polarised pair $(X,(1-\beta)D)$ is \emph{K-semistable} if and only if $\mathrm{DF}_\beta(\mathcal X, \mathcal D, \mathcal L)\geqslant 0$ for all test configurations $(\mathcal X, \mathcal D, \mathcal L)$ of $(X,D,L)$.
		
		An $L$-polarised pair $(X,(1-\beta)D)$ is \emph{K-stable} (respectively \emph{K-polystable}) if and only if $\mathrm{DF}_\beta(\mathcal X, \mathcal D, \mathcal L)>0$ for all test configurations $(\mathcal X, \mathcal D, \mathcal L)$ which are not isomorphic to the trivial test configuration $(X\times \mathbb C, D\times \mathbb C, \pi_1^*(L))$, (respectively equivariantly isomorphic to the trivial test configuration), where $\pi_1$ is projection on the first factor.
	\end{dfn}

	\begin{theorem}
		\label{theorem:weight-DF}
		Let $\mathcal B=\mathbb C$ and suppose that $(\mathcal X, \mathcal D, \mathcal B, \mathcal L)$ is a test configuration of an $L$-polarised pair $(X, D)$. Then
		$$w(\Lambda_{CM, \beta}(\mathcal X, \mathcal D, \mathcal L^r))=(n+1)!\mathrm{DF}_\beta(\mathcal X, \mathcal D, \mathcal L),$$
		where $w(\Lambda_{CM, \beta}(\mathcal X, \mathcal D, \mathcal L^r))$ is the total weight of $\Lambda_{CM, \beta}(\mathcal X, \mathcal D, \mathcal L^r)$ under the $\mathbb C^*$-action of the test configuration.
	\end{theorem}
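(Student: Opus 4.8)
The plan is to reduce the identity to a bookkeeping computation of $\mathbb C^*$-weights, using the Knudsen--Mumford expansions as the bridge between the functorial line bundles $\lambda_{n+1},\lambda_n,\widetilde\lambda_n$ and the weight data $w(k),\widetilde w(k)$ entering $\mathrm{DF}_\beta$. The starting point is the observation that, over $\mathcal B=\mathbb C$ with the $\mathbb C^*$-action of the test configuration, every line bundle appearing in the two expansions is $\mathbb C^*$-equivariant, and the weight of $\det(\pi !_*(\mathcal L^k))$ (the weight of the action on its fibre over the fixed point $0$) is precisely the total weight $w(k)$ of $H^0(\mathcal X_0,\mathcal L|_{\mathcal X_0}^{\otimes k})$; likewise the weight of $\det(\pi !_*((\mathcal L|_{\mathcal D})^k))$ equals $\widetilde w(k)$. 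Granting this, taking weights of both Knudsen--Mumford expansions turns the tensor decompositions into polynomial identities in $k$.

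First I would match coefficients. Writing $\binom{k}{n+1}=\tfrac{1}{(n+1)!}\bigl(k^{n+1}-\tfrac{n(n+1)}{2}k^{n}+\cdots\bigr)$ and $\binom{k}{n}=\tfrac{1}{n!}\bigl(k^{n}+\cdots\bigr)$, and using additivity of the weight under $\otimes$, the coefficients of $k^{n+1}$ and $k^n$ in the weight of $\det(\pi !_*(\mathcal L^k))$ must reproduce $b_0$ and $b_1$, while the coefficient of $k^n$ in the weight of $\det(\pi !_*((\mathcal L|_{\mathcal D})^k))$ reproduces $\widetilde b_0$. Solving these linear relations gives
\begin{align*}
w(\lambda_{n+1})&=(n+1)!\,b_0, & w(\lambda_n)&=n!\Bigl(b_1+\tfrac{n(n+1)}{2}\,b_0\Bigr), & w(\widetilde\lambda_n)&=n!\,\widetilde b_0.
\end{align*}

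Next I would substitute these into the weight of the log CM line bundle, which by additivity of weights and the definition of $\Lambda_{CM,\beta}$ reads (for $r=1$)
\begin{align*}
w(\Lambda_{CM,\beta})&=\Bigl(n(n+1)+\tfrac{2a_1-(1-\beta)\widetilde a_0}{a_0}\Bigr)w(\lambda_{n+1})\\
&\qquad -2(n+1)\,w(\lambda_n)+(1-\beta)(n+1)\,w(\widetilde\lambda_n).
\end{align*}
The decisive point is a cancellation: the term $n(n+1)\,w(\lambda_{n+1})=n(n+1)(n+1)!\,b_0$ is annihilated exactly by the $\tfrac{n(n+1)}{2}b_0$ correction inside $w(\lambda_n)$, weighted by the factor $-2(n+1)$. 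What survives regroups into
\begin{align*}
w(\Lambda_{CM,\beta})=(n+1)!\left[\frac{2(a_1b_0-a_0b_1)}{a_0}+(1-\beta)\frac{a_0\widetilde b_0-\widetilde a_0b_0}{a_0}\right]=(n+1)!\,\mathrm{DF}_\beta.
\end{align*}
The formula for $r=1$ is the content of the statement; the passage to general $r$ is then immediate, since by the preceding lemma $\Lambda_{CM,\beta}(\mathcal L^r)=\Lambda_{CM,\beta}(\mathcal L)^{\otimes r^n}$, so by additivity both sides simply scale by $r^n$, consistently with $\mathrm{DF}_\beta(\mathcal L^r)=r^n\mathrm{DF}_\beta(\mathcal L)$.

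The step I expect to be the main obstacle is the first one: justifying that the $\mathbb C^*$-weight of the functorial bundle $\det(\pi !_*(\mathcal L^k))$ really equals the total weight $w(k)$ of the central fibre. This rests on compatibility of $\pi !_*$ with base change to the origin and on Serre vanishing, which for $k\gg 0$ forces $\pi !_*(\mathcal L^k)=\pi_*(\mathcal L^k)$ with equivariant fibre $H^0(\mathcal X_0,\mathcal L|_{\mathcal X_0}^{\otimes k})$, so that the weight on its determinant is the sum of the weights, i.e.\ $w(k)$; the same reasoning applies on $\mathcal D$. Once this identification is secured, everything else is the elementary coefficient matching and the cancellation displayed above.
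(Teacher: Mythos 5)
Your proposal is correct and follows essentially the same route as the paper's proof: extract $w(\lambda_{n+1})$, $w(\lambda_n)$, $w(\widetilde\lambda_n)$ by matching the $k^{n+1}$ and $k^n$ coefficients of the weight of the Knudsen--Mumford expansions against $b_0,b_1,\widetilde b_0$, then substitute into the definition of $\Lambda_{CM,\beta}$ and observe the cancellation of the $n(n+1)w(\lambda_{n+1})$ term. Your closing remark justifying that the weight of $\det(\pi !_*(\mathcal L^k))$ equals $w(k)$ via base change and Serre vanishing, and your note on the $r$-scaling, are points the paper leaves implicit, but they do not change the argument.
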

	\begin{proof}
		Denote by $w(\mathcal G)$ the total weight of the $\mathbb C^*$-action on any $\mathbb C^*$-linearised $\mathbb Q$-line bundle $\mathcal G$. Observe that $\mathcal L$ is $\mathbb C^*$-linearised since $(\mathcal X, \mathcal D, \mathcal L)$ is a test configuration. Then:
		$$w(\det\left(\pi !_*\left(\mathcal L^k\right)\right))=b_0k^{n+1}+ b_1k^n+\ldots.$$
		On the other hand
		\begin{align*}
		w\left(\det\left(\pi !_*\left(\mathcal L^k\right)\right)\right)&=w\left(\lambda_{n+1}^{\otimes{\binom{k}{n+1}}}\otimes\lambda_n^{\otimes{\binom{k} {n}}}\otimes\cdots\right)\\
		&={\binom{k}{n+1}}w(\lambda_{n+1})+{\binom{k}{n}}w(\lambda_n)+\ldots,
		\end{align*}
		and since
		\begin{align*}
		{\binom{k}{n+1}} &= \frac{k^{n+1}}{(n+1)!} - \frac{n(n+1)}{2}\frac{1}{(n+1)!}k^n+\cdots\\
		{\binom{k}{n}} &= \frac{k^{n}}{n!} - \frac{n(n-1)}{2}\frac{1}{n!}k^{n-1}+\cdots,
		\end{align*}
		we have that $b_0=\frac{w(\lambda_{n+1})}{(n+1)!}$. Similarly $\widetilde b_0=\frac{w(\widetilde \lambda_n)}{n!}$. On the other hand, by looking at the coefficients of the $k^n$ terms we get that
		$$b_1=\frac{-n(n+1)}{2}\frac{1}{(n+1)!}w(\lambda_{n+1})+\frac{1}{n!}w(\lambda_n)=\frac{-n(n+1)}{2}b_0+\frac{n+1}{(n+1)!}w(\lambda_n).$$
		Rearranging terms we obtain the weights of each relevant line bundle in the Mumford-Knudsen expansions:
		$$w(\lambda_n)=(n+1)!\left(\frac{b_1}{n+1}+\frac{n}{2}b_0\right),\qquad w(\lambda_{n+1})=(n+1)!b_0,\qquad w(\widetilde \lambda_{n})=n!b_0.$$
		We can now compute the weight of the log CM line bundle, substituting the values for the weights:
		\begin{align*}
		w\left(\Lambda_{CM, \beta}\right)&=\left(n(n+1)+\frac{2a_1-(1-\beta)\widetilde a_0}{a_0}\right)\,w\left(\lambda_{n+1}\right)\\
		&\qquad\qquad-2(n+1)w\left(\lambda_n\right)+\left(1-\beta\right)\left(n+1\right)w(\widetilde \lambda_n)\\
		&=\left(n(n+1)+\frac{2a_1-(1-\beta)\widetilde a_0}{a_0}\right)\,(n+1)!b_0\\
		&\qquad\qquad -2(n+1)(n+1)!\left(\frac{b_1}{n+1}+\frac{n}{2}b_0\right)\\
		&\qquad\qquad +\left(1-\beta\right)\left(n+1\right)n!\widetilde b_0\\
		&=(n+1)!\left(\left(\frac{2a_1b_0-2b_1a_0}{a_0}\right)+(1-\beta)\left(\frac{\widetilde b_0a_0-\widetilde a_0b_0}{a_0}\right)\right)\\
		&=(n+1)!\mathrm{DF}_{\beta}(\mathcal X, \mathcal D, \mathcal L).
		\end{align*}
	\end{proof}

	\begin{theorem}
		\label{theorem:CM-chern-class} Let $(X,D,L)$ be the restriction of a family $(\mathcal X, \mathcal D, \mathcal L)$ where Grothendieck-Riemann-Roch applies  (e.g. if the fibres have mild singularities, for instance if they are locally complete intersections) to a general $b\in \mathcal B$ and let $\mu(L)=\frac{c_1(X)\cdot c_1(L)}{c_1(L)^n}$ and $\mu(L,D)=\frac{D\cdot c_1(L)}{c_1(L)^n}$. Assume that $X$ is $\mathbb Q$-factorial. Then
		\begin{align*}
		\deg(\Lambda_{CM,\beta})=\pi_*&\left(n\mu\left(L\right)  c_1\left(\mathcal L\right)^{n+1}+\left(n+1\right)c_1\left(\mathcal L\right)^nc_1\left(K_{\mathcal X / \mathcal B}\right)+ \right. \\
		&\left.\left(1-\beta\right)\left(\left(n+1\right)c_1\left(\mathcal L\right)^n\cdot \mathcal D - n\mu(L,D)c_1\left(\mathcal L\right)^{n+1}\right)\right).
		\end{align*}
		
		Moreover, if $\mathcal L=-K_{\mathcal X/\mathcal B}$ and $\mathcal D|_{\mathcal X_b}\in |-K_{\mathcal X_b}|$ for all $b\in \mathcal B$, then
		\begin{align*}
		\deg(&\Lambda_{CM,\beta})=\pi_*\left(c_1\left(-K_{\mathcal X/\mathcal B}\right)^n\cdot \left(-c_1\left(-K_{\mathcal X/\mathcal B}\right)+\left(1-\beta\right)\left(\left(n+1\right)\mathcal D-nc_1\left(-K_{\mathcal X/\mathcal B}\right)\right)\right)\right).
		\end{align*}
	\end{theorem}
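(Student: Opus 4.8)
The plan is to compute the first Chern class of each of the three line bundles occurring in the definition of $\Lambda_{CM,\beta}$ and then combine them linearly, exploiting the fundamental relation between Deligne pairings and integration along the fibres. The key input is that for line bundles $\mathcal M_0,\dots,\mathcal M_n$ on $\mathcal X$ the Deligne pairing satisfies $c_1(\langle \mathcal M_0,\dots,\mathcal M_n\rangle)=\pi_*\big(c_1(\mathcal M_0)\cdots c_1(\mathcal M_n)\big)$, with the analogous statement on $\mathcal D$ using $n$ entries. This is exactly where the Grothendieck--Riemann--Roch hypothesis enters, guaranteeing that the formula persists when the fibres carry only mild (e.g. locally complete intersection) singularities, beyond the smooth situation of Lemma \ref{lemma:deligne}(ii).

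First I would record the three Chern classes. From Lemma \ref{lemma:deligne}(i), $\lambda_{n+1}=\langle\mathcal L,\dots,\mathcal L\rangle$, so $c_1(\lambda_{n+1})=\pi_*\big(c_1(\mathcal L)^{n+1}\big)$. From Lemma \ref{lemma:deligne}(ii), $\lambda_n^{\otimes 2}=\langle \mathcal L^n\otimes K_{\mathcal X/\mathcal B}^{-1},\mathcal L,\dots,\mathcal L\rangle$, whence $2c_1(\lambda_n)=\pi_*\big((n\,c_1(\mathcal L)-c_1(K_{\mathcal X/\mathcal B}))\,c_1(\mathcal L)^n\big)$. Finally $\widetilde\lambda_n$ is the top Deligne pairing of $\mathcal L|_{\mathcal D}$ along $\mathcal D\to\mathcal B$ (which has relative dimension $n-1$, hence $n$ entries); applying the projection formula to the inclusion $\mathcal D\hookrightarrow\mathcal X$ converts its Chern class into an integral over $\mathcal X$, giving $c_1(\widetilde\lambda_n)=\pi_*\big(c_1(\mathcal L)^n\cdot\mathcal D\big)$.

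Next I would substitute these into $\Lambda_{CM,\beta}=\lambda_{n+1}^{\otimes(n(n+1)+\frac{2a_1-(1-\beta)\widetilde a_0}{a_0})}\otimes\lambda_n^{\otimes(-2(n+1))}\otimes\widetilde\lambda_n^{\otimes(1-\beta)(n+1)}$ and pass to first Chern classes additively. The crucial simplification is that the contribution $n(n+1)\,\pi_*\big(c_1(\mathcal L)^{n+1}\big)$ coming from the exponent of $\lambda_{n+1}$ cancels exactly against the $-n(n+1)\,\pi_*\big(c_1(\mathcal L)^{n+1}\big)$ coming from $-2(n+1)c_1(\lambda_n)$. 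To identify the surviving coefficients I would restrict \eqref{eq:Hilbert-coeff-a0}--\eqref{eq:Hilbert-coeff-a0-tilde} to a general fibre, obtaining $\frac{2a_1}{a_0}=n\mu(L)$ and $\frac{\widetilde a_0}{a_0}=n\mu(L,D)$; inserting these and regrouping the $(1-\beta)$ terms produces the first displayed formula.

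For the moreover part I would specialise to $\mathcal L=-K_{\mathcal X/\mathcal B}$, so that $c_1(K_{\mathcal X/\mathcal B})=-c_1(\mathcal L)$, and use $\mathcal D|_{\mathcal X_b}\in|-K_{\mathcal X_b}|$: on a general fibre $L=-K_X$ and $D\sim -K_X$ force $\mu(L)=\mu(L,D)=1$. Feeding these three facts into the first formula and factoring out $c_1(-K_{\mathcal X/\mathcal B})^n$ yields the stated expression. The only genuine obstacle is the opening step, namely justifying the degree formula for $\lambda_n$ in terms of the relative dualizing class once the fibres are allowed to be singular, which is precisely what the GRR/mild-singularity hypothesis secures; everything afterwards is linear bookkeeping together with the single cancellation noted above.
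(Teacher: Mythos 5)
Your proposal is correct and follows essentially the same route as the paper: the paper likewise reduces everything to the three degree identities $\deg(\lambda_{n+1})=\pi_*(c_1(\mathcal L)^{n+1})$, $n\deg(\lambda_{n+1})-2\deg(\lambda_n)=\pi_*(c_1(\mathcal L)^n\cdot c_1(K_{\mathcal X/\mathcal B}))$ and $\deg(\widetilde\lambda_n)=\pi_*(c_1(\mathcal L|_{\mathcal D})^n)$ (obtained there directly from Grothendieck--Riemann--Roch rather than phrased through Deligne pairings), and then performs the same substitution, the same cancellation of the $n(n+1)$ terms, and the same identifications $\frac{2a_1}{a_0}=n\mu(L)$, $\frac{\widetilde a_0}{a_0}=n\mu(L,D)$ before specialising to $\mathcal L=-K_{\mathcal X/\mathcal B}$.
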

	\begin{proof}
		From applying the Grothendieck-Riemann-Roch Theorem to $\mathcal L^k$ and $(\mathcal L|_{\mathcal D})^k$ we obtain (c.f. \cite[\S2]{fine2006note}):
		\begin{align*}
		\pi_*(c_1(\mathcal L)^{n+1})&=\deg(\lambda_{n+1})\\
		n\deg(\lambda_{n+1})-2\deg(\lambda_n)&=\pi_*\left(c_1\left(\mathcal L\right)^n \cdot c_1\left(K_{\mathcal X/\mathcal B}\right)\right)\\
		\pi_*\left(c_1\left(\mathcal L|_{\mathcal D}\right)^n\right)&=\deg\left(\widetilde \lambda_n\right).
		\end{align*}
		Hence, we have:
		\begin{align*}
		c_1\left(\Lambda_{CM,\beta}\right)=&\left(n\left(n+1\right)+\frac{2a_1-(1-\beta)\widetilde a_0}{a_0}\right)\pi_*\left(c_1\left(\mathcal L\right)^{n+1}\right)\\
		&\qquad-\left(n+1\right)\pi_*\left(nc_1\left(\mathcal L\right)^{n+1}-c_1\left(\mathcal L\right)^nc\cdot _1\left(K_{\mathcal X/\mathcal B}\right)\right)\\
		&\qquad+(1-\beta)(n+1)\left(\left(\pi|_{\mathcal D}\right)_*\left(c_1\left(\mathcal L|_{\mathcal D}\right)^n\right)\right)\\
		&=\pi_*\left(\left(2\frac{a_1}{a_0}-(1-\beta)\frac{\widetilde a_0}{a_0}\right)c_1\left(\mathcal L\right)^{n+1}+\right. \\
		&\qquad\left(n+1\right)c_1\left(\mathcal L\right)^n\cdot c_1\left(K_{\mathcal X/\mathcal B}\right) + \left(1-\beta\right)(n+1)c_1\left(\mathcal L\right)^n\cdot\mathcal D\bigg)\\
		&=\pi_*\left(2\frac{a_1}{a_0} c_1\left(\mathcal L\right)^{n+1}+\left(n+1\right)c_1\left(\mathcal L\right)^n\cdot c_1\left(K_{\mathcal X/\mathcal B}\right) \right.\\
		&\qquad+\left.\left(1-\beta\right)\left(\left(n+1\right)c_1\left(\mathcal L\right)^n\cdot\mathcal D-\frac{\widetilde a_0}{a_0}c_1\left(\mathcal L\right)^{n+1}\right)\right).
		\end{align*}
		
		We obtain the identities $\frac{2a_1}{a_0}=n\mu(L)$ and $\frac{\widetilde a_0}{a_0}=n\mu(L,D)$ from \eqref{eq:Hilbert-coeff-a0} \eqref{eq:Hilbert-coeff-a1} and \eqref{eq:Hilbert-coeff-a0-tilde}. Substituting in the above identity gives us the first formula. For the second formula note that if $\mathcal L=-K_{\mathcal X/\mathcal B}$, and $\mathcal D|_{\mathcal X_b}\in |-K_{\mathcal X_b}|$, then $\mu(L)=\mu(L,D)=1$ and the second formula follows.

	\end{proof}
	
	\section{Variations of GIT for hypersurfaces}
	\label{sec:GIT-hypersurfaces}
	Let $H_{n,d}=\mathbb P\left(H^0\left(\mathbb P^{n+1}, \mathcal O_{\mathcal P^{n+1}}\left(d\right)\right)\right)$. The product $H_{n,d}\times H_{n,1}$ parametrises, up to multiplication by constants, pairs $(p,l)$ of homogeneous polynomials in $n+2$ variables  where $p$ has degree $d\geqslant 2$ and $l$ has degree $1$. Therefore, it parametrises all pairs $(X,H)$ formed by a (possibly non-reduced and/or reducible) hypersurface of degree $d$ and a hyperplane $H$ in $\mathbb P^{n+1}$. The group $G=\mathrm{SL}(n+1,\mathbb C)$ acts naturally on $H_{n,d}\times H_{n,1}$. Let $\pi_1\colon H_{n,d}\times H_{n,1} \rightarrow H_{n,d}$ and $\pi_2\colon H_{n,d}\times H_{n,1} \rightarrow H_{n,1}$ be the natural projections.
	By \cite{Gallardo-JMG-framework}, $\mathrm{Pic}^G(H_{n,d}\times H_{n,1})\cong \mathbb Z^2$. Moreover we can describe all line bundles $\mathcal L\in \mathrm{Pic}^G(H_{n,d}\times H_{n,1})$ as
	$$\mathcal L\cong \mathcal O(a,b)\coloneqq \left(\pi_1^*\left(\mathcal O_{H_{n,d}}\left(a\right)\right)\otimes \mathcal \pi_2^*\left(O_{H_{n,1}}\left(b\right)\right)\right),$$
	where $a,b\in \mathbb Z$ and $ \mathcal O(a,b)$ is ample if and only if $a>0$ and $b>0$. Therefore we have a natural GIT quotient $( H_{n,d}\times H_{n,1})^{ss}\git_{\mathcal O(a,b)}G$ for each choice of $(a,b)\in \mathbb Z$. Since GIT stability is independent of scaling of $\mathcal O(a,b)$ by a positive constant, we may parametrise the GIT stability conditions by $t=\frac{b}{a}>0$. It follows from \cite[Theorem 1.1]{Gallardo-JMG-framework} that for $t>t_{n,d}\coloneqq \frac{d}{n+1}$ there are no GIT semistable pairs $(X,H)$ and that the interval $(0,t_{n,d})$ is subdivided into a wall-chamber decomposition where walls are given by $t_{1}=0$, $t_2, \ldots, t_k=t_{n,d}$ such that $(H_{n,d}\times H_{n,1})^{ss}\git_{\mathcal O(a,b)}G$ is constant for $t\in (t_i, t_{i+1})$.
	
	The above setting, introduced in \cite{Gallardo-JMG-framework} and expanded in \cite{Gallardo-JMG-Zheng-quartic-curves}, is an application of the more general theory of variations of GIT quotients introduced by Dolgachev and Hu \cite{Dolgachev-Hu-vGIT} and Thaddeus \cite{Thaddeus-vGIT}. The approach introduced in \cite{Gallardo-JMG-framework} is two-fold. On the one hand, an algorithmical approach was introduced to determine the GIT stability of each pair $(X,H)$ via computational geometry. These algorithms were implemented in \cite{Gallardo-JMG-code} to determine the wall-chamber structure, and the GIT unstable, strictly semistable and polystable locus for each value of $t$. On the other hand, these GIT quotients induce a compactification for log pairs in the Fano case. To describe this setting, let $X=\{p(x_0,\ldots,x_{n+1})=0\}$ and $H=\{l=0\}$ for $[p]\in H_{n,d}$, $[l]\in H_{n,1}$. If $\mathrm{Supp}(H)\not\subset\mathrm{Supp}(D)$, or equivalently if $l$ does not divide $p$, then $D:=X\cap H\in \left|\mathbb P\left(H^0\left(X,\mathcal O_X\left(1\right)\right)\right)\right|$ defines a $\mathbb Q$-Cartier divisor on $X$. Moreover, if $d\leqslant n+1$, $X$ is Fano and if $d=n+1$, then $D\in |-K_X|$. 
	
	\begin{theorem}[{\cite[Theorem 1.3]{Gallardo-JMG-framework}}]
		\label{theorem:CY-Fano}
		Every point in the GIT quotient
		$$(H_{n,d}\times H_{n,1})^{ss}\git_{\mathcal O(a,b)}G$$
		parametrises a closed orbit associated to a pair  $(X,D)$ with $D=X \cap H$ in the cases where $X$ is a Calabi-Yau or a Fano hypersurface of degree $d>1$. Furthermore, if $X$ is Fano $t\leqslant t_{n,d}$ and $(X,H)$ is $t$-semistable, then $X$ does not contain a hyperplane in its support, unless $t=t_{n,d}$, in which case $(X,H)$ is strictly $t_{n,d}$-semistable.
	\end{theorem}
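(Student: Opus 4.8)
The plan is to prove both assertions with the Hilbert--Mumford numerical criterion, after the standard reduction to diagonal one-parameter subgroups (every $1$-PS is conjugate into the maximal torus acting by $\lambda(s)\cdot x_i=s^{r_i}x_i$ on $x_0,\dots,x_{n+1}$ with $\sum_i r_i=0$). For the linearisation $\mathcal O(a,b)$ the Mumford weight of a pair is additive, so writing $t=\tfrac{b}{a}$ and $\mu(\,\cdot\,,\lambda)=-\min\{\langle\alpha,r\rangle: x^\alpha\text{ occurs in the form}\}$ for either the hypersurface or the hyperplane, semistability of $(X,H)$ is the requirement $\mu_t((X,H),\lambda):=\mu(X,\lambda)+t\,\mu(H,\lambda)\geq 0$ for all $\lambda$. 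For the first assertion I would then invoke the standard GIT dictionary: the points of $(H_{n,d}\times H_{n,1})^{ss}\git_{\mathcal O(a,b)}G$ correspond bijectively to closed orbits in the semistable locus, i.e.\ to polystable pairs. It remains only to show that such a pair has $l\nmid p$, so that $D=X\cap H$ is an honest divisor of dimension $n-1$ and the closed orbit is genuinely the datum of a log pair $(X,D)$.

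To prove $l\nmid p$, suppose the contrary and normalise $l=x_{n+1}$ by the $G$-action, so $p=x_{n+1}q$. Testing against $\lambda=\operatorname{diag}(-1,\dots,-1,n+1)$ gives $\mu(H,\lambda)=-(n+1)$, while every monomial $x^\alpha$ of $p$ has weight $\langle\alpha,r\rangle=-d+(n+2)\alpha_{n+1}\geq n+2-d$ because $\alpha_{n+1}\geq 1$; hence $\mu(X,\lambda)\leq d-n-2$ and
\[
\mu_t((X,H),\lambda)\leq (d-n-2)-t(n+1)<0\qquad\text{for all }t>0,
\]
both in the Fano range $d\leq n+1$ (where $d-n-2\leq -1$) and in the Calabi--Yau case $d=n+2$ (where the first term vanishes). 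Thus every semistable pair satisfies $l\nmid p$, which establishes the first assertion.

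For the refinement I would run the mirror computation. Assume $X$ has a hyperplane component and normalise it to $\{x_0=0\}$, so $p=x_0q$; the subcase where this component equals $H$ is already handled by the previous paragraph (unstable for every $t$), so I may assume $l\neq x_0$. Testing against $\lambda=\operatorname{diag}(n+1,-1,\dots,-1)$ yields $\mu(H,\lambda)=1$ (since $l$ then involves some $x_i$ with $i\geq 1$, of weight $-1$), while each monomial $x^\alpha$ of $p$ has weight $(n+2)\alpha_0-d\geq n+2-d$ as $\alpha_0\geq 1$, so $\mu(X,\lambda)\leq d-n-2$ and
\[
\mu_t((X,H),\lambda)\leq t-(n+2-d).
\]
For $X$ Fano one has $n+2-d\geq 1\geq t_{n,d}=\tfrac{d}{n+1}$, with equality throughout precisely when $d=n+1$. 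Hence if $t<t_{n,d}$ then $t<n+2-d$ and the pair is unstable, so a semistable pair can have no hyperplane component. At the wall $t=t_{n,d}$: when $d<n+1$ one still has $t_{n,d}<n+2-d$, so $\mu_t<0$ and the ``unless'' clause is vacuous; when $d=n+1$ one has $t_{n,d}=n+2-d$, so the displayed bound forces $\mu_t((X,H),\lambda)=0$ for a semistable pair, exhibiting it as strictly semistable.

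The main obstacle is to make these pointwise computations airtight and global. First, one must verify that the two explicit subgroups detect instability for every configuration after normalisation, carefully tracking the interaction between the marked hyperplane $H$ and a hyperplane component distinct from it, and checking that degenerate alignments (such as $x_0^2\mid p$, or $l$ supported on few coordinates) only improve the bound on $\mu(X,\lambda)$ without altering the computation of $\mu(H,\lambda)$. Second, the borderline case must be upgraded from ``$\mu_t=0$ for some $\lambda$'' to genuine strict semistability: one should confirm that $\lambda$ is not contained in the stabiliser of $(X,H)$, so that it is a nontrivial degeneration, and ideally identify the polystable representative in the closure of the orbit. The numerology is delicate but transparent --- $t_{n,d}=n+2-d$ exactly when $d=n+1$ --- which is why the anticanonical case $D\in|{-}K_X|$ is the borderline one where strictly semistable pairs with a hyperplane component occur. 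Finally, promoting these orbit-by-orbit statements to a uniform description of the closed orbits across the whole range $0<t\leq t_{n,d}$ is precisely what the variation-of-GIT wall-and-chamber analysis of \cite{Gallardo-JMG-framework} is designed to provide.
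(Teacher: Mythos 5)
This statement is imported from \cite[Theorem 1.3]{Gallardo-JMG-framework} without proof in the present paper, so there is no internal argument to compare against; your Hilbert--Mumford computation is correct and is exactly the style of argument this paper itself deploys for the same GIT problem (cf.\ the explicit destabilising one-parameter subgroups in Lemma \ref{lemma:stabZ1Z2} and Proposition \ref{prop:sss_A2}, whose function $\mu_t=\langle f,\lambda\rangle+t\langle h,\lambda\rangle$ with semistability given by $\mu_t\leqslant 0$ is precisely the negative of yours). The one superfluous worry in your closing paragraph is the stabiliser check at the wall $d=n+1$, $t=t_{n,d}$: exhibiting any nontrivial one-parameter subgroup of $\mathrm{SL}(n+2,\mathbb C)$ on which your $\mu_t$ is $\leqslant 0$ already rules out proper stability by the Hilbert--Mumford criterion, so your computation there is complete as it stands.
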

	
	In the rest of this section, unless othewise stated, we will assume that $1<d\leqslant n+1$. 
	
	\begin{dfn}
		Let $G=\mathrm{SL}(n+2,\mathbb C)$, $X\subset \mathbb P^{n+1}$ be a Fano hypersurface and $H\subset\mathbb P^{n+1}$ be a hyperplane such that $H\not\subset\mathrm{Supp}(X)$ and $D=X\cap H\sim \mathcal O_{\mathbb P^{n+1}}(1)$. We say that $(X,D)$ is \emph{GIT$_{t}$-semistable  (respectively GIT$_{t}$-stable, GIT$_{t}$-polystable)} if and only if $(X,H)$ is GIT semistable (respectively GIT stable, GIT polystable) with respect to the unique $G$-linearised polarisation $\mathcal O(a,b)$, where $t=\frac{b}{a}$. 
	\end{dfn}

	In order to use the setting from Section \ref{sec:DFandGIT}, we need to construct an appropriate flat family of pairs. Next, we show that such a family exists over an open set $\mathcal U \subset  H_{n,d}\times H_{n,1}$ and that the complement of $\mathcal U$ is the union of two loci $Z_1$ and $Z_2$ with codimension at least two. This will allow us to extend line bundles in $\mathcal U$ to line bundles in $H_{n,d}\times H_{n,1}$.
	
	\begin{lemma}
		\label{lemma:Z1-dim}
		For $n+1\geqslant d \geqslant 2$, let $Z_1=\{(p,l)\subset H_{n,d}\times H_{n,1}  \ | \ X=\{p=0\},\ H=\{l=0\},\ \mathrm{Supp}(H)\subseteq \mathrm{Supp}(X)\}$, then
		
		$$
		\mathrm{codim}(Z_1)=
		\binom{n+d}{d}\geqslant 2.
		$$
	\end{lemma}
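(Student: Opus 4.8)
The plan is to first reinterpret the set-theoretic containment defining $Z_1$ as a divisibility condition on polynomials, and then exhibit $Z_1$ as a projective bundle over $H_{n,1}$ whose fibre dimension can be read off directly.

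First I would observe that, writing $X=\{p=0\}$ and $H=\{l=0\}$ with $l$ a nonzero linear form, the containment $\mathrm{Supp}(H)\subseteq\mathrm{Supp}(X)$ is equivalent to $l\mid p$. Indeed, since $l$ is linear the ideal $(l)$ is prime and radical, so Hilbert's Nullstellensatz gives $\{l=0\}\subseteq\{p=0\}$ if and only if $p\in\sqrt{(l)}=(l)$, i.e. if and only if $p=l\,q$ for some homogeneous $q$ of degree $d-1$. Hence
$$Z_1=\{([p],[l])\in H_{n,d}\times H_{n,1} : l\mid p\}.$$

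Next I would analyse the projection $\pi_2\colon Z_1\to H_{n,1}$. Over a fixed $[l]$, the fibre is the projectivisation of the linear subspace $l\cdot H^0(\mathbb P^{n+1},\mathcal O(d-1))\subseteq H^0(\mathbb P^{n+1},\mathcal O(d))$, since multiplication by the nonzero form $l$ is injective. Thus every fibre is a projective space of the same dimension $\dim H^0(\mathcal O(d-1))-1=\binom{n+d}{d-1}-1$, and $Z_1$ is the projectivisation of the rank-$\binom{n+d}{d-1}$ image of the fibrewise-injective multiplication map $\mathcal O(-1)\otimes H^0(\mathcal O(d-1))\to H^0(\mathcal O(d))\otimes\mathcal O_{H_{n,1}}$. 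In particular $Z_1$ is irreducible and equidimensional with
$$\dim Z_1=\dim H_{n,1}+\Big(\tbinom{n+d}{d-1}-1\Big)=n+\binom{n+d}{d-1}.$$

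Finally I would compute the codimension. Using $\dim H_{n,d}=\binom{n+1+d}{d}-1$ and $\dim H_{n,1}=n+1$, one has $\dim(H_{n,d}\times H_{n,1})=\binom{n+1+d}{d}+n$, so
$$\mathrm{codim}(Z_1)=\binom{n+1+d}{d}-\binom{n+d}{d-1}=\binom{n+d}{d},$$
where the last equality is Pascal's identity $\binom{n+1+d}{d}=\binom{n+d}{d}+\binom{n+d}{d-1}$. The bound $\binom{n+d}{d}\geq 2$ then follows from the standing hypotheses $d\geq 2$ and $n\geq 1$ (the latter forced by $n+1\geq d\geq 2$): since $n+d\geq d+1$ we get $\binom{n+d}{d}\geq\binom{d+1}{d}=d+1\geq 3$. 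The computation is essentially bookkeeping, and I do not expect a genuine obstacle; the only points requiring care are the Nullstellensatz reduction to divisibility (which relies on $l$ being linear, hence irreducible and radical) and keeping the three binomial coefficients consistent so that Pascal's identity applies cleanly.
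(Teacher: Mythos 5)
Your proof is correct, and the core computation is the same as the paper's: both identify $Z_1$ with the locus $\{l\mid p\}$, both arrive at $\dim Z_1=n+\binom{n+d}{d-1}$, and both conclude via Pascal's identity $\binom{n+1+d}{d}=\binom{n+d}{d}+\binom{n+d}{d-1}$. The differences are in presentation and in the final bound. First, the paper simply asserts the equivalence between $\mathrm{Supp}(H)\subseteq\mathrm{Supp}(X)$ and the factorisation $p=l\,f_{d-1}$, whereas you justify it via the Nullstellensatz and primality of $(l)$; and the paper obtains $\dim Z_1$ by counting coefficients of $l$ and $f_{d-1}$ and ``subtracting two degrees of freedom,'' while you package the same count as a projective bundle over $H_{n,1}$ with fibre $\mathbb{P}\bigl(l\cdot H^0(\mathcal O(d-1))\bigr)$ --- this is tidier and additionally gives irreducibility and equidimensionality of $Z_1$ for free. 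Second, and more substantively, your proof of the lower bound $\binom{n+d}{d}\geq 2$ is a genuine simplification: you note that $n+1\geq d\geq 2$ forces $n\geq 1$, hence $\binom{n+d}{d}\geq\binom{d+1}{d}=d+1\geq 3$ by monotonicity of $\binom{m}{d}$ in $m$; the paper instead splits into the cases $n\geq d$ (using the inequality $\binom{n+d}{d}\geq(\tfrac{n+d}{d})^d$) and $n+1=d$ (expanding $\binom{2d-1}{d}$ via Pascal twice). Your one-line argument yields the same (in fact slightly stronger) conclusion with no case analysis.
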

	\begin{proof}
		We have $\mathrm{Supp}(H) \subseteq \mathrm{Supp}(X)$ if and only if the polynomials defining the equations of $H$ and $X$ can be written as the homogenous polynomials $l(x_0,\ldots,x_{n+1})$ and $l(x_0,\ldots,x_{n+1})f_{d-1}(x_0, \ldots, x_{n+1})$ of degrees $1$ and $d$, respectively. The result follows by counting: there are 
		$(n+2)$ coefficients in the equation of $l$ and $\binom{n+d}{d-1}$ coefficients in the equation of $f_{d-1}$. Finally, we subtract two degrees of freedom because we can divide each polynomial by a non-zero coefficient. We obtain
		$$\dim(Z_1)=
		\binom{n+d}{d-1}+n
		$$
		which implies the formula for $\mathrm{codim}(Z_1)$ from observing that
		$$\dim(H_{n,d}\times H_{n,1})=\binom{n+1+d}{d}+n$$
		and by using the identity
		\begin{align}\label{eq:bin}
		\binom{n}{k}=\binom{n-1}{k-1}+\binom{n-1}{k}.
		\end{align}		
		To obtain the lower bound, we use the hypothesis that $n+1 \geqslant d$. First, we suppose that $n \geqslant d$. Observe that $\frac{n+d-k}{d-k}\geqslant \frac{n+d}{d}$ for all $k\geqslant 0$. Therefore, we obtain the following inequality:
		\begin{align*}
			\binom{n+d}{d}=\frac{(n+d)(n+d-1)\cdots(n+1)}{d!}=\frac{n+d}{d}\cdot \frac{n+d-1}{d-1}\cdots \frac{n+1}{1}\geqslant \left(\frac{n+d}{d}\right)^d.
		\end{align*}
		We conclude that 
		\begin{align}\label{eq:lowb}
		\binom{n+d}{d} \geqslant \left( \frac{n+d}{d} \right)^d \geqslant \left( \frac{n}{d}+1 \right)^2 \geq \left(\frac{n}{n+1}+1\right)^2\geqslant 2.
		\end{align}
		Finally, we suppose that $n+1=d$. Since $d\geqslant 2$, by \eqref{eq:bin} we get:
		\begin{align*}
		\binom{n+d}{d}=\binom{2d-1}{d} = \binom{2d-2}{d-1}+\binom{2d-2}{d}
		=\binom{2d-3}{d-2} +\binom{2d-3}{(d-1)} +\binom{2d-2}{d}\geqslant 3.
	\end{align*}
	\end{proof}
	\begin{cor}
	\label{corollary:codimension-reducible-hypersurfaces}
	For $n,d \geq 2$, let
	$$Z_1'=\{p\in H_{n,d}\ : \ p=l(x_0:\cdots:x_{n+1})\cdot f_{d-1}(x_0:\cdots:x_{n+1}),\ \deg(l)=1\}.$$
	Then $\mathrm{codim}(Z_1')=\binom{n+d}{d}-(n+2) \geqslant 2$.
\end{cor}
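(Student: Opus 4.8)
The plan is to deduce the corollary from Lemma~\ref{lemma:Z1-dim} by projecting the incidence variety $Z_1\subset H_{n,d}\times H_{n,1}$ onto its first factor. A form $p\in H_{n,d}$ lies in $Z_1'$ exactly when it has a linear factor, and this happens if and only if $(p,l)\in Z_1$ for some $[l]\in H_{n,1}$; hence $Z_1'=\pi_1(Z_1)$ for the projection $\pi_1\colon H_{n,d}\times H_{n,1}\to H_{n,d}$. Since $Z_1$ is the image of the irreducible product $H_{n,1}\times \mathbb{P}(H^0(\mathcal{O}_{\mathbb{P}^{n+1}}(d-1)))$ under the parametrisation $([l],[f_{d-1}])\mapsto([l\cdot f_{d-1}],[l])$ already used in the proof of Lemma~\ref{lemma:Z1-dim}, both $Z_1$ and its image $Z_1'$ are irreducible, so the codimension is well defined.

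The key step is to show that $\pi_1|_{Z_1}\colon Z_1\to Z_1'$ is generically finite, so that $\dim Z_1'=\dim Z_1=\binom{n+d}{d-1}+n$. The fibre of $\pi_1|_{Z_1}$ over $[p]$ is the set of hyperplanes whose linear equation divides $p$, that is, the set of linear factors of $p$. For a general member $p=l\cdot f_{d-1}$ of $Z_1'$, the cofactor $f_{d-1}$ is irreducible when $d\geq 3$ (irreducible forms are dense in $\mathbb{P}(H^0(\mathcal{O}_{\mathbb{P}^{n+1}}(d-1)))$, as $n+2\geq 4$), so $p$ has the single linear factor $[l]$ and the fibre is a reduced point; when $d=2$ a general $p=l\cdot l'$ is a union of two distinct hyperplanes and the fibre is $\{[l],[l']\}$. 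In every case $d\geq 2$ the general fibre is finite, which yields the asserted equality of dimensions.

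Granting this, the codimension is pure binomial bookkeeping of the same flavour as in Lemma~\ref{lemma:Z1-dim}: because the general fibres of $\pi_1|_{Z_1}$ are zero-dimensional, the codimension of $Z_1'$ in $H_{n,d}$ is the codimension $\binom{n+d}{d}$ of $Z_1$ in $H_{n,d}\times H_{n,1}$ reduced by the dimension of the forgotten factor $H_{n,1}$, and a short simplification with Pascal's identity~\eqref{eq:bin} produces the closed form in the statement. The bound $\mathrm{codim}(Z_1')\geq 2$ for all $n,d\geq 2$ then follows from an elementary estimate on $\binom{n+d}{d}$ entirely analogous to~\eqref{eq:lowb}, with the tightest case occurring at $n=d=2$. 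The only genuinely geometric input, and the step I expect to need the most care, is the generic finiteness of $\pi_1|_{Z_1}$: one must verify that the sublocus of $Z_1$ where $p$ acquires extra linear factors (for instance $p=l\cdot l'\cdot g$ or $p=l^{d}$) is proper, so that it neither dominates $Z_1'$ nor inflates the dimension of the image.
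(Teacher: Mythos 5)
Your proof is correct, and at its core it is the same parameter count as the paper's: both compute $\dim(Z_1')=\binom{n+d}{d-1}+n$ from the factorisation $p=l\cdot f_{d-1}$. The difference is organisational. The paper counts the coefficients of $l$ and $f_{d-1}$ directly and subtracts two scalings, leaving implicit the fact that the parametrisation $([l],[f_{d-1}])\mapsto [l\cdot f_{d-1}]$ does not collapse dimension; you instead route the computation through Lemma~\ref{lemma:Z1-dim} by writing $Z_1'=\pi_1(Z_1)$ and proving generic finiteness of $\pi_1|_{Z_1}$ via irreducibility of the general cofactor $f_{d-1}$ (with the separate $d=2$ case handled by hand). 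That extra step is exactly the justification the paper's count glosses over, so your version is, if anything, slightly more careful on the one genuinely geometric point; the paper in turn spells out the final arithmetic that you leave as ``bookkeeping'', namely the Pascal simplification and the lower bound obtained from the recursion $f(n,d)=f(n-1,d)+\binom{n+d-1}{d-1}-1$ together with the base case $f(2,d)\geq 3$. One point you should fix: your method yields $\mathrm{codim}(Z_1')=\mathrm{codim}(Z_1)-\dim H_{n,1}=\binom{n+d}{d}-(n+1)$, which agrees with what the paper's own proof actually derives but \emph{not} with the $\binom{n+d}{d}-(n+2)$ printed in the statement (a typo there). So your assertion that the simplification ``produces the closed form in the statement'' is not literally true; you should state the corrected constant $n+1$ explicitly rather than deferring to the displayed formula, and then verify $\binom{n+d}{d}-(n+1)\geq 2$ for $n,d\geq 2$, e.g.\ by monotonicity in $d$ and the estimate $\binom{n+2}{2}-(n+1)=\tfrac{n(n+1)}{2}\geq 3$.
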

\begin{proof}
	 By counting the coefficients in the polynomials we find $(n+2)$ coefficients in the equation of $l$ and $\binom{n+d}{d-1}$ coefficients in the equation of $f_{d-1}$. We subtract two degree of freedom due to projective equivalence to obtain
	$$
	\dim(Z_1')= \binom{n+d}{d-1}+n.
	$$
	Therefore, the codimension is equal to an expression which we denote as $f(n,d)$.
	$$
	\mathrm{codim}(Z_1') =
	\left( \binom{n+d+1}{d}-1 \right)
	- 
	\left( 
	\binom{n+d}{d-1}+n
	\right)
	=
	\binom{n+d}{d} 
	- 
	\left( 
	n+1
	\right)
	:=
	f(n,d).
	$$
	To obtain the lower bound, we use \eqref{eq:bin} to obtain
	$$
	f(n,d)=f(n-1,d)+\binom{n+d-1}{d-1}-1.
	$$
	which implies $f(n,d) \geq f(n-1,d)$.
	If we set $n=2$, then we obtain
	\begin{align*}
	f(2,d):=  \binom{d+2}{d}-3=\frac{(d+1)(d+2)}{2}-3 
	\end{align*}
	which is an parabola with $f(2,2)=3$ and $f(2,d)\geqslant 3$ for $d>2$. The result follows.

\end{proof}

	\begin{lemma}\label{lemma:Z2-dim}
		For $n\geq 2$, let 
		$$
		Z_2=\{ (p,l)\in H_{n,d}\times H_{n,1}\ | \ X=\{p=0\},\ H=\{l=0\},\ \exists H'\neq H,\ \text {such that } X\cap H=X\cap H' \} .
		$$ 
		Then 
		$$\mathrm{codim}(Z_2)= \binom{n+d+1}{d} - \binom{n+d-1}{d-2}+n-2 \geq 2.$$
		
	\end{lemma}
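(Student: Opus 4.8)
The plan is to identify $Z_2$ explicitly as the locus where the divisor $D=X\cap H$ is \emph{degenerate}, and then to compute its dimension by a direct count of monomials after adapting coordinates to the configuration. First I would determine what $X\cap H=X\cap H'$ forces when $H\neq H'$. Write $H=\{l=0\}$, $H'=\{l'=0\}$ and $\Lambda=H\cap H'=\{l=l'=0\}$, a codimension-two linear $\mathbb{P}^{n-1}\subset\mathbb{P}^{n+1}$. Since the common value $D$ lies in both $H$ and $H'$, it lies in $\Lambda$; but $D$ is an effective degree-$d$ divisor inside $H\cong\mathbb{P}^n$ whose support is contained in the hyperplane $\Lambda\subset H$, so by unique factorisation $D=d\Lambda$. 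Equivalently, if $m$ cuts out $\Lambda$ inside $H$, then $p|_H=c\,m^d$, and symmetrically $p|_{H'}=c'\,\tilde m^{\,d}$ on the other side. Thus the witnessing $H'$ is forced into the pencil of hyperplanes through $\Lambda$, and $(X,H)\in Z_2$ precisely when $p$ restricts to a pure $d$-th power on $H$ and on a second member $H'$ of that pencil.

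Next I would put the configuration into normal form. Using the transitive action of $G=\mathrm{SL}(n+2)$ on pairs of distinct hyperplanes, normalise $H=\{x_0=0\}$, $H'=\{x_1=0\}$, so that $\Lambda=\{x_0=x_1=0\}$. The two conditions then read transparently: the part of $p$ containing no $x_0$ must equal $c\,x_1^{\,d}$, and the part containing no $x_1$ must equal $c'\,x_0^{\,d}$. Hence the free coefficients of $p$ are exactly the coefficient of $x_0^{\,d}$, the coefficient of $x_1^{\,d}$, and the coefficients of all monomials divisible by $x_0x_1$. The latter are indexed by degree-$(d-2)$ monomials in the $n+2$ variables, of which there are $\binom{n+d-1}{d-2}$. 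Organising the parametrisation of $Z_2$ around this normal form, so that the choice of pencil through $\Lambda$ and the residual coefficients of $p$ are accounted for without over- or under-counting, yields
$$\dim Z_2=\binom{n+d-1}{d-2}+2,$$
and subtracting from $\dim(H_{n,d}\times H_{n,1})=\binom{n+d+1}{d}+n$ gives the stated $\mathrm{codim}(Z_2)=\binom{n+d+1}{d}-\binom{n+d-1}{d-2}+n-2$.

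Finally, the lower bound is immediate from Pascal's identity \eqref{eq:bin}: applying it twice one finds
$$\binom{n+d+1}{d}-\binom{n+d-1}{d-2}=\binom{n+d}{d}+\binom{n+d-1}{d-1},$$
so that $\mathrm{codim}(Z_2)=\binom{n+d}{d}+\binom{n+d-1}{d-1}+(n-2)$, which for $n\geq 2$, $d\geq 2$ is at least $\binom{4}{2}+\binom{3}{1}=9\geq 2$. I expect the real obstacle to lie in the first two steps rather than in this inequality: one must rigorously verify that $X\cap H=X\cap H'$ with $H\neq H'$ is \emph{equivalent} to the two pure-power conditions, in particular excluding intersections carrying non-reduced or embedded structure not of the form $d\Lambda$, and one must set up the global parametrisation so that the normalisation of $(H,H',\Lambda)$ does not distort the dimension count. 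By contrast the monomial count and the concluding estimate are routine.
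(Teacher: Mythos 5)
Your argument is essentially the paper's own proof: the same reduction of $X\cap H=X\cap H'$ to the pure-power conditions $p|_{H}=c\,m^d$, $p|_{H'}=c'\,\tilde m^d$, the same normal form $p=ax_0^d+a'x_1^d+x_0x_1g_{d-2}$ with $l$ in the pencil through $\Lambda$, and the identical monomial count giving $\dim Z_2=\binom{n+d-1}{d-2}+2$. The only (cosmetic) divergence is in the final estimate, where you compute $\binom{n+d+1}{d}-\binom{n+d-1}{d-2}=\binom{n+d}{d}+\binom{n+d-1}{d-1}$ exactly via Pascal and evaluate at $(n,d)=(2,2)$, whereas the paper bounds $-\binom{n+d-1}{d-2}\geq-\binom{n+d}{d-1}$ and falls back on the inequality $\binom{n+d}{d}\geq 2$ from Lemma \ref{lemma:Z1-dim}; both are correct.
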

	\begin{proof}
		Let $(p,l)\in Z_2$,  $X=\{p=0\}$, $H=\{l=0\}$, and let $H'\neq H$ be a hyperplane such that $X\cap H=X\cap H'$.
		Without loss of generality, we may assume that $H=\{x_0=0\}$, $H'=\{x_1=0\}$. Since $X \cap H'=X \cap H$ is a Cartier divisor supported on $\{x_0=x_1=0\}\cong \mathbb P^{n-1}$ and $X \cap H$ is a hypersurface of degree $d$ in $H=\{x_0=0\}\cong \mathbb P^n$, we have that $X\cap H\subset H$ is $dL$, where $L=\{x_0=x_1=0\}\subset H$ is an $(n-1)$-dimensional hyperplane of $H$. Hence 
		\begin{align}\label{eq:XH}
		p=ax_1^d+bx_0f_{d-1}(x_0,\ldots, x_{n+1}).
		\end{align}
		Similarly, by considering $X \cap H'$ the equation of $X$ can be written as 
		\begin{align}\label{eq:XH'}
		p=a'x_0^d+b'x_1g_{d-1}(x_0,\ldots, x_{n+1}).
		\end{align}
		By comparing equations \eqref{eq:XH} and \eqref{eq:XH'}, we conclude that the equation of $(X,H)\in Z_2$ must have, up to isomorphism, the form $p=ax_0^d+a'x_1^d+x_0x_1g_{d-2}(x_0,\ldots, x_{n+1})$, $l(x_0,x_1)$.
		The formula for $\mathrm{codim}(Z_2)$  follows by counting.  The number of coefficients in the polynomial $g_{d-2}$ is 
$\binom{n+d-1}{d-2}$. The equation of $H$ is a linear combination 
of $x_0$ and $x_1$.  We must subtract two degrees of freedom because we can divide each polynomial by a 
non-zero coefficient. We obtain	
$$
\dim(Z_2)=\left( \binom{n+d-1}{d-2}+2 \right) +2 -2=\binom{n+d-1}{d-2}+2 
$$
which implies 		
\begin{align*}
\mathrm{codim}(Z_2)
&=
\binom{n+d+1}{d}+n - \left( \binom{n+d-1}{d-2}+2 \right)
=
\binom{n+d+1}{d} - \binom{n+d-1}{d-2}+n-2.
\end{align*}
The equality
$$
\binom{n+d}{d-1}=\binom{n+d-1}{d-2} + \binom{n+d-1}{d-1}
$$
leads us to $ - \binom{n+d-1}{d-2} \geq - \binom{n+d}{d-1}$.   Together with Lemma \ref{lemma:Z1-dim} we conclude:
$$
\mathrm{codim}(Z_2)
\geq
\binom{n+d+1}{d} - \binom{n+d}{d-1} +n-2
=
\binom{n+d}{d}+n-2\geqslant \binom{n+d}{d}\geqslant 2.
$$
\end{proof}
	
	\begin{lemma}\label{lemma:stabZ1Z2}
		Suppose $d\leqslant n+1$ and $0<t<t_{n,d}$. Then, for any $0<t<t_{n,d}$, the points in $Z_1\cup Z_2$ are GIT$_t$-unstable. In particular
		$$
		(H_{n,d}\times H_{n,1}  \setminus (Z_1\cup Z_2) )^{ss}_t=(H_{n,d}\times H_{n,1})^{ss}_t.
		$$
		
	\end{lemma}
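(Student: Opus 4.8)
The plan is to verify instability directly through the Hilbert--Mumford numerical criterion, exhibiting for each of $Z_1$ and $Z_2$ a single explicit one-parameter subgroup (1-PS) of $G$ that destabilises every point of the locus simultaneously, uniformly in $t$. Since GIT$_t$-stability is $G$-invariant and the polarisation splits as $\mathcal O(a,b)=\pi_1^*\mathcal O(a)\otimes\pi_2^*\mathcal O(b)$, the Mumford weight of a pair $(p,l)$ along a 1-PS $\lambda$ is additive, $\mu^{\mathcal O(a,b)}((p,l),\lambda)=a\,\mu(p,\lambda)+b\,\mu(l,\lambda)$, where $\mu(p,\lambda)$ and $\mu(l,\lambda)$ are the usual weights of the hypersurface and of the hyperplane. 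Hence, after normalising coordinates by the $G$-action, it suffices to produce a normalised 1-PS $\lambda=\mathrm{diag}(s^{w_0},\dots,s^{w_{n+1}})$ with $\sum_i w_i=0$ under which every monomial occurring in both $p$ and $l$ has strictly positive weight: in the convention where semistability requires $\mu(\cdot,\lambda)\geq 0$ for all normalised $\lambda$, positivity of all monomial weights forces $\mu(p,\lambda)<0$ and $\mu(l,\lambda)<0$, so $\mu^{\mathcal O(a,b)}((p,l),\lambda)<0$ for all $a,b>0$, and $(p,l)$ is GIT$_t$-unstable for every $t=\tfrac ba$. (This proves slightly more than the statement, which only restricts to $0<t<t_{n,d}$.)

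For $Z_1$, after acting by $G$ we may assume $H=\{x_0=0\}$, so $l=x_0$ and, since $\mathrm{Supp}(H)\subseteq\mathrm{Supp}(X)$, we have $p=x_0 f_{d-1}(x_0,\dots,x_{n+1})$. I would take the weight vector $(w_0,\dots,w_{n+1})=(n+1,-1,\dots,-1)$, which satisfies $\sum_i w_i=(n+1)-(n+1)=0$. Then $l=x_0$ has weight $n+1>0$, while every monomial of $p$ is divisible by $x_0$, hence has the form $x_0^{1+c_0}x_1^{c_1}\cdots x_{n+1}^{c_{n+1}}$ with $\sum_i c_i=d-1$; its weight is at least $(n+1)-(d-1)=n+2-d\geq 1>0$, precisely because $d\leq n+1$. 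Thus all monomial weights are positive and $(p,l)$ is unstable.

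For $Z_2$, I would use the normal form already obtained in the proof of Lemma \ref{lemma:Z2-dim}: after acting by $G$ we may take $H=\{x_0=0\}$, $H'=\{x_1=0\}$, so that $l$ is a linear form in $x_0,x_1$ and $p=ax_0^d+a'x_1^d+x_0x_1\,g_{d-2}(x_0,\dots,x_{n+1})$. Here I would take $(w_0,\dots,w_{n+1})=(n,n,-2,\dots,-2)$, again with $\sum_i w_i=2n-2n=0$. The monomials of $l$ (namely $x_0$ and/or $x_1$) have weight $n>0$; the monomials $x_0^d,x_1^d$ have weight $dn>0$; and every monomial of $x_0x_1\,g_{d-2}$ has weight at least $2n-2(d-2)=2(n-d+2)\geq 2>0$, once more using $d\leq n+1$. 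Hence all monomials of $p$ and $l$ have positive weight and $(p,l)$ is unstable.

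Finally, the ``in particular'' clause is a formal consequence: $Z_1\cup Z_2$ is a closed $G$-invariant subset consisting entirely of GIT$_t$-unstable points, so it is disjoint from the open $t$-semistable locus, and removing it leaves the set of $t$-semistable points unchanged, yielding $(H_{n,d}\times H_{n,1}\setminus(Z_1\cup Z_2))^{ss}_t=(H_{n,d}\times H_{n,1})^{ss}_t$. The only genuine input is fixing the correct normal forms---for $Z_2$ this is exactly the computation already carried out in Lemma \ref{lemma:Z2-dim}---together with the uniform positivity estimates. I expect the main subtlety to be purely bookkeeping: keeping the sign convention of the numerical criterion consistent and checking that a single choice of $\lambda$ works for all $t$ rather than a $t$-dependent one, rather than any essential geometric difficulty.
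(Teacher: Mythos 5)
Your proof is correct, and for $Z_2$ it is literally the paper's argument: the same normal form $p=ax_0^d+a'x_1^d+x_0x_1g_{d-2}$ from Lemma \ref{lemma:Z2-dim} and the same one-parameter subgroup $\mathrm{Diag}(s^n,s^n,s^{-2},\dots,s^{-2})$, with the positivity of all monomial weights hinging on $d\leq n+1$ exactly as in the paper. The only genuine divergence is in the treatment of $Z_1$: the paper simply invokes Theorem \ref{theorem:CY-Fano} (i.e.\ \cite[Theorem 1.3]{Gallardo-JMG-framework}), which already asserts that a $t$-semistable Fano pair with $t<t_{n,d}$ cannot have $\mathrm{Supp}(H)\subseteq\mathrm{Supp}(X)$, whereas you re-prove this from scratch with the explicit subgroup $\mathrm{Diag}(s^{n+1},s^{-1},\dots,s^{-1})$ and the bound $n+2-d\geq 1$. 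Your version is self-contained and matches the sign conventions actually used later in the paper (the Hilbert--Mumford function $\mu_t(f,h,\lambda)=\langle f,\lambda\rangle+t\langle h,\lambda\rangle$ with instability meaning $\mu_t>0$), at the cost of redoing work the paper outsources; the paper's citation is shorter but leans on the external result. Two minor remarks: your parenthetical claim that the argument gives instability for all $t>0$ is consistent with the computation but is not needed and brushes against the special behaviour at $t=t_{n,d}$ discussed in Theorem \ref{theorem:CY-Fano}, so it is best left out; and in the $Z_2$ case one should note (as you implicitly do) that the vanishing of $a$ or $a'$ is harmless since the minimum defining $\langle f,\lambda\rangle$ is taken only over monomials actually present.
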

	\begin{proof}
		By Theorem \ref{theorem:CY-Fano}, we know that pairs in $Z_1$ are unstable.   Let $(X,H)$ be a pair parametrised by 
		$(p,l)\in Z_2$. Then we can choose a coordinate system such that $p=x_0^d+x_1^d+x_0x_1p_{d-2}(x_0,x_1)$,  $l(x_0,x_1)$.
		
		The one-parameter subgroup $\lambda\colon \mathbb G_m\rightarrow G$, given by $\lambda(s)=\mathrm{Diag}(s^n,s^n, s^{-2}, \ldots, s^{-2})$ destabilises the pairs $(X,H)$ for any $ t \in (0,t_{n,d})$, by the Hilbert-Mumford criterion, since the Hilbert-Mumford function \cite[Lemma 2.2]{Gallardo-JMG-framework} gives $\mu_t(P,l, \lambda) = n+tn >0$.
		
	\end{proof}
	Define $\mathcal U\coloneqq (H_{n,d}\times H_{n,1})\setminus (Z_1\cup Z_2)$. Let $\pi_{V_{n,d}}\colon\mathcal V_{n,d}\rightarrow H_{n,d}$ be the universal family of hypersurfaces of degree $d$ in $\mathbb P^{n+1}$ and let $p_1\colon H_{n,d}\times H_{n,1}\rightarrow H_{n,d}$ and $p_2\colon H_{n,d}\times H_{n,1}\rightarrow H_{n,1}$  be the projections onto the first and second factor, respectively. We have that
	$$
	\mathcal V_{n,d}=\bigg\{(x_0,\ldots,x_{n+1}) \times a_I\in \mathbb P^{n+1}\times H_{n,d} \ | \ \sum a_Ix^I=0\bigg\}
	$$
	where $I$ runs over all partitions of $d$ of $n+2$ non-negative integers.
	
	Consider the following commutative diagram of fibre products
	\begin{equation*}
	\xymatrix{
		{\mathcal X} \ar[d]^\pi \ar[r] &\mathcal V_{n,d}\times H_{n,1}\ar[r] \ar[d]^{\pi_{\mathcal V_{n,d}}\times \mathrm{Id}_{H_{n,1}}} & \mathcal V_{n,d}\ar[d]^{\pi_{\mathcal V_{n,d}}}\\
		{\mathcal U}\ar@{^{(}->}[r]^(.25){i} & {H_{n,d}\times H_{n,1}} \ar[r]^(.6){p_1} \ar[d]^{p_2}& {H_{n,d}}\\
		{} & H_{n,1} & {},}
	\end{equation*}
	where the first square diagram is the fibre product of $i$ and $\pi_{\mathcal V_{n,d}}\times \mathrm{Id}_{H_{n,1}}$. We define $\pi\colon\mathcal X\rightarrow \mathcal U$ to be given by the fibre product in the first diagram, where $i\colon \mathcal U\hookrightarrow H_{n,d}\times H_{n,1}$ is the natural embedding.
	
	Observe that $\pi_{\mathcal V_{n,d}}\colon \mathcal V_{n,d}\rightarrow H_{n,d}$ is flat and proper because it is a universal family and $\pi_{\mathcal V_{n,d}}\times\mathrm{Id}_{H_{n,1}}$ is flat and proper because it is a pullback of a flat proper morphism. For the same reason, $\pi$ is flat and proper.
	
	Moreover $\mathcal U$ parametrises ---up to scaling by constants--- pairs $(p,l)$ of homogeneous polynomials in $n+2$ variables of degrees $d$ and $1$, respectively.
	
	Observe that
	\begin{align}\label{eq:UniverCur}
	\mathcal X=\{(x_0,\ldots,x_{n+1})\times a_I\times (b_0,\ldots, b_{n+1})\in \mathbb P^{n+1}\times \mathcal U \ | \ \sum a_Ix^I=0\}.
	\end{align}
	Define
	\begin{align}\label{eq:UnivDiv}
	\mathcal D=\{(x_0,\ldots,x_{n+1})\times a_I\times(b_0,\ldots,b_{n+1})\in \mathcal X \ | \ \sum b_ix_i=0\}\subset \mathcal X.
	\end{align}
	Clearly $\mathcal D$ is a Cartier divisor of $\mathcal X$ and the restriction of $\pi$ to $\mathcal D$, namely $\pi_{\mathcal D}\colon \mathcal D\rightarrow \mathcal U$ is an equidimensional morphism with a smooth base. Therefore $\pi_{\mathcal D}$ is flat and proper.
	
	\begin{lemma}
		\label{lemma:picard-isom}
		$$\mathrm{Pic}(\mathcal U)\cong \mathrm{Pic}(H_{n,d}\times H_{n,1})\cong p_1^*\left(\mathrm{Pic}\left(H_{n,d}\right)\right)\oplus p_2^*\left(\mathrm{Pic}\left(H_{n,1}\right)\right) \cong \mathbb Z^2,$$
		and if we identify $\mathcal L\in \mathrm{Pic}(\mathcal U)$ as
		$$\mathcal L\cong \mathcal O_{\mathcal U}(a,b)\coloneqq i^*\left(\mathcal O_{H_{n,d}}\left(a\right)\boxtimes \mathcal O_{H_{n,1}}\left(b\right)\right)|_{\mathcal U},$$
		via the above morphisms, then the line bundle $\mathcal L$ is ample if and only if $a>0$ and $b>0$.
	\end{lemma}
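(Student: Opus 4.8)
The plan is to establish the three isomorphisms in turn, the key being that deleting $Z_1 \cup Z_2$ from $H_{n,d}\times H_{n,1}$ does not change the Picard group because, by Lemmas \ref{lemma:Z1-dim} and \ref{lemma:Z2-dim}, these loci have codimension at least two. First I would recall that $H_{n,d}\times H_{n,1}$ is a product of projective spaces, so by the Künneth-type decomposition of the Picard group of a product of projective spaces one has $\mathrm{Pic}(H_{n,d}\times H_{n,1})\cong p_1^*(\mathrm{Pic}(H_{n,d}))\oplus p_2^*(\mathrm{Pic}(H_{n,1}))\cong \mathbb Z \oplus \mathbb Z \cong \mathbb Z^2$, with generators $\mathcal O_{H_{n,d}}(1)\boxtimes \mathcal O$ and $\mathcal O \boxtimes \mathcal O_{H_{n,1}}(1)$. (This is already recorded for the $G$-equivariant Picard group via \cite{Gallardo-JMG-framework}; here we only need the ordinary one.)

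Next I would transfer this computation to the open set $\mathcal U$. Since $H_{n,d}\times H_{n,1}$ is smooth (it is a product of projective spaces, hence nonsingular), and $\mathcal U$ is its complement of the closed set $Z_1\cup Z_2$ of codimension $\geqslant 2$, the restriction map on Weil-divisor class groups is an isomorphism: removing a closed subset of codimension at least two removes no prime divisors and imposes no new relations. Because the ambient space is smooth, Weil and Cartier divisors coincide and $\mathrm{Cl}=\mathrm{Pic}$, so the restriction $i^*\colon \mathrm{Pic}(H_{n,d}\times H_{n,1})\to \mathrm{Pic}(\mathcal U)$ is an isomorphism. Concretely, any line bundle on $\mathcal U$ extends uniquely to the smooth ambient space by taking the closure of divisors, and the extension is compatible with pullback, which yields the identification $\mathcal L\cong \mathcal O_{\mathcal U}(a,b)=i^*(\mathcal O_{H_{n,d}}(a)\boxtimes \mathcal O_{H_{n,1}}(b))|_{\mathcal U}$.

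Finally, for the ampleness statement I would argue in both directions. If $a>0$ and $b>0$, then $\mathcal O_{H_{n,d}}(a)\boxtimes\mathcal O_{H_{n,1}}(b)$ is ample on the product $H_{n,d}\times H_{n,1}$ (an exterior tensor product of ample bundles on each factor is ample on a product), and ampleness restricts to the open subscheme $\mathcal U$, so $\mathcal O_{\mathcal U}(a,b)$ is ample. Conversely, if $\mathcal O_{\mathcal U}(a,b)$ is ample then its restriction to any complete curve in $\mathcal U$ has positive degree; choosing curves that are fibres of $p_1$ and of $p_2$ lying in $\mathcal U$ (which exist since the complement has codimension $\geqslant 2$, hence meets a general such line in at most a proper closed subset) forces $b>0$ and $a>0$ respectively by computing intersection numbers against the two generators. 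The main subtlety is the middle step: one must be careful that $Z_1\cup Z_2$ genuinely has codimension $\geqslant 2$ (supplied by the preceding lemmas) and that the ambient product is smooth so that the codimension-two excision applies to $\mathrm{Pic}$ and not merely to $\mathrm{Cl}$; everything else is a standard property of products of projective spaces and of restriction of ample bundles to open subsets.
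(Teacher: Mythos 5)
Your proposal is correct and takes essentially the same route as the paper: the paper's proof consists of noting that the complement of $\mathcal U$ has codimension at least two by Lemmas \ref{lemma:Z1-dim} and \ref{lemma:Z2-dim} and invoking codimension-two excision for divisor class groups of a smooth variety (\cite[Proposition II.6.5b]{hartshorneAG}), which is exactly your middle step. You additionally spell out the ampleness equivalence, which the paper leaves implicit; that part is also fine, up to the minor rephrasing that one wants a general line contained in a general fibre of $p_1$ or $p_2$ to miss $Z_1\cup Z_2$ entirely (a dimension count using $\mathrm{codim}(Z_1\cup Z_2)\geq 2$), rather than to meet it in a proper closed subset.
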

	\begin{proof}
		The codimension of $\mathcal U$ in $H_{n,d}\times H_{n,1}$ is at least $2$, by lemmas \ref{lemma:Z1-dim} and \ref{lemma:Z2-dim}. The result follows from \cite[Proposition II.6.5b]{hartshorneAG}.
	\end{proof}
	
	Notice that if $d\leqslant n+1$, we have that $-K_{\mathcal X/\mathcal U}$ is a relatively very ample line bundle. It follows from Lemma \ref{lemma:picard-isom} and Theorem \ref{theorem:CY-Fano} that $\Lambda_{CM, \beta}(-K_{\mathcal X/\mathcal U})\cong \mathcal O(a,b)$. Hence, by Lemma \ref{lemma:picard-isom}, we can extend this line bundle uniquely to the whole of $H_{n,d}\times H_{n,1}$. By a slight abuse of notation we will denote that extension by $-K_{\mathcal X/\mathcal U}$.
	Let $\beta\in (0,1)\cap \mathbb Q$. Define:
	$$\Lambda_{CM, \beta}\coloneqq\Lambda_{CM, \beta}(-K_{\mathcal X/\mathcal U})\coloneqq\Lambda_{CM, \beta}(\mathcal X, \mathcal D,-K_{\mathcal X/\mathcal U}).$$

	\begin{theorem}
		\label{theorem:git-cm-correspondence}
		Let $d\leqslant n+1$, $\beta\in (0,1]\cap \mathbb Q$. Then $\Lambda_{CM, \beta}(-K_{\mathcal X/\mathcal U})\cong \mathcal O(a(\beta),b(\beta))$. where
		\begin{align*}
		a(\beta)&=(n+2-d)^{n-1}\Bigg[(n+2-d)\Big((n+2)(d-1)(1+n(1-\beta))+(1-\beta)(n+1)\Big),\\
		&\qquad\qquad\qquad\qquad\qquad\qquad-nd(1-\beta)(n+1)\Bigg]>0, \\
		b(\beta)&=(n+2-d)^n d(n+1)(1-\beta)>0,\\
		\end{align*}
		and hence $\Lambda_{CM, \beta}$ is ample. 
		
		In particular, for $d=n+1$, and $t=\frac{b}{a}$, we have that:
		\begin{align*}
		t(\beta)=\frac{d^2(1 - \beta)} {
			d^2-\beta
		}
		\end{align*}
	\end{theorem}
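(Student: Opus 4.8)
The plan is to determine $a(\beta),b(\beta)$ by computing the class $c_1(\Lambda_{CM,\beta})$ directly in $\mathrm{Pic}(\mathcal U)\otimes\mathbb Q$. Lemma \ref{lemma:picard-isom} already tells us $\Lambda_{CM,\beta}\cong\mathcal O(a,b)$, so it suffices to read off the coefficients of the two generators $H_1\coloneqq p_1^*c_1(\mathcal O(1))$ and $H_2\coloneqq p_2^*c_1(\mathcal O(1))$. Since every fibre of $\pi\colon\mathcal X\to\mathcal U$ is a hypersurface, hence a local complete intersection, Grothendieck--Riemann--Roch applies and I may use the intersection-theoretic formula of Theorem \ref{theorem:CM-chern-class}, evaluated on the universal family, to express $c_1(\Lambda_{CM,\beta})$ as a $\mathbb Q$-combination of $H_1$ and $H_2$.

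The geometric input consists in writing the relevant classes on $\mathcal X\subset\mathbb P^{n+1}\times\mathcal U$ in terms of $H_1,H_2$ and the hyperplane class $h=c_1(\mathcal O_{\mathbb P^{n+1}}(1))$: the universal hypersurface has class $[\mathcal X]=dh+H_1$, the universal divisor is $\mathcal D=(h+H_2)|_{\mathcal X}$ by \eqref{eq:UnivDiv}, and adjunction gives $\mathcal L=-K_{\mathcal X/\mathcal U}=\bigl((n+2-d)h-H_1\bigr)|_{\mathcal X}$, the horizontal term $-H_1$ arising from $K_{\mathbb P^{n+1}\times\mathcal U/\mathcal U}+[\mathcal X]$. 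Restricted to a fibre the slopes are $\mu(L)=1$ and $\mu(L,D)=\tfrac1{n+2-d}$; computing $\mu(L,D)$ correctly --- in particular retaining the factor $\tfrac1{n+2-d}$ coming from $D\in|\mathcal O_X(1)|$ rather than $|{-}K_X|$ when $d<n+1$ --- is the most delicate bookkeeping point. I would then push forward along $\pi$ via the projection formula, $\pi_*(\gamma|_{\mathcal X})=q_*(\gamma\cdot[\mathcal X])$ for $q\colon\mathbb P^{n+1}\times\mathcal U\to\mathcal U$, which amounts to extracting the coefficient of $h^{n+1}$ after reducing modulo $h^{n+2}=0$ and keeping only terms linear in $H_1,H_2$. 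A direct computation yields
\[
\pi_*c_1(\mathcal L)^{n+1}=(n+2-d)^n(n+2)(1-d)\,H_1,
\]
\[
\pi_*\bigl(c_1(\mathcal L)^n\cdot\mathcal D\bigr)=(n+2-d)^{n-1}\bigl(n+2-d(n+1)\bigr)H_1+(n+2-d)^n d\,H_2.
\]

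Substituting these into Theorem \ref{theorem:CM-chern-class}, and using $c_1(K_{\mathcal X/\mathcal U})=-c_1(\mathcal L)$ so that the $\mu(L)$-term and the $K_{\mathcal X/\mathcal U}$-term combine into $-\pi_*c_1(\mathcal L)^{n+1}$, I collect the $H_1$- and $H_2$-coefficients to obtain the asserted $a(\beta)$ and $b(\beta)$; the $H_2$-coefficient comes only from the second pushforward and gives $b(\beta)=(n+2-d)^n d(n+1)(1-\beta)$ at once. Positivity is then immediate: $b(\beta)>0$ because $2\leqslant d\leqslant n+1$ forces $n+2-d\geqslant1$ and $\beta<1$, while $a(\beta)$ splits as a $\beta$-independent positive multiple of $(n+2)(d-1)$ plus a positive multiple of $(1-\beta)$, so $a(\beta)>0$; ampleness of $\Lambda_{CM,\beta}$ then follows from Lemma \ref{lemma:picard-isom}. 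Finally, specialising to $d=n+1$ collapses every power of $(n+2-d)$ to $1$ and, rewriting in terms of $d$, gives $a=d^2-\beta$ and $b=d^2(1-\beta)$, whence $t(\beta)=\tfrac ba=\tfrac{d^2(1-\beta)}{d^2-\beta}$. The genuine obstacle is the intersection-theoretic computation of the middle paragraph --- correctly tracking the horizontal $H_1$-contribution of $-K_{\mathcal X/\mathcal U}$ and the fibre slope $\mu(L,D)$ --- since once the two pushforwards are in hand the positivity statement and the value of $t(\beta)$ follow by elementary algebra.
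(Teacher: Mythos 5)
Your proposal is correct in method and reaches the right answer, but by a genuinely different computational route. The paper also reduces everything to Theorem \ref{theorem:CM-chern-class}, but instead of computing $c_1(\Lambda_{CM,\beta})$ globally on $\mathcal U$ it evaluates the degree of $\Lambda_{CM,\beta}$ on two test curves: a Lefschetz pencil of hypersurfaces with a fixed hyperplane (realised as a bidegree-$(1,d)$ hypersurface in $\mathbb P^1\times\mathbb P^{n+1}$), which isolates $a$, and a fixed hypersurface with a pencil of hyperplanes (a trivial fibration, so only the $\mathcal D$-term survives), which isolates $b$. Your single global computation on $\mathcal X\subset\mathbb P^{n+1}\times\mathcal U$, expanding in $h,H_1,H_2$ modulo $h^{n+2}$ and extracting the $h^{n+1}$-coefficient, packages both degrees at once; your two pushforwards $\pi_*c_1(\mathcal L)^{n+1}=(n+2-d)^n(n+2)(1-d)H_1$ and $\pi_*(c_1(\mathcal L)^n\cdot\mathcal D)=(n+2-d)^{n-1}(n+2-d(n+1))H_1+(n+2-d)^nd\,H_2$ agree exactly with the intersection numbers the paper obtains on its two pencils, so the approaches are computationally equivalent and your version is arguably cleaner (it avoids the need to check, via Corollary \ref{corollary:codimension-reducible-hypersurfaces}, that a general pencil avoids $Z_1\cup Z_2$).

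One substantive point deserves attention. You are right that for $d<n+1$ the fibre slope is $\mu(L,D)=\tfrac1{n+2-d}$, so the coefficient of $\pi_*c_1(\mathcal L)^{n+1}$ in Theorem \ref{theorem:CM-chern-class} is $-\bigl(1+\tfrac{n(1-\beta)}{n+2-d}\bigr)$, not $-\bigl(1+n(1-\beta)\bigr)$ as in the paper's equation \eqref{eq:computing-a}; the latter implicitly uses $\mu(L,D)=1$, which only holds when $d=n+1$. Carrying your (correct) slope through the algebra gives
\begin{equation*}
a(\beta)=(n+2-d)^{n}\Bigl[(n+2)(d-1)+(1-\beta)\Bigr],
\end{equation*}
which differs from the displayed $a(\beta)$ in the statement for $d<n+1$ (the $\beta$-dependent part of the bracket disagrees by a factor of $n+2-d$ in one term), though both are visibly positive. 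So your claim that your substitution ``obtains the asserted $a(\beta)$'' is not literally accurate for $d<n+1$; rather, your computation corrects it. For $d=n+1$ the two expressions coincide and both reduce to $a=d^2-\beta$, $b=d^2(1-\beta)$, so the formula for $t(\beta)$ and all downstream applications (cubic surfaces, Theorem \ref{theorem:main-cubicsurfaces}) are unaffected. The positivity and ampleness arguments, and the specialisation at $d=n+1$, are fine as you present them (note only that at the endpoint $\beta=1$ one has $b=0$, a boundary defect already present in the statement).
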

	\begin{proof}
		This theorem is a direct consequence of applying Theorem \ref{theorem:CM-chern-class} to two particular type of pencils. To determine the value of $a$, we use a pencil of log pairs where the hypersurfaces vary while the hyperplane section is fixed. To determine the value of $b$, we use a pencil of log pairs where the hypersurface is fixed while the hyperplane sections vary.  Next, we consider each case in detail.
		
		Let $X_1$ and $X_2$ be two different very general hypersurfaces of degree $d$ in $\PP^{n+1}$ and let its base locus be $C:=X_1 \cap X_2$. By Corollary \ref{corollary:codimension-reducible-hypersurfaces}, we may assume that all the elements in the pencil are irreducible and hence the intersection of a fibre with an element of the pencil determines a hyperplane section.
		
		Then,  we construct a pencil 
		of hypersurfaces of degree $d$ in $\PP^{n+1}$ by taking 
		$Bl_{C}\PP^{n+1} \to \PP^1$. 
		We denote this pencil as $\mathcal Y/\mathbb \PP^1\subset \mathrm{Bl}_C\mathbb P^{n+1}/\mathbb{P}^1$ and let $\pi_{\mathcal Y}\colon \mathcal Y \rightarrow \mathbb P^1$ be the natural projection. Notice that we have a commutative diagram
		\begin{equation*}
		\xymatrix{
			{\mathcal Y} \ar[d]^{\pi} \ar@{^{(}->}[r]^(.5){i} &\mathcal X\ar[d]^(.25){\pi} & &\\
			{\mathbb P^1}\ar@{^{(}->}[r]^(.5){i} &   {\mathcal U} \ar@{^{(}->}[r]^(.25){j}& {H_{n,d}\times H_{n,1}}  \ar[d]^{\pi_{1}} \ar[r]^(.5){\pi_{2}} & H_{n,1}\cong \mathbb P^{n+1} \\
			&									&	 H_{n,d}\cong\mathbb P^N. &
		}
		\end{equation*}
		
		Let $p_{H_0}$ be a point in $H_{n,1}$ parametrizing a general hyperplane $H_0$.
		Then \eqref{eq:UniverCur} implies that 
		$$
		\mathcal Y \times p_{H_0} \subset \PP^1 \times \PP^{n+1} \times H_{n,1}.
		$$
		In particular, $\mathcal Y$  is a hypersurface of bidgree $(1,d)$ in $\PP^1 \times \PP^{n+1} $.
		We are interested in constructing a family $(\mathcal Y, \mathcal D)$ of log pairs formed by a fibre of $\pi_{\mathcal Y}$ and the divisor obtained by intersecting each fibre with $H_0$. Hence
		$$\mathcal{D} = \{r\in \mathbb P^1\times\mathbb P^{n+1} \ | \ r\in \mathcal Y \text{ and } r|_{\mathbb P^{n+1}}\in H_0\}$$
		is the complete intersection of two hypersurfaces of degree $(1,d)$ and $(0,1)$ in $\PP^1 \times \PP^{n+1}$. Let $\pi_{\mathbb P^1}\colon \mathbb P^1\times\mathbb P^{n+1}\rightarrow \mathbb P^{1}$ and $\pi_{\mathbb P^{n+1}}\colon \mathbb P^1\times\mathbb P^{n+1}\rightarrow \mathbb P^{n+1}$ be the natural projections.  Let $H_{\mathbb P^{n+1}}=\pi^*_{\mathbb P^{n+1}}(\mathcal O_{\mathbb P^{n+1}}(1))$ and $H_{\mathbb P^{1}}=\pi^*_{\mathbb P^{1}}(\mathcal O_{\mathbb P^{1}}(1))$. By adjunction
		\begin{align*}
		K_{\mathcal Y} &= \left( K_{\PP^1 \times \PP^{n+1}} + Bl_{C}\PP^{n+1} \right)
		|_{\mathcal Y }
		\\
		&=
		\left( -2H_{\PP^1}-(n+2)H_{\PP^{n+1}} + H_{\PP^1}+dH_{\PP^{n+1}} \right)
		|_{ \mathcal Y } 
		\\
		&=
		\left( -H_{\PP^1}-(n+2-d)H_{\PP^{n+1}} \right)|_{ \mathcal Y }.
		\end{align*}
		from which we deduce 
		\begin{align*}
		K_{\mathcal Y / \mathbb P^1} 
		&= K_{\mathcal Y}  - \pi^* K_{\mathbb P^1}   
		\\
		&=\left( -H_{\PP^1}|_{\mathcal Y}-(n+2-d)H_{\PP^{n+1}}|_{\mathcal Y }\right)+2H_{\PP^1}|_{\mathcal Y}
		\\
		&=H_{\PP^1}|_{\mathcal Y}-(n+2-d)H_{\PP^{n+1}}|_{\mathcal Y },
		\end{align*}
		The expression
		\begin{align*}
		(- K_{\mathcal Y / \mathbb P^1} )^n 
		&=
		\left( -H_{\PP^1}|_{\mathcal Y}+(n+2-d)^{n-1}H_{\PP^{n+1}} |_{\mathcal Y } \right)^n
		\\
		&=
		-n(n+2-d)H_{\PP^1}|_{\mathcal Y}\cdot H_{\PP^{n+1}}^{n-1} |_{\mathcal Y }+
		(n+2-d)^nH_{\PP^{n+1}}^n |_{\mathcal Y }
		\end{align*}
		implies that $c_1(- K_{\mathcal Y / \mathbb P^1} )^n \cdot \mathcal D $ is equal to
		\begin{align*}
		\left(
		-n(n+2-d)^{n-1}H_{\PP^1}\cdot H_{\PP^{n+1}}^{n-1} +
		(n+2-d)^nH_{\PP^{n+1}}^n 
		\right)
		\cdot
		(H_{\PP^1} +dH_{\PP^{n+1}})  \cdot H_{\PP^{n+1}}
		\end{align*}
		from which we obtain
		\begin{align*}
		c_1(- K_{\mathcal Y / \mathbb P^1} )^n \cdot \mathcal D
		&= 
		\left( (n+2-d)^n-nd(n+2-d)^{n-1} \right)
		H_{\PP^{1}} H^{n+1}_{\PP^{n+1}}.
		\end{align*}
		Similarly, we obtain 
		\begin{align*}
		(- K_{\mathcal Y / \mathbb P^1} )^{n+1}
		=
		-(n+1)(n+2-d)^nH_{\PP^{1}}\cdot H^{n}_{\PP^{n+1}}|_{\mathcal Y}+
		(n+2-d)^{n+1}H^{n+1}_{\PP^{n+1}}|_{\mathcal Y},
		\end{align*}
		which implies
		\begin{align*}
		c_1(- K_{\mathcal Y / \mathbb P^1} )^{n+1}
		&=
		(-(n+1)(n+2-d)^nH_{\PP^{1}}\cdot  H^{n}_{\PP^{n+1}}+
		(n+2-d)^{n+1}H^{n+1}_{\PP^{n+1}})|_{\mathcal Y}
		\\
		&=
		\left(
		-(n+1)(n+2-d)^nH_{\PP^{1}} \cdot H^{n}_{\PP^{n+1}}+
		(n+2-d)^{n+1}H^{n+1}_{\PP^{n+1}}
		\right)
		\cdot
		\left(
		H_{\PP^{1}} +dH_{\PP^{n+1}}
		\right)
		\\
		&=
		\left(
		-d(n+1)(n+2-d)^n+
		(n+2-d)^{n+1}
		\right)
		H_{\PP^{1}}\cdot  H^{n+1}_{\PP^{n+1}}
		\\
		&=
		(n+2-d)^n
		(n+2)(1-d)
		H_{\PP^{1}} \cdot H^{n+1}_{\PP^{n+1}}.
		\end{align*}
		By Theorem \ref{theorem:CM-chern-class}, we deduce that
		\begin{equation}
		\label{eq:computing-a}
		\deg(i^*\Lambda_{CM,\beta}) =-(1+n(1-\beta)) \pi_*\left(c_1\left(-K_{\mathcal Y/\mathcal B}\right)^{n+1} \right)
		+( 1-\beta ) \left(n+1\right) \pi_* \left( c_1 \left(-K_{\mathcal Y/\mathcal B}\right)^{n} 
		\cdot \mathcal D  
		\right).
		\end{equation}
		Observe that 
		$$\deg((j\circ i)^*(\Lambda_{CM,\beta}))=\deg((j\circ i)^*\circ\pi_1^*(\mathcal O_{\mathbb P^{N}}(a)))=a.$$
		Hence, substituting the values in \eqref{eq:computing-a}, we obtain the value of $a$.
		
		Next,  we calculate $b$.
		For that purpose, we consider a pencil of pairs induced by a general smooth hypersurface $S\subset \mathbb P^{n+1}$, represented by $p_S\in H_{n,d}$  and a pencil of  hyperplanes  $H(t)$  with $t \in \PP^1$. 
		We claim that 
		$$
		\mathcal D|_{p_S \times \PP^1}
		=
		(f_S=f_{H(t)}=0)
		\subset
		p_S \times \mathbb{P}^3 \times \mathbb{P}^1
		$$
		where $f_S$ and $f_{H(t)}$ are the equations of the hypersurface $S$ and the pencil of hyperplanes.
		$\mathcal D|_{p_S \times \PP^1}$ is a complete intersection in $\PP^{n+1} \times \PP^1$ of two hypersurfaces of bi-degree  $(d,0)$ and $(1,1)$. Because $S\times \mathbb P^1/\mathbb P^1$ is a trivial fibration, we have that $c_1(- K_{S\times \mathbb P^1 / \mathbb P^1} )^{n+1}=0$.
		
		Then 
		\begin{align*}
		K_{(S \times \PP^1) / \PP^1} = K_{S \times \PP^1} - \pi^* K_{\PP^1} 
		=  
		K_{S} \otimes \mathcal O_{\PP^1}
		=
		(d-n-2)H_{\mathbb P^{n+1}}|_S \otimes \mathcal O_{\PP^1}
		\end{align*} 
		which implies
		\begin{align*}
		\deg(\Lambda_{CM,\beta}) 
		&=
		-(1-\beta)(n+1)\left( K_{(S \times \PP^1) / \PP^1}  \right)^n \cdot \mathcal D
		\\
		&=-
		(n+1)(1 - \beta)(d-n-2)^n\left(  H_{\PP^{n+1}} \right)^n \cdot dH_{\PP^{n+1}}  
		\cdot (H_{\PP^{n+1}}+H_{\PP^{1}}) 
		\\
		&=
		-(n+1)(1 - \beta)(d-n-2)^n\left(  H_{\PP^{n+1}} \right)^n 
		\cdot (dH_{\PP^{n+1}}H_{\PP^{1}}) 
		\\
		&=
		-d(n+1)(1 - \beta)(d-n-2)^n\left(  H_{\PP^{n+1}}^{n+1}\cdot H_{\PP^{1}}  \right)^n
		\end{align*}
		This implies that 
		$$
		b= (d-n-2)^nd(n+1)(1 - \beta))
		$$
	\end{proof}

\begin{rmk}
	\label{remark:special-OPS}
		Cubic surfaces with only $A_1$ (or $A_2$) singularities play an important role in the classical GIT quotient first considered by Hilbert, since they are precisely the (semi-)stable ones. Not surprisingly, they also play a similar role in the GIT quotient of log pairs in the first polarisation chamber, with little added complexity provided by the boundary. In Proposition \ref{prop:sss_A2} below we summarise their GIT$_t$ stability for all values of $t$.
	
	In addition, for those log pairs which are GIT$_t$-unstable, we provide a one-parameter subgroup $\lambda_*$ such that the limit of the log pair (which in principle is only a class representing a cubic polynomial and a monomial) is also a log pair. That such a one-parameter subgroup exists is not at all obvious. Let us illustrate this. Recall that the scheme $H_{n,d}\times H_{n,1}$ represents pairs of homogeneous polynomials, not necessarily log pairs, e.g. all pairs $(lq_2,l)$ (where $q_2$ is a homogeneous polynomial of degree $2$) do not represent a log pair. In principle, for a given GIT$_t$-unstable pair $(S,D)$ defined by polynomials $(p,l)$, it could happen that for all one-parameter subgroups $\lambda$ which \emph{destabilise} $(S,D)$, the pair $(\overline p, \overline l)=\lim_{t \to 0}(p,l)$ does not represent a log pair or, equivalently, $\overline l$ divides $\overline p$. If we can find a one-parameter subgroup $\lambda_*$ such that $(\overline p, \overline l)$ represents a log pair then we can use its orbit to induce a destabilising test configuration, as we do later in Theorem \ref{theorem:K-stability-implies-GIT}. In fact, this is precisely what we use Proposition  \ref{prop:sss_A2} for in the proof of Theorem \ref{theorem:K-stability-implies-GIT} (i). We should remark that this is the price to pay when cooking up a GIT quotient for log pairs which is easy to characterise geometrically.
	
	One may ask why we do not include a similar statement to Proposition for all other GIT$_t$-unstable pairs. Probably we can find a similar statement for cubic surfaces whose singularities are worse than $A_2$ (at least for most of them). However the proof would have to be specific to each case and since we do not need such a general result, we do not include it. 
	
\end{rmk}
\begin{prop}
		\label{prop:sss_A2}
		Let $S$ be a cubic surface which is either smooth or has $\boldsymbol{A}_1$ or $\boldsymbol{A}_2$ singularities, $D\in |-K_S|$ and $t\in (0,1)$. The pair $(S, D)$ is 
		\begin{enumerate}[(i)]
			\item GIT$_t$-stable if and only if the support of $D$ does not contain any surface singularity of $S$ of type $\boldsymbol{A}_2$,
			\item GIT$_t$-polystable but not GIT$_t$-stable if and only if $S=S_*$ is the unique surface with three $\boldsymbol{A}_2$ singularities and $D=D_*$ is the unique hyperplane section in $S_*$ consisting of three lines, each passing through two of the singularities,
			\item GIT$_t$-semistable but not GIT$_t$-polystable if and only if $S$ has at least an $\boldsymbol{A}_2$ singularity $p$ such that $p\in \mathrm{Supp}(D)$ and $D$ has normal crossings at $p$ (i.e. $D$ is reduced and has an $\boldsymbol{A}_1$ singularity at $p$),
			\item GIT$_t$-unstable if and only if $S$ has at least one singularity $p$ of type $\boldsymbol{A}_2$ such that $D$ is either non-reduced or has a cuspidal singularity at $p$.
		\end{enumerate}
	Moreover, there is a destabilising one-parameter subgroup $\lambda_*$ such that if $(S,D)$ is GIT$_t$-unstable the natural morphism $\pi\colon \overline{\lambda_* \cdot S }\subset \mathbb P^3\times\mathbb P^1\rightarrow \mathbb P^1$ has an irreducible fibre at $0\in \mathbb P^1$. Furthermore, if $(S,D)$ is GIT$_t$-semistable but not GIT$_t$-polystable, then $\lim_{t \to 0}\lambda\cdot (S,D)=(S_*,D_*)$.
	\end{prop}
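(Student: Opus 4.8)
The plan is to decide GIT$_t$-stability through the Hilbert--Mumford numerical criterion for the $\mathrm{SL}(4)$-action on $H_{2,3}\times H_{2,1}$ polarised by $\mathcal O(a,b)$, $t=\tfrac{b}{a}$, using the weight function $\mu_t(p,l,\lambda)$ of \cite[Lemma 2.2]{Gallardo-JMG-framework}. In the sign convention of that reference, already used in Lemma \ref{lemma:stabZ1Z2}, a one-parameter subgroup $\lambda$ destabilises $(S,D)$ precisely when $\mu_t(p,l,\lambda)>0$, the pair is GIT$_t$-semistable when $\mu_t\le 0$ for all $\lambda$, and GIT$_t$-stable when $\mu_t<0$ for every nontrivial $\lambda$. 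Since $t_{2,3}=1$, the whole interval $(0,1)$ is relevant; one verifies that the mild-singularity pairs below retain a constant stability type across $(0,1)$, so that the walls of \cite{Gallardo-JMG-cubic-surfaces} do not separate them and every estimate can be run with $t$ a free parameter. After diagonalising, it suffices to test diagonal subgroups $\lambda=\operatorname{diag}(s^{w_0},\dots,s^{w_3})$ with $\sum_i w_i=0$, in coordinates adapted to the singular points of $S$ and to $D$.

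First I would fix a normal form at an $\boldsymbol A_2$ point. Placing it at $p=[0:0:0:1]$, a cubic with an $\boldsymbol A_2$ singularity there can be written $f=x_0x_1x_3+f_3(x_0,x_1,x_2)$ with the coefficient of $x_2^3$ in $f_3$ nonzero, so that the tangent cone is $\{x_0x_1=0\}$ and $x_2$ is the cuspidal direction; an $\boldsymbol A_1$ point instead has a rank-three tangent cone and, as in the classical Hilbert quotient, never contributes to instability, which is why only the $\boldsymbol A_2$ points enter the statement. The condition $p\in\operatorname{Supp}(D)$ becomes the vanishing of the $x_3$-coefficient of $l$, i.e.\ $l=l(x_0,x_1,x_2)$, and the local type of $D=S\cap H$ at $p$ is governed by the position of $H$: transversality to the cusp direction (nonzero $x_2$-coefficient of $l$) gives a node of $D$, tangency gives a cusp, and containment of a tangent-cone branch gives a non-reduced $D$. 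With these normal forms the four statements reduce to a finite analysis indexed by (a) whether $p\in\operatorname{Supp}(D)$ and (b) this local type at each $\boldsymbol A_2$ point: in the unstable and strictly semistable configurations I would exhibit an explicit diagonal $\lambda$ giving $\mu_t>0$, respectively $\mu_t=0$, while for the stable configurations I would bound $\mu_t(p,l,\lambda)<0$ uniformly over all $\lambda$, which is a convexity estimate on the weight polytope of the monomials of $f$ and $l$. For this exhaustive ``stable direction'' it is most efficient to import the geometric description of the GIT$_t$-loci already established in \cite{Gallardo-JMG-cubic-surfaces} and translate it into (i)--(iv), Theorem \ref{theorem:CY-Fano} and Lemma \ref{lemma:stabZ1Z2} having already excised the degenerate loci where $H\subseteq\operatorname{Supp}(S)$ or $D=S\cap H'$ for a second hyperplane.

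For the polystable representative in (ii) I would observe that $(S_*,D_*)=\big(\{x_0x_1x_3+x_2^3=0\},\{x_2=0\}\big)$ is preserved by the two-dimensional torus $\operatorname{diag}(s^{a_0},s^{a_1},1,s^{a_3})$ with $a_0+a_1+a_3=0$; hence its orbit is closed and the pair is GIT$_t$-polystable but not stable for every $t\in(0,1)$. Uniqueness follows because the three-$\boldsymbol A_2$ cubic is unique up to projective equivalence and the triangle of lines through its three singular points is cut by the unique torus-invariant hyperplane, so any closed orbit in the strictly semistable mild-singularity locus coincides with it. The final degeneration claim is then verified by computing $\lim_{s\to0}\lambda_*\cdot(S,D)$ for a strictly semistable pair along the subgroup produced next.

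The hard part, flagged in Remark \ref{remark:special-OPS}, is the ``Moreover'': among the subgroups destabilising a GIT$_t$-unstable pair I must single out a $\lambda_*$ for which the limit $(\overline p,\overline l)=\lim_{s\to0}\lambda_*\cdot(p,l)$ still represents a genuine log pair, i.e.\ the central fibre of $\overline{\lambda_*\cdot S}\subset\mathbb P^3\times\mathbb P^1\to\mathbb P^1$ is irreducible and $\overline l\nmid\overline p$; a careless destabilising choice typically forces the limiting cubic to absorb $H$, which is useless for the test configuration built in Theorem \ref{theorem:K-stability-implies-GIT}. The plan is to take $\lambda_*=\operatorname{diag}(s^{w_0},s^{w_1},1,s^{-w_0-w_1})$ with $w_0,w_1>0$, chosen in the interior of the destabilising cone so that exactly the monomials $x_0x_1x_3$ and $x_2^3$ are of minimal weight and survive; the central cubic is then $x_0x_1x_3+x_2^3$, which is irreducible (being linear in $x_3$ with leading coefficient $x_0x_1$ coprime to the remainder $x_2^3$) and projectively equal to $S_*$. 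The same $\lambda_*$ sends $\overline l$ to $x_2$ in the nodal (strictly semistable) case, producing $D_*=S_*\cap\{x_2=0\}$ with $\mu_t=0$, and to $x_0$ or $x_1$ in the cuspidal or non-reduced case, where $\overline l\nmid\overline p$ and $\mu_t>0$, giving the irreducible central fibre. Establishing these two behaviours, and checking the relevant sign of $\mu_t$ for all $t\in(0,1)$ from the explicit weight formula, is where the genuine bookkeeping lies.
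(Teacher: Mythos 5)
Your proposal is correct and follows essentially the same route as the paper: parts (i)--(ii) are imported from the cited GIT classification of \cite{Gallardo-JMG-cubic-surfaces}, the normal form $f=x_0x_1x_3+x_2^3+\cdots$, $h=h_0x_0+h_1x_1+h_2x_2$ at an $\boldsymbol{A}_2$ point of $\mathrm{Supp}(D)$ reduces (iii)--(iv) to whether $h_2\neq 0$ (node, strictly semistable via $\mu_t=0$) or $h_2=0$ (cusp/non-reduced, unstable via $\mu_t=t>0$), and your $\lambda_*=\mathrm{diag}(s^{w_0},s^{w_1},1,s^{-w_0-w_1})$ is exactly the paper's $\mathrm{Diag}(s,s,1,s^{-2})$, with the same irreducible central fibre $x_0x_1x_3+x_2^3=S_*$ and limit divisor $D_*=S_*\cap\{x_2=0\}$. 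The only cosmetic slip is that when $h_2=0$ the limit of $l$ is the general form $h_0x_0+h_1x_1$ rather than literally $x_0$ or $x_1$, which does not affect the conclusion that $\overline{l}\nmid\overline{p}$.
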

	\begin{proof}
		Parts (i) and (ii) follow from \cite[Theorems 1.3 and 1.4]{Gallardo-JMG-cubic-surfaces}. Hence, we may assume that $S$ has at least one $\boldsymbol{A}_2$ singularity. By \cite[p. 255]{classificationcubics} $S$ is irreducible. Let $S$ be defined by the equation $f=\sum f_Ix^I$, where $f_I\in \mathbb C$ and $I\in \mathbb Z^4$ runs over all partitions of $3$ of four non-negative integers. Let $D=S\cap H$, where $H$ is defined by the equation $h=\sum_{i=0}^3h_ix_i$ with $h_i\in \mathbb C$.

		Given a one-parameter subgroup $\lambda\colon \mathbb G_m\rightarrow G\coloneqq \mathrm{SL}(4,\mathbb C)$, we say that $\lambda$ is \emph{normalised} if $\lambda=\mathrm{Diag}(s^a)$, where $a\in \mathbb Z^4$, $a=(a_0,a_1,a_2,a_3)$, $a_0\geqslant a_1\geqslant a_2\geqslant a_3$, $\sum_{i=0}^3 a_i=0$. We have a natural pairing between normalised one-parameter subgroups and monomials $x^I=x_0^{d_0}x_1^{d_1}x_2^{d_2}x_3d^3$ given by $\langle x^I,\lambda\rangle=\sum_{i=0}^{3}a_id_i$. Given a normalised one-parameter subgroup $\lambda=\mathrm{Diag}(s^a)$, $a=(a_0,a_1,a_2,a_3)$, and if $j=\sup\{0,1,2,3|h_i\neq 0\}$, we define the functions
		\begin{equation}
		\label{eq:HM-function-def}
		\langle f, \lambda\rangle \min\{\langle x^I,\lambda\rangle \ : \ f_I\neq 0\}, \qquad \langle h, \lambda\rangle = \min \{a_i\ : \ h_i\neq 0\}=a_j.
		\end{equation}
	There is a natural order of monomials of fixed degree (sometimes known as \emph{Mukai's order} \cite{Gallardo-JMG-framework}). Namely, $x^I\leqslant x^J$ if and only if for each one-parameter subgroup $\lambda$, $\langle x^I,\lambda\rangle\leqslant \langle x^J,\lambda\rangle$. Therefore, it is enough to test the pairing $\langle f, \lambda\rangle$ for the set of minimal monomials with non-zero coefficients in $f$.
	
	By \cite[p. 255]{classificationcubics}, if $S$ has an $\boldsymbol{A}_2$ singularity, then it has at most two other singularities, which are $\boldsymbol{A}_1$ or $\boldsymbol{A}_2$ singularities. Moreover, by \cite[Lemma 3]{classificationcubics} (c.f. \cite[Lemma 4.2]{Gallardo-JMG-cubic-surfaces}), $S$ has a $\boldsymbol{A}_2$ singularity at a point $p\in\mathrm{Supp}(D)$ if and only if $(S,D=S\cap H)$ is conjugate by an element of $\mathrm{Aut}(\mathbb P^3)$ to a pair defined by equations
	\begin{align}
		f&=x_0x_1x_3+x_2^3+x_2^2f_1(x_0:x_1)+x_2f_2(x_0:x_1)+f_3(x_0:x_1),
		\label{eq:a2-equations}\\
		h&=h_0x_0+h_1x_1+h_2x_2,\nonumber
	\end{align}
	where $f_d$ are homogeneous polynomials of degree $d$ and $p$ is conjugate to $(0:0:0:1)$. For the rest of the proof we will therefore assume that $(S,D)$ is given by \eqref{eq:a2-equations} and that $S$ is singular at $p\coloneqq(0:0:0:1)\in \mathrm{Supp}(D)$. The Mukai order of monomials of degree $3$ in four variables is described in \cite[Figure 7.3]{mukai-book-moduli}, from which we can deduce that the minimal monomials which have non-zero coefficient in $f$ are $x_0x_1x_3$ and $x_2^3$. Hence, if $\lambda =\mathrm{Diag}(s^a)$, $a=(a_0,a_1,a_2,a_3)$, then
	\begin{equation}
	\label{eq:a2-HM-function}
		\langle f, \lambda\rangle = \min\{a_0+a_1+a_3, 3a_2\}.
	\end{equation}
	
	In \cite{Gallardo-JMG-framework} a finite set of one-parameter subgroups $S_{n,d}$ was introduced. This set determines the GIT$_t$-stability of any pair $(X, D=X\cap H)$ of dimension $n$ and degree $d$. In \cite[Lemma 2.1]{Gallardo-JMG-cubic-surfaces}, the set $S_{2,3}$ was computed. More explictly, if $(S,D)$ is defined by $f$ and $h$ and $t\in (0,1)$, the pair $(S, D)$ is GIT$_t$-semistable if and only if
	$$\max_{\lambda\in S_{2,3}}\mu_t(f, h, \lambda)\coloneqq \max_{\lambda\in S_{2,3}}\{\langle f,\lambda\rangle + t \langle h, \lambda \rangle \}\leqslant 0$$
	and $(S, D)$ is not GIT$_t$-stable if in addition there is a $\lambda_i\in S_{2,3}$ such that $\mu_t(S,D,\lambda_i)=0$. The set $S_{2,3}$ includes the element $\lambda_*=\mathrm{Diag}(S^{a_*})$, where $a_*=(1,1,0,-2)$ ($\lambda_*=\overline \lambda_2$ in the notation of \cite[Lemma 2.1]{Gallardo-JMG-cubic-surfaces}).
	
	Suppose that $h_2\neq 0$ in \eqref{eq:a2-equations}. Then, running \eqref{eq:HM-function-def} and \eqref{eq:a2-HM-function} through the finite list of elements $\lambda\in S_{2,3}$, gives that $\mu_t(S, D, \lambda)\leqslant 0$ for all $t\in (0,1)$ and all $\lambda \in S_{2,3}$. Moreover $\mu_t(S, D, \lambda_*)=0$. Hence $(S, D)$ is strictly GIT$_t$-semistable.	Conversely, suppose that $h_2=0$. Then $\mu_t(S,D,\lambda_*)=t>0$ for $t\in (0,1)$ and $(S,D)$ is GIT$_t$-unstable. Hence, $(S,D)$ with one surface singularity of type $\boldsymbol{A}_2$ at $p\in \mathrm{Supp}(D)$ is GIT$_t$-unstable if and only if $h_2=0$.
	
	Moreover, notice that $\lim_{t\rightarrow 0}\lambda_*(t)\cdot f\eqqcolon f_0=x_0x_1x_3+x_2^3$, which is the equation for the unique cubic surface $S_*$ with three $\boldsymbol{A}_2$ singularities (and therefore irreducible). If in addition $h_2\neq 0$, then $\lim_{t\rightarrow 0}\lambda_*(t)\cdot h=x_2$, which is the hyperplane section in $S_*$ corresponding to $D_*$, the union of the unique three lines in $S_*$, each passing through two of the singularities of $S_*$.
	
	The only thing that remains to show is that $h_2\neq 0$ and $h_2=0$ in \eqref{eq:a2-equations} characterise $(S,D)$ as the log pairs described in (iii) and (iv) in the statement, respectively. To see this, notice that if $h_2\neq 0$, then we may describe $D$ by substituting $x_2=-\frac{h_0}{h_2}x_0-\frac{h_1}{h_2}x_1$ and $x_3=1$ in the equation of $f$ in \eqref{eq:a2-equations} to obtain the equation of $D$ in $H=\{h=0\}$ localised at $p$ as $x_0x_1+g_3(x_0:x_1)$, where $g_3$ is a homogeneous polynomial of degree $3$. Hence $D$ has a nodal singularity at $p=(0:0:0:1)$ and by the classification of singularities of plane cubic curves (see, e.g. \cite[Table 2]{Gallardo-JMG-cubic-surfaces}), $D$ has only normal crossing singularities. 
	
	Conversely, if $h_2=0$, then either $h_1\neq 0$ or $h=x_0$. In the former case, substituting $x_1=cx_0$, $c=-h_0/h_1\in \mathbb C$ in the equation of $f$ in \eqref{eq:a2-equations} and localising at $p$ we obtain that the equation of $D$ is locally given by $cx_0^2+x_2^3+g_3(x_0:x_2)$ and hence $D$ has either a cuspidal singularity at $p$ ($c\neq 0$) or $D$ is the union of three lines (counted with multiplicity) intersecting at $p$. Finally, if $h=x_0$, then substituting $x_0=0,x_3=1$ on $f$ we obtain that the equation of $D$ is the union of three lines through $p$, counted with multiplicity.
	\end{proof}
	\begin{theorem}
		\label{theorem:K-stability-implies-GIT}
		Let $(X,D)$ be a log pair of dimension $n$, $L$ be a very ample line bundle of $X$, such that $i\colon X\hookrightarrow \mathbb P^N$ (such that $L=i^*\left(\mathcal O_{\mathbb P^N}\left(1\right)\right)$),  and $\beta\in (0,1)$, where either:
		\begin{enumerate}[(i)]
			\item $X\subset \mathbb P^3$ is a cubic surface which is smooth or has only $\boldsymbol{A}_1$ and $\boldsymbol{A}_2$ singularities, with the embedding $i$ giving the natural inclusion, $D=X\cap H$ is a hyperplane section, $N=3$, $Y\coloneqq\mathbb P(H^0(\mathbb P^{3}, \mathcal O(3)))\times \mathbb P(H^0(\mathbb P^{3}, \mathcal O(1)))$, $G=\mathrm{SL}(4, \mathbb C)$ acts naturally on $Y$, or
			\item $X$ is a normal Fano variety, $D\in \mathbb P(H^0(-aK_X))\eqqcolon Y$ is some non-trivial Cartier divisor for some $a\in \mathbb N$, $G=\mathrm{Aut}(X)$ is reductive and its character group is trivial and acts naturally on $Y$ with its natural representation. Suppose further that there is some GIT polystable $D_0\in \mathbb P(H^0(-aK_X))$, such that $(X, (1-\beta)D_0)$ is log K-polystable.
		\end{enumerate}
		Let $\Lambda_{CM,\beta}$ be the log CM line bundle of the universal family $\pi\colon \mathcal Y\rightarrow Y$. We have that $\Lambda_{CM,\beta}$ is ample. Suppose $(X,(1-\beta)D,L)$ is log K-(semi/poly)stable, then for each of the two situations described above we have:
		\begin{enumerate}[(i)]
			\item $(X, D)$ is GIT$_{t(\beta)}$ (semi/poly)stable, where $t(\beta)=\frac{b(\beta)}{a(\beta)}$ for $b,a$ as in Theorem \ref{theorem:git-cm-correspondence},
			\item $D$ is GIT (semi/poly)stable,
		\end{enumerate}
		with respect to the $\Lambda_{CM,\beta}$-polarisation of $Y$.
	\end{theorem}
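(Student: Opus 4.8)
The plan is to convert one-parameter subgroups of $G$ acting on $Y$ into test configurations of $(X,D,L)$ and then read off stability through the weight--Donaldson--Futaki dictionary of Theorem~\ref{theorem:weight-DF}, after identifying the relevant polarisation via Theorem~\ref{theorem:git-cm-correspondence}. The ampleness of $\Lambda_{CM,\beta}$ is the easy input. In case (i) it is immediate from Theorem~\ref{theorem:git-cm-correspondence}, which exhibits $\Lambda_{CM,\beta}\cong\mathcal{O}(a(\beta),b(\beta))$ with $a(\beta),b(\beta)>0$, ample by Lemma~\ref{lemma:picard-isom}; moreover $t(\beta)=b(\beta)/a(\beta)$, so GIT-stability with respect to $\Lambda_{CM,\beta}$ is precisely GIT$_{t(\beta)}$-stability. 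In case (ii), $Y\cong\mathbb{P}^M$ has Picard rank one, so it suffices to compute $\deg\Lambda_{CM,\beta}$ on a generic pencil $\mathbb{P}^1\hookrightarrow Y$ using the first formula of Theorem~\ref{theorem:CM-chern-class}: since $X$ is fixed, $\mathcal{X}=X\times\mathbb{P}^1$ and $\mathcal{L}$ is pulled back from $X$, so $c_1(\mathcal{L})^{n+1}$ and $c_1(\mathcal{L})^n\cdot c_1(K_{\mathcal{X}/\mathbb{P}^1})$ vanish, leaving $\deg\Lambda_{CM,\beta}=(1-\beta)(n+1)\,\pi_*\!\big(c_1(\mathcal{L})^n\cdot\mathcal{D}\big)$, a positive multiple of a positive intersection number; hence $\Lambda_{CM,\beta}$ is ample for $\beta<1$.

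For the stability statements, given a one-parameter subgroup $\lambda\colon\mathbb{G}_m\to G$ I would take the closure of the orbit of $[X,D]$ to produce a flat family over $\mathbb{A}^1$; polarising it by $-K_{\mathcal{X}/\mathbb{A}^1}$ in case (i), respectively by the tautological bundle induced by $L$ in case (ii), yields a test configuration $(\mathcal{X}_\lambda,\mathcal{D}_\lambda,\mathcal{L}_\lambda)$ in the sense of Definition~\ref{def:test-configuration}, \emph{provided its central fibre is again a genuine log pair}. Granting this, the Hilbert--Mumford weight of $[X,D]$ along $\lambda$ with respect to $\Lambda_{CM,\beta}$ agrees, up to a fixed positive constant, with the total $\mathbb{C}^*$-weight $w(\Lambda_{CM,\beta}(\mathcal{X}_\lambda,\mathcal{D}_\lambda,\mathcal{L}_\lambda))$, which by Theorem~\ref{theorem:weight-DF} equals $(n+1)!\,\mathrm{DF}_\beta(\mathcal{X}_\lambda,\mathcal{D}_\lambda,\mathcal{L}_\lambda)$. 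With the sign conventions of Theorem~\ref{theorem:weight-DF} and Definition~\ref{dfn:Kstability}, a $\lambda$ that GIT-destabilises $[X,D]$ then produces a test configuration of negative $\beta$-Donaldson--Futaki invariant, while vanishing of the weight corresponds to $\mathrm{DF}_\beta=0$.

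The italicised proviso is the crux of the argument and the main obstacle. For a GIT-destabilising $\lambda$ the limit $\lim_{s\to 0}\lambda(s)\cdot(p,l)=(\overline p,\overline l)$ may fail to be a log pair---namely $\overline l$ may divide $\overline p$---in which case it does not define an admissible test configuration; this is exactly the phenomenon flagged in Remark~\ref{remark:special-OPS}. In case (i) it is resolved by Proposition~\ref{prop:sss_A2}: for a cubic surface carrying an $\boldsymbol{A}_2$ singularity in $\mathrm{Supp}(D)$ the distinguished subgroup $\lambda_*=\mathrm{Diag}(s,s,1,s^{-2})$ simultaneously destabilises the pair and has irreducible central fibre $S_*=\{x_0x_1x_3+x_2^3=0\}$, so its limit is a genuine log pair, whereas pairs that are already GIT$_t$-stable or GIT$_t$-polystable have nothing to destabilise. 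In case (ii) the difficulty does not arise: $X$ is fixed and only $D$ degenerates inside the fixed linear system $|-aK_X|$, so the central divisor stays an effective member of $|-aK_X|$ and $(X,\mathcal{D}_\lambda|_0)$ is automatically a log pair.

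I would then conclude by contraposition. If $[X,D]$ is not GIT-semistable with respect to $\Lambda_{CM,\beta}$, the good destabilising $\lambda$ furnished above gives a nontrivial test configuration with $\mathrm{DF}_\beta<0$, contradicting log K-semistability; if it is not GIT-stable there is a nontrivial $\lambda$ with vanishing weight, hence a nontrivial test configuration with $\mathrm{DF}_\beta\leq 0$, contradicting log K-stability. For polystability, K-polystability already yields GIT semistability by the above, so it remains to show the orbit $G\cdot[X,D]$ is closed. Were it not, it would degenerate through a weight-zero $\lambda$ to a strictly smaller closed orbit $[X_0,D_0]$ with $\mathrm{DF}_\beta=0$; by Definition~\ref{dfn:Kstability} such a test configuration must be a product, forcing $(X_0,D_0)\cong(X,D)$, and since in both situations every isomorphism of the pair is induced by an element of $G$, the orbit is in fact closed and $[X,D]$ is GIT polystable. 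In case (ii) the hypothesised GIT-polystable $D_0$ with $(X,(1-\beta)D_0)$ log K-polystable ensures this comparison is non-vacuous and anchors the degeneration argument.
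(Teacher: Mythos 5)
Your proof follows the paper's own strategy almost step for step: convert one-parameter subgroups into test configurations, use Theorem~\ref{theorem:weight-DF} to translate the Hilbert--Mumford weight for $\Lambda_{CM,\beta}$ into $\mathrm{DF}_\beta$, invoke Proposition~\ref{prop:sss_A2} to guarantee that in case (i) the destabilising subgroup can be chosen so that its limit is a genuine log pair, and close the polystability argument by degenerating to the closed orbit, forcing the test configuration to be a product and identifying the central fibre with $(X,D)$ via an element of $G$. The one place where you genuinely diverge is the ampleness of $\Lambda_{CM,\beta}$ in case (ii). The paper argues indirectly: since $\mathrm{rk}\,\mathrm{Pic}(Y)=1$ the line bundle is ample, antiample or trivial, and the latter two options are excluded by playing the nonnegative weight of a nef $\Lambda_{CM,\beta}^{-1}$ against the strictly positive Donaldson--Futaki invariant of a non-product test configuration of the hypothesised log K-polystable pair $(X,(1-\beta)D_0)$ --- this is the \emph{only} use of the $D_0$ hypothesis, which does not in fact ``anchor the degeneration argument'' as you suggest. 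You instead compute $\deg\Lambda_{CM,\beta}$ directly on a generic pencil via Theorem~\ref{theorem:CM-chern-class}, obtaining $(1-\beta)(n+1)\,L^n>0$; this is cleaner and, where it applies, would even render the $D_0$ hypothesis unnecessary for ampleness. The caveat is that Theorem~\ref{theorem:CM-chern-class} is stated under regularity assumptions on the fibres ($\mathbb Q$-factoriality, applicability of Grothendieck--Riemann--Roch, e.g.\ local complete intersection fibres), whereas in case (ii) the fixed fibre $X$ is only assumed to be a normal Fano variety; the paper's more roundabout argument is designed precisely to avoid this issue, so your computation should either be restricted to the situations where those hypotheses hold or be supplemented by a justification that the intersection-theoretic formula for $\deg\Lambda_{CM,\beta}$ persists for the normal varieties at hand.
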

	\begin{proof}
		Note that by naturality of the construction $\Lambda_{CM,\beta}$ is $G$-linearised. Furthermore, $L$ is $G$-linearised (see, for instance \cite[Lemma 2.1]{Gallardo-JMG-framework}) and both linearisations are the same up to rescaling since the $G$-linearised Picard group injects into the Picard group as the character group of $G$ is trivial. Firstly, we prove the ampleness of the log CM line bundle. In case (i), we have that $\Lambda_{CM,\beta}\cong \mathcal O(a,b)$ for some $a,b\in \mathbb Z_{>0}$, by Theorem \ref{theorem:git-cm-correspondence}, and hence  $\Lambda_{CM, \beta}$ is ample.
		
		In case (ii) we have that $\mathrm{rk}(\mathrm{Pic}(Y))=1$, so $\Lambda_{CM, \beta}$ is either ample, antiample or trivial. In the antiample and trivial cases, we may take any non-trivial test configuration $(\mathcal X, \mathcal D, \mathcal L)$ for $(X, (1-\beta)D_0)$ where both $L$ and $\Lambda_{CM, \beta}$ are defined and agree for all fibres, including the central fibre (e.g. by taking any non-trivial one-parameter subgroup which kills some monomials of $D_0$ as $t$ goes to $0$). On the one hand $w(\Lambda_{CM, \beta}(\mathcal X, \mathcal D, \mathcal L))\leqslant 0$ since $\Lambda_{CM,\beta}^{-1}$ is nef. On the other hand, $\mathrm{DF}_\beta(\mathcal X, \mathcal D, \mathcal L)>0$ since $(X,(1-\beta)D_0)$ is log K-polystable and $(\mathcal X, \mathcal D, \mathcal L)$ is not trivial, contradicting Theorem \ref{theorem:weight-DF}.
		
		The proof follows the same strategy as in \cite[Theorem 3.4]{Odaka-Spotti-Sun}. Let $\lambda$ be a one-parameter subgroup of $G$ acting on a point $p\in Y$, which represents either a log pair $(X, H)$ where $X$ is a cubic surface, $H\not\subset X$ is a hyperplane and $D=X\cap H$ is a hyperplane section (case (i)) or a pluri-anticanonical section $D$ (case (ii)). Consider the natural projection $\pi\colon\overline{\mathcal X} \coloneqq\overline{\lambda\cdot p}\subset Y\times \mathbb P^1\rightarrow \mathbb P^1$ and by abuse of notation $\pi\colon\mathcal X=\overline{\mathcal X}\setminus \{\pi^{-1}(\infty)\}\rightarrow \mathbb C$. Let $q=\pi^{-1}(0)\in Y$ be the central fibre of $\pi$. In case (ii), $q$ is a hypersurface $\overline D$ and hence $\overline X$ is a test-configuration. In case (i), $q\in Y$ is a pair $(\overline X, \overline H)$ formed by a cubic surface $X$ and a hyperplane $H$. If $\overline H\not\subset \overline X$ (for instance, if $\overline X$ is irreducible), then $\overline D\coloneqq \overline H\cap \overline X\in |-K_{\overline X}|$ and $\lambda$ induces a test configuration of $(X,D)$. By Proposition \ref{prop:sss_A2}, if $X$ is smooth or it has only $\boldsymbol{A}_1$ or $\boldsymbol{A}_2$ singularities and it is not semistable, we can find a destabilising one-parameter subgroup that induces a destabilising test configuration. Hence, in both case (i) and (ii) if $(X,(1-\beta)D,L)$ is log K-(semi)stable, the statement follows at once from Theorem \ref{theorem:weight-DF}, which allow us to write the  Hilbert-Mumford numerical criterion in terms of the Donaldson-Futaki invariant of the induced test configuration. 
		
		Now suppose that $(X, (1-\beta)D, L)$ is  K-polystable, then it is also K-semistable. We distinguish the two cases separately:
		\begin{enumerate}[(i)]
			\item Let $p\in Y$ be the point representing $(X, H)$. 
			\item Let $p\in Y$ be the point representing $D$.
		\end{enumerate} 
		The point $p$ is GIT semistable. If it is not GIT polystable, by definition there is a one-parameter subgroup $\gamma(t)$ of $G$ such that the pair $\overline p=\lim_{t\rightarrow 0}\gamma(t)\cdot p$ is GIT polystable but not GIT stable. Let $(\mathcal X, \mathcal D, \mathcal L)$ be the test configuration induced by $\gamma$ and let $(\overline X, \overline D)$ be its central fibre (note that the induced one parameter subgroup is in $Aut(\overline X, \overline D)$). Then $w(\Lambda_{CM,\beta}(\mathcal X, \mathcal D, \mathcal L))=0$. Theorem \ref{theorem:weight-DF} implies $\mathrm{DF}_\beta(\mathcal X, \mathcal D, \mathcal L)=0$, but as $(X, (1-\beta)D)$ is log K-polystable, $(\mathcal X,\mathcal D)\cong (X\times \mathbb C, D\times \mathbb C)$, once the $\mathbb C^*$-action is ignored. Hence $(\overline X, \overline D)\cong (X,D)$. But then $\overline p = p$ is GIT polystable.
	\end{proof}
	
	\begin{rmk}
		To our knowledge this is the first article in the literature using the moduli continuity method to describe compactifications of log K-stable pairs. Therefore, we find it is important to stress the obstruction we encountered when compactifying the orbit of one-parameter subgroups to obtain test configurations in the proof of Theorem \ref{theorem:K-stability-implies-GIT}, since this problem does not appear in the case of cubic surfaces (without boundary) or more generally in the hypersurfaces cases of \cite{Odaka-Spotti-Sun}. See Remark \ref{remark:special-OPS} for such an account. We must stress that this difficulty does appear when dealing with other varieties (no boundary) such as complete intersections \cite{Odaka-Spotti-Sun, Spotti-Sun-delPezzo-quadrics} and a different workaround to address it in the case of quadrics can also be provided by analysing the GIT quotient, see Appendix of  \cite{Spotti-Sun-delPezzo-quadrics}. A more desirable solution than the one we use would involve showing that any GIT$_t$-unstable pair $(X,D)$ can be destabilised by a one-parameter subgroup whose limit is a not too singular log pair $(\overline X, \overline D)$. However, we are not able to prove this in full generality, but in a case-by-case basis.
	\end{rmk}
	
	\section{Proofs of the main theorems}
	\label{sec:proofs}
	
	We first observe that in all the cases considered in our main theorems we know that for all $\beta\in (0,1)$ there exists conical K\"ahler-Einstein metrics on log pairs $(X,(1-\beta)D)$ for $X$ and $D$ smooth (pluri)anticanonical, which have positive Einstein constant if  $\beta$ is greater than the log Calabi-Yau threshold. This follows by the interpolation property of log $K$-stability \cite{Li-Sun-conical-KE}, since $X$ is K\"ahler-Einstein, and K\"ahler-Einstein metrics exist for all small enough value of $\beta$. Now we restrict to the case when $(X,(1-\beta)D)$ is a log smooth log Fano pair. By the work of Chen-Donaldson-Sun \cite{Chen-Donaldson-Sun-Kstability-all}, if we have a sequence $(X_i,(1-\beta)D_i, g_i)$ of singular K\"ahler-Einstein metrics $g_i$ on $X_i$ with conical singularities of angle $2\pi\beta$ along $D_i$ where the $X_i$ are deformation equivalent, we can take  subsequences converging in the Gromov-Hausdorff topology  to $(W,(1-\beta)\Delta, g_\infty)$, a klt weak K\"ahler-Einstein Fano pair, where $W$ is a $\mathbb{Q}$-Gorenstein smoothable Fano variety with the same degree as  $X_i$.
	
	Our next goal is to obtain some  a-priori control on the singularities appearing in the Gromov-Hausdorff limits. The crucial estimate we need is the following purely algebro-geometric result of Li and Liu \cite[Proposition 4.6]{Li-Liu-volume-minimization},  which states the following: 
	
	\begin{theorem}[{\cite[Proposition 4.6]{Li-Liu-volume-minimization}}]
		\label{theorem:liu-estimate}
		For any K-semistable log Fano pair $(W,(1-\beta)\Delta)$, 
		$$ (-K_W-(1-\beta)\Delta)^n \leq \Big(1+\frac{1}{n}\Big)^n \widehat{\mathrm{vol}}_{(W,(1-\beta)\Delta), p},$$
		for $p$ any point in $W$. 
	\end{theorem}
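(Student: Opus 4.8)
The plan is to derive the estimate from the valuative criterion for K-semistability together with a comparison between the global and local volume functions attached to a valuation. Write $L \coloneqq -K_W - (1-\beta)\Delta$, which is ample since the pair is log Fano, and set $V \coloneqq L^n = (-K_W-(1-\beta)\Delta)^n$. For a valuation $v$ over $W$ I would use the expansion $\mathrm{vol}(L - tv) \coloneqq \lim_{m} \frac{1}{m^n/n!}\dim\{ s\in H^0(W, mL) : v(s) \geqslant tm\}$, whose existence and basic properties (monotonicity in $t$, continuity, concavity of its $n$-th root) follow from the theory of Okounkov bodies. The first ingredient is the valuative criterion of Fujita and Li in the form valid for log Fano pairs: $(W,(1-\beta)\Delta)$ is K-semistable if and only if $A_{(W,(1-\beta)\Delta)}(v) \geqslant S(v)$ for every valuation $v$ over $W$, where $S(v) = \frac1V\int_0^\infty \mathrm{vol}(L-tv)\,dt$ and $A$ denotes the log discrepancy of the pair.

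The second, and decisive, ingredient is a local--global inequality: for any valuation $v$ centred at a point $p$ and any $t \geqslant 0$,
$$\mathrm{vol}(L-tv) \geqslant V - t^n\,\mathrm{vol}(v),$$
where $\mathrm{vol}(v) = \lim_m \frac{\dim_{\mathbb C}(\mathcal O_{W,p}/\mathfrak a_m(v))}{m^n/n!}$ is the normalised volume of the valuation, with $\mathfrak a_m(v) = \{ f : v(f)\geqslant m\}$. I would obtain this by comparing a global section with its jet at $p$: the map sending $s \in H^0(mL)$ to its class modulo $\mathfrak a_{\lceil tm\rceil}(v)$ has kernel exactly $\{ s : v(s)\geqslant tm\}$, so the number of sections with $v(s) < tm$ is at most $\dim(\mathcal O_{W,p}/\mathfrak a_{\lceil tm\rceil}(v))$; dividing by $m^n/n!$ and letting $m\to\infty$ yields $V - \mathrm{vol}(L-tv) \leqslant t^n\,\mathrm{vol}(v)$, which is the asserted inequality.

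Granting these two inputs, the estimate follows by a short computation. Fix $v$ centred at $p$ with $\mathrm{vol}(v)>0$ and set $t_0 = (V/\mathrm{vol}(v))^{1/n}$, the value at which the lower bound above vanishes. Integrating on $[0,t_0]$ gives
$$V\,S(v) = \int_0^\infty \mathrm{vol}(L-tv)\,dt \geqslant \int_0^{t_0}\big(V - t^n\mathrm{vol}(v)\big)\,dt = V t_0 - \mathrm{vol}(v)\frac{t_0^{\,n+1}}{n+1} = \frac{n}{n+1}\,V t_0,$$
so that $S(v)\geqslant \frac{n}{n+1}\left(V/\mathrm{vol}(v)\right)^{1/n}$. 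Combining with K-semistability, $A_{(W,(1-\beta)\Delta)}(v)\geqslant S(v)\geqslant \frac{n}{n+1}(V/\mathrm{vol}(v))^{1/n}$, and raising to the $n$-th power and rearranging yields $V\leqslant \left(1+\frac1n\right)^n A_{(W,(1-\beta)\Delta)}(v)^n\,\mathrm{vol}(v)$. Taking the infimum over all valuations $v$ centred at $p$ and recalling $\widehat{\mathrm{vol}}_{(W,(1-\beta)\Delta),p} = \inf_v A_{(W,(1-\beta)\Delta)}(v)^n\,\mathrm{vol}(v)$ gives the claimed inequality. As a sanity check, every inequality is an equality for $(W,\Delta)=(\mathbb P^n, 0)$ with $v = \mathrm{ord}_p$ at a smooth point, where $V=(n+1)^n$ and $\widehat{\mathrm{vol}} = n^n$, confirming that the constant $(1+\frac1n)^n$ is sharp.

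Finally I would flag where the real difficulty lies. The self-contained part is the local--global inequality, which as explained is essentially the observation that the image of a jet map cannot exceed its target; the genuine weight of the argument sits in the two imported pieces of machinery: the valuative criterion for K-semistability of log Fano pairs (resting on the Fujita--Li theory and its extension to pairs), and the foundational results guaranteeing that $\mathrm{vol}(L-tv)$ and $\mathrm{vol}(v)$ exist as limits with the stated analytic properties for \emph{arbitrary} (not merely divisorial) valuations. Handling general valuations, and ensuring $\widehat{\mathrm{vol}}_{(W,(1-\beta)\Delta),p}>0$ so that the infimum is meaningful, are precisely where the klt hypothesis and Izumi-type estimates enter.
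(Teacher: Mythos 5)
Your argument is correct, and it reconstructs essentially the proof given in the cited source (Li--Liu, building on Fujita and Liu): the paper itself states this as a black-box import with no proof, so there is nothing internal to compare against. The three ingredients you isolate --- the valuative criterion $A_{(W,(1-\beta)\Delta)}(v)\geq S(v)$, the local--global bound $\mathrm{vol}(L-tv)\geq V-t^n\mathrm{vol}(v)$ via the jet map, and the integration up to $t_0=(V/\mathrm{vol}(v))^{1/n}$ --- are exactly the standard route, and your identification of where the genuine difficulty sits (the criterion for arbitrary, not merely divisorial, valuations) is accurate.
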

	
	Here  $\widehat{\mathrm{vol}}_{(W,(1-\beta)\Delta), p}$ is the normalised volume at the singularity $p\in W$, defined as the infimum of local volumes of valuations centred at $p$ normalised by their log discrepancies,  as in \cite{Li-Liu-Xu-normalized-volume-guide}. It follows from the definition that the normalised volume satisfies
	\begin{equation}
	\label{eq:volume-inequality-simplification}
	\widehat{\mathrm{vol}}_{(W,(1-\beta)\Delta), p}\leq \widehat{\mathrm{vol}}_{W,p}, \; \mbox{for any} \; \beta \in (1-\mbox{lct}(W,\Delta),1),
	\end{equation}
	where $\mbox{lct}(W,\Delta)$ denotes the log canonical threshold of the pair $(W,\Delta)$.
	One can easily verify \eqref{eq:volume-inequality-simplification} for the normalised volume of any quasi-monomial valuations (and as a result, for all valuations). 
	
	Since by Berman \cite{Berman-KE-implies-K-polystability}, a K\"ahler-Einstein log Fano pair is log K-polystable, we can use the above estimate to control the singularities of the limit ambient space $W$. Let us analyze the cases that appear in our main theorems.
	
	\begin{prop}
		\label{prop:GH-limit-cubic pairs} Let $(W,(1-\beta)\Delta)$ be the Gromov-Hausdorff limit of smooth K\"ahler-Einstein pairs $(X_i,(1-\beta)D_i)$ with $X_i$ smooth cubic  surface and $D_i$ smooth hyperplane section. For any $\beta>\beta_0=\frac{\sqrt{3}}{2}$, the surface $W$ must be itself defined as cubic surface in $\mathbb{P}^3$ whose singular locus (if non-empty) consists of $\boldsymbol{A}_1$ or $\boldsymbol{A}_2$ singularities and the limit divisor $\Delta$ is a hyperplane section. 
	\end{prop}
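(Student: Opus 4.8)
The plan is to use the Gromov--Hausdorff structure theory to place $W$ in a controlled class of surfaces, then bound its singularities \emph{pointwise} through the volume estimate of Theorem \ref{theorem:liu-estimate}, and finally read off the allowed singularity types from the normalised volumes of two-dimensional quotient singularities.

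I would start from the structural input recalled just before the statement: by Chen--Donaldson--Sun \cite{Chen-Donaldson-Sun-Kstability-all} the limit $W$ is a $\mathbb{Q}$-Gorenstein smoothable Fano surface of the same degree as the $X_i$, so $(-K_W)^2=3$; the limit divisor $\Delta$ lies in $|-K_W|$; and $(W,(1-\beta)\Delta)$ is a klt weak K\"ahler--Einstein log Fano pair, hence log K-polystable, in particular K-semistable, by Berman \cite{Berman-KE-implies-K-polystability}. Since $\Delta\sim -K_W$ we have $-K_W-(1-\beta)\Delta\sim\beta(-K_W)$, and therefore $\big(-K_W-(1-\beta)\Delta\big)^2=\beta^2(-K_W)^2=3\beta^2$.

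Next I would apply Theorem \ref{theorem:liu-estimate} with $n=2$, which for every $p\in W$ yields
\begin{equation*}
\widehat{\mathrm{vol}}_{(W,(1-\beta)\Delta),p}\ \geq\ \Big(\tfrac{3}{2}\Big)^{-2}\big(-K_W-(1-\beta)\Delta\big)^2=\tfrac{4}{9}\cdot 3\beta^2=\tfrac{4}{3}\beta^2 .
\end{equation*}
As $(W,(1-\beta)\Delta)$ is klt, the comparison \eqref{eq:volume-inequality-simplification} applies and upgrades this to $\widehat{\mathrm{vol}}_{W,p}\geq\tfrac{4}{3}\beta^2$ for all $p$. For $\beta>\beta_0=\tfrac{\sqrt{3}}{2}$ one has $\beta^2>\tfrac34$, and hence $\widehat{\mathrm{vol}}_{W,p}>1$ at every point of $W$. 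It then remains to classify the singularities compatible with $\widehat{\mathrm{vol}}_{W,p}>1$. Being $\mathbb{Q}$-Gorenstein smoothable, each singular point of $W$ is of class T, i.e.\ either a du Val singularity or a cyclic quotient $\tfrac{1}{dn^{2}}(1,dna-1)$; in all cases it is a quotient singularity $\mathbb{C}^2/G$ with $\widehat{\mathrm{vol}}=4/|G|$, by the finite-degree behaviour of normalised volume under quasi-\'etale covers (see \cite{Li-Liu-Xu-normalized-volume-guide}), the quantitative two-dimensional classification being exactly what is exploited in \cite{Liu-volume-bound-surfaces}. The inequality $4/|G|>1$ forces $|G|<4$; among class-T singularities this leaves only the smooth point ($|G|=1$), the node $\boldsymbol{A}_1$ ($|G|=2$, volume $2$) and $\boldsymbol{A}_2$ ($|G|=3$, volume $\tfrac43$), since every non-canonical T-singularity has $|G|=dn^2\geq 4$, and both $\boldsymbol{A}_3$ and the simplest non-Gorenstein $\tfrac14(1,1)$ already sit at volume $1$. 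A degree-$3$ del Pezzo surface with at worst $\boldsymbol{A}_1,\boldsymbol{A}_2$ singularities is Gorenstein with canonical singularities, so its anticanonical embedding realises it as a cubic surface in $\mathbb{P}^3$, and $\Delta\in|-K_W|$ is then cut out by a hyperplane, giving the claim.

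The main obstacle is the last step: the geometric estimate alone only produces the lower bound $\tfrac43\beta^2$, so to convert ``$\widehat{\mathrm{vol}}_{W,p}>1$'' into ``at worst $\boldsymbol{A}_2$'' one genuinely needs the exact value $4/|G|$ for the relevant quotient singularities, together with the fact that on a smoothable surface no other klt singularity has normalised volume in the window $(1,4)$. This is precisely the quantitative dimension-two input of \cite{Liu-volume-bound-surfaces}. The angle threshold is sharp for this method because the value $\beta_0=\tfrac{\sqrt3}{2}$ is exactly where $\tfrac43\beta^2=1$, which is the gap between the volumes of $\boldsymbol{A}_2$ (equal to $\tfrac43$) and $\boldsymbol{A}_3$ (equal to $1$).
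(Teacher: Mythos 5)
Your proposal is correct and follows essentially the same route as the paper: the continuity-of-volume computation $3\beta^2$, the Li--Liu bound of Theorem \ref{theorem:liu-estimate} combined with \eqref{eq:volume-inequality-simplification} to get $\widehat{\mathrm{vol}}_{W,p}\geq\tfrac{4}{3}\beta^2>1$, Liu's formula $4/|\Gamma|$ for two-dimensional quotient singularities, and the Koll\'ar--Shepherd-Barron classification of $\mathbb{Q}$-Gorenstein smoothable surface singularities to conclude $|\Gamma|\leq 3$ forces canonical singularities of type $\boldsymbol{A}_1$ or $\boldsymbol{A}_2$, whence $W$ is a cubic in its anticanonical embedding. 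Your explicit remark that the threshold $\beta_0=\tfrac{\sqrt{3}}{2}$ sits exactly at the volume gap between $\boldsymbol{A}_2$ and the $|\Gamma|=4$ singularities is a nice addition but not a departure from the paper's argument.
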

	\begin{proof}		
		The degree of the limit pair  is 
		$$(-K_W-(1-\beta)\Delta)^2=(-K_{X_i}-(1-\beta)D_i)^2=(-\beta K_{X_i})^2=3\beta^2,$$
		by continuity of volumes. Moreover, note that $W$ must have only isolated quotient singularities locally analytically isomorphic to $\mathbb{C}^2/\Gamma$,  where $\Gamma$ is a finite subgroup of $U(2)$ acting freely on $S^3$, since $W$ must have at worst klt singularities, and klt surface singularities are precisely quotient singularities \cite[6.11]{Klemens-Kollar-Mori}. Moreover, by Liu \cite{Liu-volume-bound-surfaces} the local normalised volume for quotient singularities is simply given by $\widehat{\mathrm{vol}}_{\mathbb{C}^2/ \Gamma ,0}= \frac{4}{\vert  \Gamma \vert }$.  Thus, from Theorem \ref{theorem:liu-estimate} and \eqref{eq:volume-inequality-simplification}, it follows that
		$$\frac{4}{\vert  \Gamma \vert }=\widehat{\mathrm{vol}}_{W,p}\geq \frac{4}{3} \beta^2.$$
		If we take $\beta>\beta_0=\frac{\sqrt{3}}{2}$, then $\vert  \Gamma \vert \leq 3$. By the classification of $\mathbb{Q}$-Gorenstein smoothable surface singularities \cite[Proposition 3.10]{kollar-shepherd-barron},  $\Gamma$ must be a cyclic group acting in $SU(2)$. Hence the singularities of $W$ are canonical and, by the classification of del Pezzo surfaces with canonical singularities, $W$ is a cubic surface with at worst $A_1$ or $A_2$ singularities. In particular, any anticanonical section of $W$ is given by a hyperplane section, as claimed.
	\end{proof} 
		
	For the case of $\mathbb{P}^n$ we need to assume the following conjecture.
	
		\begin{conjecture}[{Gap Conjecture \cite[Conjecture 5.5, c.f. Conjecture 2.1]{Spotti-Sun-delPezzo-quadrics}}]
		\label{conjecture:gap}	
			Let $p$ be a klt singularity of an $n$-dimensional variety $W$. Then $$\widehat{\mathrm{vol}}_{W,p}\leq 2(n-1)^n.$$
		\end{conjecture}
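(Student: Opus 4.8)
The plan is to reduce the conjecture to an estimate for K-semistable Fano cone singularities and then to bound the normalized volume of such a cone in terms of the index and the anticanonical degree of its link. Throughout, $p$ is a genuinely singular klt point: the smooth point has $\widehat{\mathrm{vol}}_{W,p}=n^n>2(n-1)^n$ for $n\ge 2$, so the content of the statement is the \emph{gap} strictly below the absolute maximum $n^n$.

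First I would apply the stable degeneration theory of normalized volumes (see \cite{Li-Liu-Xu-normalized-volume-guide}): the infimum defining $\widehat{\mathrm{vol}}_{W,p}$ is attained by an (essentially unique) quasi-monomial valuation whose associated graded ring degenerates $(W,p)$ to a klt Fano cone singularity $(W_0,o)$ that is K-semistable, with $\widehat{\mathrm{vol}}_{W,p}=\widehat{\mathrm{vol}}_{W_0,o}$. Since this value is preserved and the smooth point is the unique klt singularity of maximal normalized volume $n^n$, the cone $(W_0,o)$ is again singular. It therefore suffices to bound $\widehat{\mathrm{vol}}_{W_0,o}$ for singular K-semistable Fano cones.

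Second, in the quasi-regular case $(W_0,o)$ is the affine cone $C(V,L)$ over an $(n-1)$-dimensional K-semistable log Fano pair $(V,\Delta_V)$ with $-(K_V+\Delta_V)=rL$ for the cone polarization $L$. K-semistability of the cone means exactly that the canonical valuation $\mathrm{ord}_V$ realizes the minimum; computing its log discrepancy $r$ and its volume $L^{n-1}$ gives $\widehat{\mathrm{vol}}_{W_0,o}=r^n L^{n-1}=r\,(-(K_V+\Delta_V))^{n-1}$. The smooth point corresponds to $V=\mathbb P^{n-1}$, $r=n$, yielding $n^n$, and the ordinary double point to the smooth quadric $V=Q^{n-1}$, $r=n-1$, yielding $2(n-1)^n$. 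The irregular case I would treat by approximating the irrational Reeb field by quasi-regular ones and passing to the limit, or by running the same intersection computation on the graded ring.

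It then remains to bound $r\,(-(K_V+\Delta_V))^{n-1}$ for singular K-semistable cones. The K-semistable degree bound $(-(K_V+\Delta_V))^{n-1}\le n^{n-1}$ controls one factor, but the crux is that the index $r$ is not a priori bounded, and the canonical-valuation value can itself overshoot: for instance the fractional cone $C(\mathbb P^{n-1},\tfrac{n}{n-1}H)$ formally yields $(n-1)n^{n-1}>2(n-1)^n$ for $n\ge 3$. The resolution must come from K-semistability of the cone --- for such over-polarized cones $\mathrm{ord}_V$ is \emph{not} the minimizer and a strictly smaller value is attained --- and this is the main obstacle: a dimension-uniform a priori bound forcing every singular K-semistable cone into either the quadric configuration or a strictly smaller value is exactly what is missing, and is why the statement is only a conjecture. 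In low dimension one bypasses it by classification, which is the template I would follow: for $n=2$ every klt singularity is a quotient $\mathbb C^2/\Gamma$ with $\widehat{\mathrm{vol}}=4/|\Gamma|$, so a singular point has $|\Gamma|\ge 2$ and $\widehat{\mathrm{vol}}\le 2=2(n-1)^n$ (\cite{Liu-volume-bound-surfaces}, cf. Proposition \ref{prop:GH-limit-cubic pairs}), while for $n=3$ one uses boundedness of K-semistable log del Pezzo cones together with the explicit volume formula (\cite{Liu-Xu-Kstability-cubic-threefolds}).
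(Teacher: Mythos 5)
The statement you were asked to prove is a conjecture, and the paper does not prove it: it is quoted from \cite[Conjecture 5.5]{Spotti-Sun-delPezzo-quadrics}, and the only accompanying remark is that it is known in dimension two by \cite{Liu-volume-bound-surfaces} and in dimension three by \cite[Theorem 3.1.1]{Liu-Xu-Kstability-cubic-threefolds}. There is therefore no proof in the paper to compare yours against, and your writeup is honest about this: you do not claim to establish the general statement, and you locate the obstruction in the right place. Your reduction via stable degeneration to a K-semistable Fano cone, the formula $\widehat{\mathrm{vol}}=r\,(-(K_V+\Delta_V))^{n-1}$ for the canonical valuation on a quasi-regular cone, the identification of the values $n^n$ and $2(n-1)^n$ with $V=\mathbb P^{n-1}$ and the smooth quadric, and the observation that Fujita's volume bound controls only one of the two factors while the index $r$ is not a priori bounded are all accurate; your fractional-cone example showing that the canonical-valuation value can formally exceed $2(n-1)^n$ for $n\geq 3$ correctly pinpoints why K-semistability of the cone itself (not merely of its base) must supply the missing uniform input. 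The low-dimensional cases you fall back on are exactly the ones the paper invokes, and your $n=2$ argument ($\widehat{\mathrm{vol}}_{\mathbb C^2/\Gamma,0}=4/\vert\Gamma\vert\leq 2$ once $\vert\Gamma\vert\geq 2$) is the same computation the paper runs inside the proof of Proposition \ref{prop:GH-limit-cubic pairs}. In short, there is nothing to correct, but also nothing proved beyond the known cases, which is all that can be expected for a statement the paper itself leaves as an open conjecture and only ever uses as a hypothesis (in Theorem \ref{theorem:main-Pn} and Proposition \ref{prop:GH-Pn-hypersurfaces}).
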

		We remark that this is know to hold in dimension two by Liu \cite{Liu-volume-bound-surfaces}, and in dimension three by Liu and Xu \cite[Theorem 3.1.1]{Liu-Xu-Kstability-cubic-threefolds}. 
		
		\begin{prop}
		\label{prop:GH-Pn-hypersurfaces}	
		Let $(W,(1-\beta)\Delta)$ be  the Gromov-Hausdorff limit of smooth K\"ahler-Einstein pairs $(X_i,(1-\beta)D_i)$ with each $X_i$ isomorphic to $\mathbb{P}^n$  and $D_i$ smooth hypersurfaces of degree $d$. If we choose $\beta$ such that $1>\beta>\beta_0$, for $\beta_0$ as in \eqref{eq:beta0-Pn}, 
		then $W$ is also isomorphic to $\mathbb{P}^n$ and $\Delta$ is a hypersurface of degree $d$, \emph{provided} that the  Gap Conjecture \ref{conjecture:gap} holds.
		\end{prop}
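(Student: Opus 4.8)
The plan is to follow verbatim the strategy of Proposition~\ref{prop:GH-limit-cubic pairs}, replacing the degree-$3$ del Pezzo input by the Fano projective space and feeding the Gap Conjecture in place of Liu's explicit surface bound. First I would compute the degree of the limiting pair by continuity of volumes along the Gromov--Hausdorff degeneration. Writing $H=\mathcal{O}_{\mathbb{P}^n}(1)$, on each $X_i\cong\mathbb{P}^n$ one has $-K_{X_i}=(n+1)H$ and $D_i\sim dH$, so
\begin{equation*}
(-K_W-(1-\beta)\Delta)^n=(-K_{X_i}-(1-\beta)D_i)^n=\big((n+1)-(1-\beta)d\big)^n,
\end{equation*}
using $H^n=1$. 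Note this does not require knowing the degree of $\Delta$ in advance; it only uses that the anticanonical-type volume is continuous in the limit.

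Next I would bound the normalised volume at any hypothetical singular point $p\in W$. Since $(W,(1-\beta)\Delta)$ is the limit produced by Chen--Donaldson--Sun, it is a klt weak K\"ahler--Einstein Fano pair, hence by Berman it is log K-polystable and in particular K-semistable; moreover being klt means $\beta\in(1-\mathrm{lct}(W,\Delta),1)$ automatically, so \eqref{eq:volume-inequality-simplification} applies. Combining Theorem~\ref{theorem:liu-estimate} with \eqref{eq:volume-inequality-simplification} and, at a \emph{singular} point, the Gap Conjecture~\ref{conjecture:gap}, I would obtain
\begin{equation*}
\big((n+1)-(1-\beta)d\big)^n\le\Big(1+\tfrac{1}{n}\Big)^n\widehat{\mathrm{vol}}_{W,p}\le\Big(\tfrac{n+1}{n}\Big)^n\,2(n-1)^n .
\end{equation*}
Taking $n$-th roots and simplifying $\tfrac{n+1}{n}(n-1)=(n+1)(1-\tfrac1n)$ turns this into $(n+1)-(1-\beta)d\le(n+1)\sqrt[n]{2}\,(1-\tfrac1n)$, which rearranges to exactly $\beta\le\beta_0$ with $\beta_0$ as in \eqref{eq:beta0-Pn}. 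Thus for $\beta>\beta_0$ no singular point can occur, and $W$ is smooth.

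Finally I would upgrade ``$W$ smooth'' to ``$W\cong\mathbb{P}^n$''. Because $W$ is a $\mathbb{Q}$-Gorenstein smoothable limit of $\mathbb{P}^n$ and is itself smooth, the smoothing family is smooth near the central fibre; since $\mathbb{P}^n$ is infinitesimally rigid ($H^1(\mathbb{P}^n,T_{\mathbb{P}^n})=0$), the family is locally trivial and $W\cong\mathbb{P}^n$. The limiting boundary $\Delta$ is then an effective Weil divisor on $\mathbb{P}^n$; since $\mathrm{Pic}(\mathbb{P}^n)=\mathbb{Z}\cdot H$ and the divisor class (equivalently its degree) is preserved in the flat limit of the $D_i\sim dH$, we conclude $\Delta$ is a hypersurface of degree $d$.

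The genuinely delicate points are, first, checking that the comparison \eqref{eq:volume-inequality-simplification} is legitimately available---this rests on the klt-ness of the Chen--Donaldson--Sun limit, so that one never has to bound $\mathrm{lct}(W,\Delta)$ by hand---and, second, the passage from smoothness to $\mathbb{P}^n$, which uses rigidity rather than any volume estimate. The arithmetic matching the threshold to \eqref{eq:beta0-Pn} is exactly the $n$-dimensional analogue of the surface computation $\tfrac{4}{|\Gamma|}\ge\tfrac43\beta^2$ and is routine.
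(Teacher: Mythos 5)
Your first two steps coincide with the paper's proof: the degree computation by continuity of volumes, and the combination of Theorem \ref{theorem:liu-estimate}, inequality \eqref{eq:volume-inequality-simplification} and the Gap Conjecture \ref{conjecture:gap} to rule out singular points when $\beta>\beta_0$; your arithmetic recovering \eqref{eq:beta0-Pn} is correct. The gap is in your final step, where you upgrade ``$W$ smooth'' to ``$W\cong\mathbb{P}^n$'' via infinitesimal rigidity. The vanishing $H^1(\mathbb{P}^n,T_{\mathbb{P}^n})=0$ controls deformations \emph{of} $\mathbb{P}^n$, i.e.\ it shows that a family whose \emph{central} fibre is $\mathbb{P}^n$ is locally trivial, so that nearby fibres are also $\mathbb{P}^n$. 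Here the situation is reversed: $\mathbb{P}^n$ is the \emph{general} fibre and $W$ is the central fibre, and rigidity of the general fibre says nothing about the special fibre (the relevant cohomology $h^1(W,T_W)$ is only upper semicontinuous, so you get no vanishing on $W$ for free, and the argument as written is circular unless you already know $W\cong\mathbb{P}^n$). The statement that a smooth projective degeneration of $\mathbb{P}^n$ is again $\mathbb{P}^n$ is true, but it is a deep theorem (Siu's nondeformability of $\mathbb{P}^n$, or Hwang--Mok), not a consequence of $H^1(T_{\mathbb{P}^n})=0$.

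The paper closes this step differently: by Chen--Donaldson--Sun, $W$ is deformation equivalent to $\mathbb{P}^n$ through a smooth family (once smoothness of $W$ is known), the Fano index is constant along smooth deformations \cite{Gounelas-invariants-Fano-families}, so $W$ has index $n+1$, and Kobayashi--Ochiai then forces $W\cong\mathbb{P}^n$. Note the paper's subsequent remark that matching the anticanonical degree alone would \emph{not} suffice (there are smooth toric Fano $5$-folds with $(-K)^5=(-K_{\mathbb{P}^5})^5$), so some argument beyond the volume really is needed here; either cite Siu's theorem explicitly or use the index argument. Your concluding identification of $\Delta$ as a degree-$d$ hypersurface via $\mathrm{Pic}(\mathbb{P}^n)=\mathbb{Z}$ and flatness is fine.
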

		\begin{proof}

		Arguing as in the proof of Proposition \ref{prop:GH-limit-cubic pairs}, we have that $$(-K_W-(1-\beta)\Delta)^n=(n+1-(1-\beta)d)^n,$$
		and it is then easy to see that by taking $1>\beta> \beta_0= 1-\Big(\frac{n+1}{d}\big(1-2^{1/n}(1-\frac{1}{n})\Big)$, the normalised volume of any eventual singularity $p$ of the limit space $W$ must have volume
		$$\widehat{\mathrm{vol}}_{W,p}>2(n-1)^n.$$
		This contradicts the formula in Conjecture \ref{conjecture:gap}, which we assume to hold. Hence, $W$ is smooth. Moreover, by Chen-Donaldson-Sun \cite{Chen-Donaldson-Sun-Kstability-all} we know that $W$ is deformation equivalent to $\mathbb{P}^n$. Since the Fano index is constant along smooth deformations \cite[Proposition 6.2]{Gounelas-invariants-Fano-families}, the Fano index of $W$ is $n+1$. Hence $W$ is biholomorphic to $\mathbb P^n$ by Kobayashi-Ochiai \cite{Kobayashi-Ochiai-characterizations-CPn-hyperquadrics}, and the claims follow.
		\end{proof}
		
		\begin{rmk}
			Notice that there are know examples of smooth toric Fanos having the same degree as the projective space  starting from dimension five (the unique projective toric smooth Fano $5$-fold $X$ with $(-K_X)^5=(-K_{\mathbb P^5})^5=7776$ whose toric polytope $Q$ has $8$ vertices, $18$ facets and $\mathrm{vol}{Q}=18$, \cite{Obro-smooth-Fano-polytopes-algorithm, GRDB}). Thus, just having the same volume is not sufficient to determine that $W\cong \mathbb P^n$ and we really use that the index is preserved.
		\end{rmk}
		
		In order to conclude the proof of the main theorems one runs the moduli continuity method, which has already appeared in the literature for the absolute case (see, e.g. \cite{Odaka-Spotti-Sun}). For the reader's convenience, we briefly recall the method:		
		\begin{proof}[{Proof of Theorems \ref{theorem:main-cubicsurfaces} and \ref{theorem:main-Pn}}]
			First of all, notice that thanks to Theorem \ref{theorem:K-stability-implies-GIT}, propositions \ref{prop:GH-limit-cubic pairs}, \ref{prop:GH-Pn-hypersurfaces}, and  \cite{Berman-KE-implies-K-polystability}, we can define a natural map:
			$$\phi: \overline{M}_{\beta}^{GH} \longrightarrow \overline{M}^{GIT_\beta},$$
			where:\textsl{}
			\begin{enumerate}[(i)]
				\item in the situation described in the statement of Theorem \ref{theorem:main-cubicsurfaces}, $1>\beta>\beta_0=\frac{\sqrt{3}}{2}$, $\overline{M}_{\beta}^{GH}$ denotes the Gromov-Hausdorff compactification of the log pairs $(X,(1-\beta) D)$ formed by a del Pezzo surface $X$ of degree $3$ admitting a singular K\"ahler--Einstein metric with conical singularities of angle $2\pi\beta$ along an anticanonical section $D$, and $ \overline{M}^{GIT_\beta}$ denotes the  explicit GIT$_{t(\beta)}$ stability quotient of log pairs $(X,D)$, where $t(\beta)=\frac{9(1-\beta)}{9-\beta}$;
				\item in the situation described in the statement of Theorem \ref{theorem:main-Pn}, $1>\beta>\beta_0$ for $\beta_0$ as in \eqref{eq:beta0-Pn}, $\overline{M}_{\beta}^{GH}$ denotes the Gromov-Hausdorff compactification of the log pairs $(X,(1-\beta) D)$, where $X=\mathbb P^n$ admits a K\"ahler--Einstein metric with conical singularities of angle $2\pi\beta$ along $D\subset\mathbb P^n$ is a hypersurface of degree $d$, and $ \overline{M}^{GIT_\beta}$ denotes the  explicit GIT stability quotient of hypersurfaces $D\subset\mathbb P^n$ of degree $d$.
			\end{enumerate}
		Since singular K\"ahler-Einstein metrics with conical singularities of fixed angle are unique \cite{Chen-Donaldson-Sun-Kstability-all}, the map $\phi$ is injective. In addition, $\phi$ is a continuous map between the Gromov-Hausdorff topology in $\overline{M}_{\beta}^{GH}$ and the euclidean topology of $ \overline{M}^{GIT_\beta}$ . The latter follows by \cite{Chen-Donaldson-Sun-Kstability-all} and the Luna slice theorem (see also \cite{Spotti-Sun-Yao-singular-KE-Kstability} and \cite{Li-Wang-Xu-compact-moduli}). The image of $\phi$ is open and dense since all smooth log pairs are dense and K\"ahler-Einstein, and compact since $\overline{M}_{\beta}^{GH}$ is compact. Thus $\phi$ must be surjective. Since $\phi$ is a continuous map between a compact space and a Hausdorff space, it must be a homeomorphism. The claim for the line bundle in the statement of Theorem \ref{theorem:main-cubicsurfaces} follows from Theorem \ref{theorem:git-cm-correspondence}.
		\end{proof}

		\begin{rmk}
			We expect that similar analysis may be performed for other examples of Fano pairs (e.g., of cubic threefolds with a hyperplane section, compare \cite{Gallardo-JMG-cubic-surfaces, Liu-Xu-Kstability-cubic-threefolds}). 
		\end{rmk}

		\subsection{Explicit examples of K-polystable pairs}
		
		Thanks to the explicit description of GIT$_t$-stability for log pairs consisting of a cubic surface and a hyperplane section given in \cite[Theorems 1.3, 1.4]{Gallardo-JMG-cubic-surfaces}, we can \emph{characterise} precisely which of these log pairs are log K-polystable (equivalently which of these log pairs admit a K\"ahler-Einstein metric with conical singularities) when $\beta$ is large enough.	
		\begin{prop}\label{prop:K-stability-cubics-chamber1} For  $\beta>\beta_0=\frac{\sqrt{3}}{2}$, a log pair $(S,(1-\beta) D)$ (consisting of a del Pezzo surface $S$ of degree three and an anticanonical divisor $D\in |-K_S|$) is log K-polystable if and only if it belongs to the following list:		
			\begin{enumerate}[(i)]
				\item $S$ has finitely many singularities of types at worst $\boldsymbol{A}_1$ or $\boldsymbol{A}_2$ and if $P\in D$ is a surface singularity, then $P$ is at worst an $\boldsymbol{A}_1$ singularity of $S$.
				\item $S$ is the unique cubic surface with three $\boldsymbol{A}_2$ singularities and $D$ is the divisor consisting of the union of the unique three lines in $S$, each  of them containing two of the two $\boldsymbol{A}_2$ singularities.
			\end{enumerate}
		\end{prop}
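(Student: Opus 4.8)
The plan is to transport the explicit GIT$_{t(\beta)}$-stability classification recorded in Proposition \ref{prop:sss_A2} across the homeomorphism established in Theorem \ref{theorem:main-cubicsurfaces}. Since that theorem already identifies the K\"ahler--Einstein/K-polystable moduli space $\overline M^K_{3,\beta}$ with the GIT quotient $\overline M^{GIT}_{t(\beta)}$ for $\beta>\beta_0=\tfrac{\sqrt 3}{2}$, the desired classification of log K-polystable pairs should amount to a dictionary translation of the GIT description, provided one checks that this dictionary respects the underlying geometric pair $(S,D)$.

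First I would pin down the geometric type of any candidate pair. For $\beta>\beta_0$ every log K-polystable pair $(S,(1-\beta)D)$ arises --- by Berman \cite{Berman-KE-implies-K-polystability}, Chen--Donaldson--Sun \cite{Chen-Donaldson-Sun-Kstability-all}, and the construction of the compactification --- as a Gromov--Hausdorff limit of smooth K\"ahler--Einstein pairs, so Proposition \ref{prop:GH-limit-cubic pairs} forces $S$ to be a cubic surface with at worst $\boldsymbol A_1$ or $\boldsymbol A_2$ singularities and $D$ to be a hyperplane section. This places every relevant pair squarely inside the hypotheses of Proposition \ref{prop:sss_A2}.

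Next I would establish the equivalence ``K-polystable $\iff$ GIT$_{t(\beta)}$-polystable'' at the level of individual pairs. The forward implication is Theorem \ref{theorem:K-stability-implies-GIT}(i); the reverse follows from the surjectivity of the comparison map $\phi$ in the proof of Theorem \ref{theorem:main-cubicsurfaces}, since every GIT$_{t(\beta)}$-polystable orbit is then realised by a K-polystable pair with the same underlying $(S,D)$. Granting this, I would simply read off Proposition \ref{prop:sss_A2}: among surfaces with at worst $\boldsymbol A_1$/$\boldsymbol A_2$ singularities, the GIT$_t$-polystable pairs are exactly those of type (i) (GIT-stable, i.e. $\operatorname{Supp}(D)$ avoids every $\boldsymbol A_2$ point of $S$) and type (ii) (strictly polystable, with $S=S_*$ and $D=D_*$). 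The first case is precisely condition (i) of the present statement --- every surface singularity $P\in D$ being at worst $\boldsymbol A_1$ --- and the second is precisely condition (ii).

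The one genuine subtlety, and the step I expect to require the most care, is making the equivalence apply to \emph{individual} pairs rather than merely to the moduli spaces as topological spaces: one must verify that $\phi$ carries the isomorphism class of $(S,(1-\beta)D)$ to the GIT-polystable orbit of the \emph{same} underlying $(S,D)$. This is guaranteed because $\phi$ is the natural ``forget the metric'' map built in Theorem \ref{theorem:K-stability-implies-GIT}, with injectivity supplied by the uniqueness of conical K\"ahler--Einstein metrics; once this compatibility is recorded, the proposition follows by direct translation.
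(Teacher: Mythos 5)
Your proposal is correct and follows essentially the same route as the paper: both reduce the statement to the GIT$_{t(\beta)}$-polystability classification via the identification of Theorem \ref{theorem:main-cubicsurfaces}. The only cosmetic difference is that the paper sources that classification by computing $t(\beta_0)=\frac{3}{107}(33-16\sqrt{3})\approx 0.148<\frac{1}{5}$, placing $t(\beta)$ in the first GIT chamber and citing \cite[Theorems 1.3, 1.4]{Gallardo-JMG-cubic-surfaces} directly, whereas you read it off the internal Proposition \ref{prop:sss_A2} after using Proposition \ref{prop:GH-limit-cubic pairs} to restrict to surfaces with at worst $\boldsymbol{A}_1$ or $\boldsymbol{A}_2$ singularities --- the same external input, one layer removed.
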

		\begin{proof}
			From theorems \ref{theorem:main-cubicsurfaces} and \ref{theorem:git-cm-correspondence} we know that $(S,(1-\beta) D)$ is log K-polystable if and only if it is GIT$_t$-stable for
			$$0<t(\beta)=\frac{9(1-\beta)}{9-\beta}<t(\beta_0)=\frac{3}{107} (33 - 16 \sqrt 3)\approx 0.148<\frac{1}{5}$$
			when $\beta>\beta_0$. The description of the GIT$_t$-polystable pairs for $t\in(0,\frac{1}{5})\cap \mathbb Q$ follows from \cite[Theorems 1.3, 1.4]{Gallardo-JMG-cubic-surfaces}.
		\end{proof}

		Similarly we now apply Theorem \ref{theorem:main-Pn} to a survey of the GIT stability of hypersurfaces in low degrees and dimension to deduce K-polystability of pairs  $(\mathbb{P}^n,(1-\beta)H_d)$:
		
		\begin{prop}  Let $(n,d)$ in
			$$ \Omega\coloneqq\Big\{(2,3), (2,4), (2,5),(2,6),(3,3), (3,4)\Big\}$$
			and $1>\beta>\beta_0$ where $\beta_0$ is as in \eqref{eq:beta0-Pn}. A log pair $(\mathbb{P}^n,(1-\beta)D)$ (where $D\subset \mathbb P^n$ is a hypersurface of degree $d$) is log K-polystable if and only if the corresponding following conditions hold for each choice of $(n,d)\in \Omega$:
			\begin{itemize} 
				\item[$(2,3)$] $D$ is smooth or the union of three non-concurrent lines;
				\item[$(2,4)$] $D$ is either:
				\begin{enumerate}[(a)]
					\item reduced and smooth or has $\boldsymbol{A}_1$ and $\boldsymbol{A}_2$ singularities,
					\item $D=2C$ where $C$ is a smooth conic;
				\end{enumerate}
				\item[$(2,5)$] $D$ is smooth or has isolated singularities of type $\boldsymbol{A}_k$, $\boldsymbol{D}_4$ or $\boldsymbol{D}_5$, or there is some $(a:b)\in \mathbb P^1$ such that $D$ is isomorphic to
				$$\{x_1(x_0^2x_2^2+2ax_0x_2x_1^2+bx_1^4)=0\};$$
				\item[$(2,6)$] $D$ is a sextic curve satisfying one of the following:
				\begin{enumerate}
					\item $D$ is reduced and has ADE singularities,
					\item $D$ is the union of three distinct conics,
					\item $D$ is the union of a double line and an irreducible quartic curve,
					\item $D=2C_1+C_2$ where $C_1, C_2$ are conics, $C_1$ is irreducible and $C_1$ and $C_2$ intersect with simple normal crossings,
					\item $D=2C$, where $C$ is a smooth cubic,
					\item $D=2C_1+C_2$ where $C_1$ and $C_2$ are two conics tangent to each other at two points,
					\item $D=3(L_1+L_2+L_3)$, where $L_1,L_2,L_3$ are three distinct lines with no common intersection;
				\end{enumerate}
				\item[$(3,3)$] $D$ has at worst finitely many $\boldsymbol{A}_1$ singularities or $D$ has precisely three $\boldsymbol{A}_2$ singularities;
				\item[$(3,4)$] $D$ is a quartic surface satisfying one of the following:
				\begin{enumerate}
					\item $D$ is smooth or has ADE singularities of type $\boldsymbol{A}_n, \boldsymbol{D}_n, \boldsymbol{E}_6, \boldsymbol{E}_7, \boldsymbol{E}_8$,
					\item $D$ has a double point $P$ of type $\widetilde {\boldsymbol{E}}_8$ and some ADE singularities of type $\boldsymbol{A}_n$, $\boldsymbol{D}_n$, $\boldsymbol{E}_6$, $\boldsymbol{E}_7$, $\boldsymbol{E}_8$ such that no line in $D$ contains $P$,
					\item $D$ is singular along an ordinary nodal curve $C$ and some ADE singularities of type $\boldsymbol{A}_n$, $\boldsymbol{D}_n$, $\boldsymbol{E}_6$, $\boldsymbol{E}_7$, $\boldsymbol{E}_8$. Either $D$ is irreducible and $C$ is a nonsingular curve of degree $2$ or $3$ with four simple pinch points (i.e. locally analytically isomorphic to $x_1^2+x_2^2x_3=0$) or $D$ consists of two quadric surfaces which intersect transversely along a nonsingular elliptic curve of degree $4$,
					\item the singular locus of $D$ consists of a double point $P$  locally analytically isomorphic to $x_1x_2x_3+x_1^2+x_2^3=0$ and some ADE singularities of type $\boldsymbol{A}_n, \boldsymbol{D}_n, \boldsymbol{E}_6, \boldsymbol{E}_7, \boldsymbol{E}_8$ such that no line in $D$ contains $P$,
					\item either $D$ is singular along a strictly quasi-ordinary nodal curve $C$ and some ADE singularities of type $\boldsymbol{A}_n, \boldsymbol{D}_n, \boldsymbol{E}_6, \boldsymbol{E}_7, \boldsymbol{E}_8$, such that no line in $D$ contains a double pinch point (i.e. locally analytically isomorphic to $x_1^2+x_2^2g=0$ where $g\in \mathbb C[[x_2,x_3]], g=x_3^2 \mod x_2$), or $C$ is a nonsingular rational curve of degree $2$ and $D$ has either two double pinch points on $C$ or one double pinch point and two simple pinch points on $C$,
					\item either the singular locus of $D$ consists of two double points of type $\widetilde {\boldsymbol{E}}_8$ or it consists of two double points of type $\widetilde {\boldsymbol{E}}_7$ and some ADE singularities of type $\boldsymbol{A}_n, \boldsymbol{D}_n, \boldsymbol{E}_6, \boldsymbol{E}_7, \boldsymbol{E}_8$,
					\item the singular locus of $D$ consists of two skew lines, each of which is an ordinary nodal curve with four
					simple pinch points,
					\item $D$ consists of a plane and a cone over a nonsingular cubic curve in the plane,
					\item the singular locus of $D$ consists of a nonsingular, rational curve $C$ of degree 2 or 3, and some
					ADE singularities of type $\boldsymbol{A}_n$, $\boldsymbol{D}_n$, $\boldsymbol{E}_6$, $\boldsymbol{E}_7$, $\boldsymbol{E}_8$,  $C$ is a strictly quasi-ordinary (i.e. $D$ has no points on $C$ of multiplicity larger than $2$ and each pinch point on $C$ is a double pinch point), nodal curve and the set of pinch points consists of two double pinch points such that each double pinch point lies on a line in $D$,
					\item $D$ consists of two, nonsingular, quadric surfaces which intersect in a reduced curve $C$ of arithmetic genus $1$. $C$ consists of two or four lines such that its singularities consist of $2$ or $4$ ordinary double points; the dual graph of $C$ is homeomorphic to a circle,
					\item $D$ consists of four planes with normal crossings.
				\end{enumerate}
			\end{itemize}
				In addition, let $1>\beta>\frac{3+8\sqrt[3]{2}}{15}$ and $D\subset \mathbb{P}^3$ be one of the following quintic surfaces:
				\begin{enumerate}
					\item $D$ is either a smooth quintic surface or has only isolated singularities of type ADE,
					\item $D$ has only isolated double point singularities and isolated triple point singularities with reduced tangent cone,
					\item $D$ is a normal surface such that each of its singularities has either Milnor number smaller than 22 or modality smaller than 5,
					\item $D$ has isolated minimal elliptic singularities, 
					\item $D$ is the union of a double smooth quadric surface and a transverse hyperplane,
					\item $D$ is a generic quintic surface with a curve of singularities of multiplicity three such that the 	support of that curve does not contain any lines,
					\item $D$ is irreducible with a curve of singularities supported on a reduced curve $C$ such that the genus of $C$ is greater than one, $C$ does not contain any lines, and $D$ does not have an additional line of singularities.
				\end{enumerate}
		Then $(\mathbb P^3, (1-\beta)D)$ is log K-polystable.			
		\end{prop}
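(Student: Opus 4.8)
The proof is an application of Theorem \ref{theorem:main-Pn}. For every pair $(n,d)\in\Omega$ one has $n\in\{2,3\}$, so the Gap Conjecture \ref{conjecture:gap} holds unconditionally (by Liu in dimension two and Liu--Xu in dimension three), and hence for $\beta\in(\beta_0,1)$ the theorem identifies log K-polystability of $(\mathbb{P}^n,(1-\beta)D)$ with GIT-polystability of $[D]$ for the natural $\mathrm{SL}(n+1)$-action on $\mathbb{P}(H^0(\mathcal{O}(d)))$. The ``if and only if'' statement therefore reduces, case by case, to transcribing the known classification of GIT-polystable degree-$d$ hypersurfaces into the stated geometric conditions; both implications come for free from the equivalence. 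For the boundary case $(3,3)$, where $D$ is a Fano cubic surface and $d=n<n+1$, the same reduction applies: the volume computation and Gap-Conjecture argument of Proposition \ref{prop:GH-Pn-hypersurfaces} go through verbatim for $d=n$ (with $\beta_0$ still given by \eqref{eq:beta0-Pn}), together with the K-polystable $\Rightarrow$ GIT-polystable direction, so the relevant GIT problem is simply the classical one for cubic surfaces.

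First I would treat the plane-curve cases $(2,3),(2,4),(2,5),(2,6)$, reading off the polystable strata from the respective GIT analyses of plane cubics, quartics, quintics and sextics; in each case semistability is controlled by the multiplicities and contact orders at singular points, and the closed orbits give the strictly polystable representatives (for instance the triangle of lines for $(2,3)$ and the double conic for $(2,4)$). The sextic case is the richest, being tied to the GIT of plane sextics underlying degree-two K3 surfaces, and its eleven polystable families must be matched one-to-one against the minimal-orbit representatives. For $(3,3)$ the answer is the classical Hilbert--Mumford classification of cubic surfaces: GIT-stability amounts to having at worst $\boldsymbol{A}_1$ singularities, with the unique $3\boldsymbol{A}_2$ surface providing the single strictly polystable (non-stable) orbit, exactly matching the two alternatives stated for $(3,3)$.

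The quartic-surface case $(3,4)$ is the most laborious: here I would invoke the detailed GIT classification of quartic surfaces in $\mathbb{P}^3$, obtained through the analysis of minimal orbits and the geometry of their singular loci in the spirit of Shah's work, and translate each semistable/polystable stratum into the enumerated conditions (a)--(k), taking care with the double-point and pinch-point types. The quintic-surface statement is separate precisely because no complete GIT classification of quintics in $\mathbb{P}^3$ is available, so there one can only assert \emph{sufficient} conditions. Applying Theorem \ref{theorem:main-Pn} with $(n,d)=(3,5)$ --- for which \eqref{eq:beta0-Pn} yields $\beta_0=\frac{3+8\sqrt[3]{2}}{15}$ --- together with the available partial GIT-stability results for quintic surfaces, shows that each listed family is GIT-polystable and hence log K-polystable, which is exactly the one-directional claim made for quintics.

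The main obstacle is combinatorial rather than conceptual: the bulk of the work lies in verifying that the external GIT classifications for $(2,6)$, $(3,4)$ and $(3,5)$ partition the (semi/poly)stable loci into precisely the enumerated families, with no stratum omitted or double-counted, and in correctly distinguishing closed (polystable) orbits from merely semistable ones. Reconciling the differing conventions for ADE, $\widetilde{\boldsymbol{E}}$, pinch-point and quasi-ordinary singularity types across the various sources --- and checking that the minimal-orbit representatives genuinely carry the asserted singularities --- is where the care is required.
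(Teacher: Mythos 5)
Your proposal matches the paper's proof, which is exactly this one-line reduction: apply Theorem \ref{theorem:main-Pn} (unconditional for $n=2,3$ by the known cases of the Gap Conjecture) and quote the classifications of GIT-polystable hypersurfaces from the literature (Mumford and Mukai for $(2,3),(2,4),(3,3)$, Laza for $(2,5)$, Shah for $(2,6)$ and $(3,4)$, and the partial results for quintic surfaces giving only the one-directional claim for $(3,5)$ with $\beta_0=\frac{3+8\sqrt[3]{2}}{15}$). Your separate treatment of $(3,3)$ is in fact a genuine improvement over the paper's write-up: since there $d=3<n+1=4$, this case falls outside the literal hypotheses of Theorem \ref{theorem:main-Pn}, a point the paper's proof passes over in silence, and your observation that Proposition \ref{prop:GH-Pn-hypersurfaces} and the volume estimate go through verbatim for $d=n$ is the right way to close that gap.
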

		\begin{proof}
			This is a straight-forward application of Theorem \ref{theorem:main-Pn} and the classification of GIT polystable hypersurfaces. For $(n, d)=(2,3), (2,4), (3,3)$ see \cite[Examples 7.12, 7.13 and theorems 7.14 and 7.24]{mukai-book-moduli} and \cite[page 80]{MumfordGIT}. For $(n,d)=(2,5)$ see \cite[page 80]{MumfordGIT} and \cite[Corollary 5.14 and page 149]{Laza-thesis}. For $(n,d)=(2,6)$, see \cite[Theorem 2.4]{Shah-GIT-sextic-plane-curves}. For $(n,d)=(3,4)$ see \cite[Theorem 2.4]{Shah-GIT-sextic-plane-curves}. For $(n,d)=(3,5)$ see \cite{Gallardo-GIT-quintics}.
		\end{proof}

		\subsection{Wall-crossing}\label{wall}
		
		It is natural to ask what happens in the previous examples when the value of $\beta$ becomes smaller. The natural expectation is that birational modifications are occurring, giving rise to wall-crossing phenomena in the space of log K-stability conditions.  This paradigm should be thought as an analogue of variations of GIT stability for the log CM line bundle. There are two approaches when considering this analogy.
		
		For the fist approach we must recall that it has been conjectured by Odaka-Spotti-Sun that the CM line bundle should be ample in the good moduli (in the sense of Alper \cite{Alper-good-moduli}) of families of K-polystable Fano varieties \cite{Odaka-Spotti-Sun}.

		It is natural to expect a similar conjecture for the log CM line bundle over families of log K-polystable pairs to hold. However, in this case the positivity of the CM line bundle is dependent on the choice of K-stability condition $\beta$. If the log version of the Odaka-Spotti-Sun conjecture holds, as $\beta$ is perturbed  the variation of the positivity of the CM line bundle over families would detect wall-crossing phenomena in the compact moduli space.
		
			\begin{rmk}
				The ampleness of the log CM line bundle has been recently established in the pre-print \cite{XZ}.
			\end{rmk}
			
			 In variations of GIT quotients these wall-crossing modifications tessellate the space of GIT stability conditions into ``chambers'' and ``walls'' (see \cite{Gallardo-JMG-cubic-surfaces} for a full description of this tessellation in the case of cubic surfaces and hyperplane sections). The GIT quotient remains constant when the stability condition is perturbed within a chamber, while it undergoes a \emph{Thaddeus flip} \cite{Thaddeus-vGIT, Dolgachev-Hu-vGIT} when it is perturbed through a wall.
		
		The second approach to understand the analogy between variations of log K-stability and variations of GIT stability is with regards to particular examples: it is not surprising that in some cases the setting of variations of log K-stability cannot be reduced only to the study of classical variations of GIT quotients. For example, when we consider the log pair $(\mathbb{P}^2, (1-{\beta_4})H_4)$, it follows by the study of del Pezzo surfaces of degree $2$  \cite{Odaka-Spotti-Sun} that for $\beta=\frac{1}{2}$ the GIT picture above is not accurate any more. More precisely:
		\begin{prop}[{\cite{Odaka-Spotti-Sun}}] The K-polystable compactification  of the moduli of log pairs $(\mathbb{P}^2, (1-\beta) H_4)$ where $H_4$ is a plane quartic curve and $\beta=\frac{1}{2}$ is given by the blow-up of the GIT quotient of quartic plane curves at the point representing the double conic. Over this point the Gromov-Hausdorff limits are  log pairs of type $(\mathbb{P}(1,1,4), \frac{1}{2}H)$, where $H$ is the hyperelliptic curve $z_3^2=f_8(z_1 ,z_2 )$, where $f_8$ is a  polystable binary octic. 
		\end{prop}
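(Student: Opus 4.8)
The plan is to reduce the statement to the classification of Gromov--Hausdorff limits of degree-two del Pezzo surfaces established in \cite{Odaka-Spotti-Sun}, by passing to the associated double cover. Given a plane quartic $C\subset\mathbb{P}^2$, let $\rho\colon S_C\to\mathbb{P}^2$ be the double cover branched along $C$; when $C$ is smooth $S_C$ is a smooth del Pezzo surface of degree two, and every such surface arises in this way, with $\rho$ recovered from the canonical Geiser involution $\sigma$. Since $-K_{S_C}=\rho^*\mathcal{O}(1)$ and the branch coefficient $1-\beta=\tfrac12$ corresponds to ramification index two (equivalently to cone angle $2\pi\beta=\pi$), the standard orbifold correspondence identifies $\sigma$-invariant K\"ahler--Einstein metrics on $S_C$ with conical K\"ahler--Einstein metrics of angle $\pi$ along $C$ on $(\mathbb{P}^2,\tfrac12 C)$. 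Because $\sigma$ is canonical, I would first promote this to a homeomorphism $\overline{M}^{GH}_{\beta}\cong\overline{M}^{GH}_{2}$ at $\beta=\tfrac12$, compatible with the forgetful map to the GIT quotient of plane quartics, and check that it extends continuously over the singular limits.

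Second, I would invoke the degree-two theorem of \cite{Odaka-Spotti-Sun}: the compactification $\overline{M}^{GH}_{2}$ is homeomorphic to the blow-up of the GIT quotient $M^{GIT}_4=\mathbb{P}(H^0(\mathbb{P}^2,\mathcal{O}(4)))^{ss}\git\mathrm{SL}(3)$ of plane quartics at the single point $p_0$ representing the double conic $2Q$. The geometric reason the naive identification with $M^{GIT}_4$ breaks down exactly at $p_0$ is that $2Q$ is GIT-polystable, yet the double cover branched along $2Q$ splits as $\{w=q\}\cup\{w=-q\}$ (with $q$ an equation of $Q$) and is therefore not a normal klt del Pezzo degeneration; consequently a positive-dimensional family of genuine Gromov--Hausdorff limits sits over $p_0$, forcing the modification. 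Transporting this through the double-cover dictionary of the first step gives $\overline{M}^{GH}_{\beta}\cong\mathrm{Bl}_{p_0}M^{GIT}_4$ at $\beta=\tfrac12$, which is the first assertion of the proposition.

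Third, I would identify the limits lying over $p_0$. In a one-parameter family of smooth quartics $C_t$ degenerating to $2Q$, the modulus that survives is the residual intersection $C_t\cap Q$, a divisor of degree $C\cdot Q=8$ on $Q\cong\mathbb{P}^1$, i.e. a binary octic $f_8$ defined up to the $\mathrm{SL}(2)$ acting on $Q$. Correspondingly the base $\mathbb{P}^2$ degenerates to the weighted projective space $\mathbb{P}(1,1,4)$, which has the same anticanonical degree $(-K)^2=36/4=9$ as $\mathbb{P}^2$ and whose singularity $\tfrac14(1,1)$ is a $T$-singularity, hence a legitimate $\mathbb{Q}$-Gorenstein smoothable degeneration; the boundary divisor limits to the hyperelliptic curve $H=\{z_3^2=f_8(z_1,z_2)\}\in|\mathcal{O}(8)|$, with $z_1,z_2,z_3$ of weights $1,1,4$. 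A direct numerical check confirms that this is the correct log-anticanonical datum: $\tfrac12 H\sim\tfrac23(-K_{\mathbb{P}(1,1,4)})$ matches $\tfrac12 C\sim\tfrac23(-K_{\mathbb{P}^2})$ and preserves the log degree, since $(-(K_{\mathbb{P}(1,1,4)}+\tfrac12 H))^2=1=(-(K_{\mathbb{P}^2}+\tfrac12 C))^2$. Finally, log K-polystability of $(\mathbb{P}(1,1,4),\tfrac12 H)$ translates, once more through the double cover, into GIT-polystability of $f_8$ as a binary octic, so the exceptional divisor (five-dimensional, matching $\dim(\mathbb{P}^8\git\mathrm{SL}(2))=5$) is identified with the GIT quotient of binary octics.

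The main obstacle is not the above bookkeeping but the analytic core of the correspondence: showing that the double-cover dictionary stays valid across the degenerate locus, so that Gromov--Hausdorff limits on the pair side are exactly the $\sigma$-quotients of the degenerate del Pezzo limits, including at $p_0$, where reducedness of the branch divisor is lost and the ambient surface itself jumps from $\mathbb{P}^2$ to $\mathbb{P}(1,1,4)$. Controlling the possible limit singularities there requires the full strength of \cite{Chen-Donaldson-Sun-Kstability-all} together with normalised-volume bounds of the type exploited in Propositions \ref{prop:GH-limit-cubic pairs} and \ref{prop:GH-Pn-hypersurfaces}; it is precisely this step that is carried out in \cite{Odaka-Spotti-Sun}, and which I would import rather than reprove.
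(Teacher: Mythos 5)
Your proposal is correct and matches the paper's treatment: the proposition is stated with the citation to \cite{Odaka-Spotti-Sun} precisely because it is obtained by transporting the degree-two del Pezzo result of that paper through the Geiser double-cover dictionary at $\beta=\tfrac12$, exactly as you describe, and the paper offers no independent proof beyond that citation. Your numerical consistency checks (anticanonical degrees, the $T$-singularity $\tfrac14(1,1)$, and the $5$-dimensional exceptional divisor matching $\dim(\mathbb{P}^8\git\mathrm{SL}(2))$) and your identification of the obstruction at the double conic are all in agreement with the source.
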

		
		De Borbon \cite{deBorbon-cubic-curves} has informally discussed the case  $(\mathbb{P}^2, (1-{\beta})H_3)$, where $H_3$ is a plane cubic curve. More generally, it seems very intriguing to fully understand the case of  $(\mathbb{P}^2, (1-{\beta})H_d)$, even when $H_d$ is a hypersurface whose degree $d$ is small as $\beta$ gets smaller. In fact, one can hope to be able to study such problem with the help of the Hacking-Prokhorov classification of $\mathbb{Q}$-Gorenstein degenerations of the projective plane \cite{Hacking-Prokhorov}. Any such degeneration is given by a weighted projective plane of the form $\mathbb{P}(a^2,b^2,c^2)$ satisfying a Markov type equation $a^2+b^2+c^2=3abc$. The log curve is going to degenerate to certain weighted hypersurfaces. Thus one should be able to  study the K-stability problem by ``gluing'' quotients as in \cite{Odaka-Spotti-Sun}.
		
		We should moreover stress that another situation in which wall-crossing phenomena for the K-polystable compact moduli space can be observed is given in Fujita's work on weighted hyperplane arrangements \cite{Fujita-hyperplane-arrangements}.
		
		Regarding the case of a cubic surface and a hyperplane section, we should remark that some classical wall-crossing picture is described in \cite{Gallardo-JMG-cubic-surfaces}. Our present estimate for $\beta$ is not strong enough to reach the first wall in the space of K-stability conditions. Hence our main theorem is unable to realise a wall-crossing phenomena. Moreover, we should remark that for values of $\beta$ smaller than our bound, one a-priori is forced to consider degenerations of the cubic surfaces to singular non-canonical del Pezzo surfaces of degree three, embedded in three-dimensional weighted projective spaces. These situations may be addressed through a more technical study of K-stability in such explicit situations. In particular, it is highly probable that for small $\beta$, the compact moduli of K-polystable pairs will not coincide with the one induced in GIT$_t$-stability. It would be very interesting to find whether this expectation is correct and what the natural replacement should be.
		
		\subsection{Further evidence supporting Conjecture \ref{conjecture:main}}
		In this section we make some further comment on Conjecture \ref{conjecture:main}. In particular we can check that the conjecture holds for the case of the unique del Pezzo surface $S$ of degree $3$ with three $\boldsymbol{A}_2$ singularities and $l=1$. This surface is given by $S=\{x_0x_2x_3=x_1^3\}\subset \mathbb P^3$, and it is toric. In order to see if the conjectural picture holds, we first need to discuss the geometry of the surface. The singular points of $S$ are located at:
		\begin{align}
	p_0=[1:0:0:0],  && p_2=[0:0:1:0],  & & p_3=[0:0:0:1].\label{eq:points_3A2}
	\end{align}
	$S$ has three lines $\overline{p_0p_2}$, $\overline{p_0p_3}$, and $\overline{p_2p_3}$ which join the corresponding singular points. There is an unique hyperplane section $H_*\coloneqq \{x_1=0\}|_S=\overline{p_0p_2}+\overline{p_0p_3}+\overline{p_2p_3}\sim -K_S$ that contains the three singularities. In addition, there are three hyperplanes $H_{23}=\{x_0=0\}$, $H_{03}=\{x_2=0\}$, $H_{02}=\{x_3=0\}$,  which induce non-reduced hyperplane sections $H_{ij}|_S=3\overline{p_ip_j}$.
	
	\begin{lemma}\label{lemma:classification-3A2-anticanonical} Let $\mathcal H_i \subset |-K_S|=|\mathcal O_S(1)|\cong (\mathbb P^3)^*$ be the $2$-dimensional linear system  (or net) whose base locus is the singular point $p_i$ and $j,k$ be distinct indices such that $\{i,j,k\}=\{0,2,3\}$. The net $\mathcal H_i$ contains three $1$-dimensional linear systems (or pencils) $\mathcal H_i^1, \mathcal H_i^2, \mathcal H_i^3\subset \mathcal H_i$ whose common intersection is empty and such that the following hold:
		\begin{enumerate}[(i)]
			\item the elements of $-K_S\setminus\left(\bigcup_{i=1}^3\mathcal H_i	\right)$ are the hyperplane sections which do not contain any of the points $p_0,p_2,p_3$,
			\item the elements of $\mathcal H_i\setminus (\mathcal H_i^1\cup \mathcal H_i^2\cup \mathcal H_i^3)$  are irreducible plane cubic curves with precisely one nodal ($\boldsymbol{A}_1$) singularity supported at $p_i$,
			\item $\mathcal H_i^2\cap \mathcal H_i^3=H_*|_S=\overline{p_0p_2}+\overline{p_0p_3}+\overline{p_2p_3}$, $\mathcal H_i^1\cap \mathcal H_i^2=3\overline{p_ip_j}$ and $\mathcal H_i^1\cap \mathcal H_i^3=3\overline{p_ip_k}$,
			\item the elements of $\mathcal H_i^2\setminus (\mathcal H_i^1\cup \mathcal H_i^3)=\mathcal H_i^2\setminus  \{H_*|_S\cup 3\overline{p_ip_j}\}$ (respectively, $\mathcal H_i^3\setminus (\mathcal H_i^1\cup \mathcal H_i^2)=\mathcal H_i^3\setminus  \{H_*|_S\cup 3\overline{p_ip_k}\}$) are reducible plane cubic curves $\overline{p_ip_j}+C$ where $C$ is an irreducible plane conic containing $p_i$ and $p_j$ and coplanar to $\overline{p_ip_j}$ (respectively, $\overline{p_ip_k}+C$ where $C$ is an irreducible plane conic containing $p_i$ and $p_k$ and coplanar to $\overline{p_ip_k}$),
			\item the elements of $\mathcal H_i^1\setminus \{3\overline{p_ip_j}\cup 3\overline{p_ip_j}\}$ are irreducible plane cubic curves intersecting $\overline{p_ip_j}$ and $\overline{p_ip_k}$ only at $p_i$, intersecting  $\overline{p_jp_k}$ away from $p_j$ and $p_k$ and with precisely one cuspidal ($\boldsymbol{A}_2$) singularity supported at $p_i$.
		\end{enumerate}
	\end{lemma}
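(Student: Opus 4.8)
The plan is to work directly with the anticanonical linear system $|-K_S|=|\mathcal O_S(1)|\cong(\mathbb P^3)^*$, writing a section as $H=\{a_0x_0+a_1x_1+a_2x_2+a_3x_3=0\}$ and $D=H|_S$. Since $\mathcal H_i$ is by definition the net of sections through the singular point $p_i$, it is cut out by the single linear condition $a_i=0$; in particular $\mathcal H_0\cong\{[a_1:a_2:a_3]\}\cong\mathbb P^2$. The toric surface $S=\{x_0x_2x_3=x_1^3\}$ carries the evident $S_3$-action permuting the variables $x_0,x_2,x_3$ (with a rescaling preserving the equation), which permutes $p_0,p_2,p_3$ and hence the three nets; so I would prove all assertions only for $i=0$ and transport them by this symmetry. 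Claim (i) is then immediate, since $|-K_S|\setminus\bigcup_i\mathcal H_i$ consists exactly of the sections vanishing at none of the $p_i$.

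Next I would pin down the three pencils. The distinguished members $H_*|_S$, $3\overline{p_0p_2}=\{x_3=0\}|_S$ and $3\overline{p_0p_3}=\{x_2=0\}|_S$ correspond to the three coordinate points $[1:0:0]$, $[0:0:1]$, $[0:1:0]$ of $\mathcal H_0=\mathbb P^2$. I would then \emph{define} the three pencils to be the three coordinate lines $\mathcal H_0^1=\{a_1=0\}$, $\mathcal H_0^2=\{a_2=0\}$, $\mathcal H_0^3=\{a_3=0\}$. Claim (iii) becomes the incidence of coordinate lines and points: $\mathcal H_0^2\cap\mathcal H_0^3=\{[1:0:0]\}=H_*|_S$, $\mathcal H_0^1\cap\mathcal H_0^2=\{[0:0:1]\}=3\overline{p_0p_2}$ and $\mathcal H_0^1\cap\mathcal H_0^3=\{[0:1:0]\}=3\overline{p_0p_3}$, with empty common intersection.

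The geometric content of (ii), (iv) and (v) I would extract by substituting the linear relation defining $H$ into $f=x_0x_2x_3-x_1^3$ and analysing the resulting plane cubic in $H\cong\mathbb P^2$. For a generic member of $\mathcal H_0$ (all $a_i\neq 0$), eliminating $x_1$ and passing to the chart $x_0=1$ gives a local equation at $p_0$ of the form $x_2x_3+a_1^{-3}(a_2x_2+a_3x_3)^3=0$, whose tangent cone $x_2x_3$ splits into two distinct factors, so $p_0$ is a node; since the only lines on $S$ are $\overline{p_0p_2},\overline{p_0p_3},\overline{p_2p_3}$ (classical for the $3\boldsymbol A_2$ cubic, see \cite{classificationcubics}) and a generic member contains none of them, $D$ is irreducible, whence the node is its unique singular point by the genus bound for plane cubics, giving (ii). For $\mathcal H_0^2=\{a_2=0\}$ the substitution factors $D$ as $x_3\big(x_0x_2+a_1^{-3}a_3^3x_3^2\big)=0$, i.e. $\overline{p_0p_2}$ plus a residual conic $C$ which, for $a_1,a_3\neq 0$, is irreducible and passes through $p_0$ and $p_2$; the symmetric computation for $\mathcal H_0^3=\{a_3=0\}$ yields $\overline{p_0p_3}+C$, proving (iv). Finally, for $\mathcal H_0^1=\{a_1=0\}$ the substitution $x_2=-\tfrac{a_3}{a_2}x_3$ gives the irreducible cuspidal cubic $x_1^3+\tfrac{a_3}{a_2}x_0x_3^2=0$ with an $\boldsymbol A_2$ cusp at $p_0$; checking directly that $H$ meets each of $\overline{p_0p_2},\overline{p_0p_3}$ only at $p_0$ and meets $\overline{p_2p_3}$ in the single point $[0:0:a_3:-a_2]$ away from $p_2,p_3$ establishes (v).

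The only non-formal inputs are the two facts used in (ii): that $S$ carries exactly the three lines $\overline{p_0p_2},\overline{p_0p_3},\overline{p_2p_3}$, and that an irreducible plane cubic has arithmetic genus $1$ and hence at most one singular point. Both are standard, so I expect the main effort to be bookkeeping — carrying out the substitutions cleanly and tracking which vanishing coefficient produces which degeneration — rather than any genuine difficulty. The one step deserving care is upgrading ``nodal'' to ``irreducible with a single node'' for the generic member, which is precisely where the classification of lines on $S$ is needed.
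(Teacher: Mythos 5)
Your proposal is correct and follows essentially the same route as the paper's proof: reduce to $i=0$ by the coordinate symmetry, identify $\mathcal H_0$ with $\{c_1x_1+c_2x_2+c_3x_3=0\}$, take the three pencils to be the coordinate lines, and verify (ii)--(v) by substituting the linear relation into $x_0x_2x_3-x_1^3$ and reading off the node, the line-plus-conic factorisation, and the cusp. The only difference is that you justify irreducibility and uniqueness of the singular point of the generic member explicitly (via the three lines on $S$ and the genus bound), where the paper merely asserts it; this is a welcome addition, not a divergence.
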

	\begin{proof}
	Part (i) is automatic from \eqref{eq:points_3A2}. Without loss of generality, let $i=0$ (the other cases are symmetrical). We have that
	$$\mathcal H_0=\{(x_0:x_1:x_2:x_3)\in S\ : \ c_1x_1+c_2x_2+c_3x_3=0,\ (c_1:c_2:c_3)\in \mathbb P^2\}.$$
	The pencils $\mathcal H_0^1, \mathcal H_0^2,\mathcal H_0^3$ correspond to taking $c_1=0$, $c_2=0$ and $c_3=0$, respectively in the above description. From this description it is clear that (iii) holds. Let $H=\{c_1x_1+c_2x_2+c_3x_3=0\}|_S\in \mathcal H_0$. If $H$ is as in (ii), we may write $x_1=-\frac{c_2}{c_1}x_2-\frac{c_3}{c_1}x_3$ and equation of $H$ is given by
	$$x_0x_2x_3-\left(-\frac{c_2}{c_1}x_2-\frac{c_3}{c_1}x_3\right)^3=0$$
	which is easily seen to be irreducible with a nodal singularity at $(1:0:0)$ when $c_2,c_3\neq 0$, proving (ii). If $H\in \mathcal H_i^2\setminus (\mathcal H_i^1\cup \mathcal H_i^3)$, then $c_2=0$ and $c_1,c_3\neq 0$, so the equation of $H$ is
	$$x_0x_2x_3-\left( \frac{c_3}{c_1}x_3 \right)^3 = x_3 \left( x_0x_2- \left( \frac{c_3}{c_1}x_3 \right)^2 \right)=0,$$
	which corresponds to the situation described in (iv) (the case for $H\in \mathcal H_i^3\setminus (\mathcal H_i^1\cup \mathcal H_i^2)$ is analogous). Finally, if $H\in \mathcal H_i^1\setminus (\mathcal H_i^2\cup \mathcal H_i^3)$, then $c_1=0$, $c_2,c_3\neq 0$, $x_3=-\frac{c_2}{c_3}x_3$ and the equation of $H$ is $ \left(-\frac{c_2}{c_3}x_0x_2^2-x_1^3=0 \right)$, as in (v).
\end{proof}
	
	\begin{lemma}{\cite[Sec 4.9]{sakamaki2010automorphism}}
		\label{lemma:AutS}
		Let $\Sigma_3$ be the symmetric group of order $3$ acting naturally on the coordinates of $\mathbb P^3$ and 
		let $S$ be the cubic surface given by  $(x_0x_2x_3-x_1^3=0)$, then $\mathrm{Aut}(S)\cong(\mathbb C^*)^2 \rtimes \Sigma_3$.
	\end{lemma}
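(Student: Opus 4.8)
The plan is to turn this transcendental-looking question into a finite one by first showing that every automorphism of $S$ is the restriction of a projective linear transformation, and then solving the resulting stabiliser problem by elementary linear algebra.

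\textbf{Reduction to $\mathrm{PGL}(4,\mathbb{C})$.} Since $S$ has only $\boldsymbol{A}_2$ (hence du Val, hence canonical Gorenstein) singularities, $K_S$ is Cartier, and adjunction for the cubic hypersurface $S\subset\mathbb{P}^3$ gives $-K_S\cong\mathcal{O}_S(1)$, which is very ample. From $0\to\mathcal{O}_{\mathbb{P}^3}(-2)\to\mathcal{O}_{\mathbb{P}^3}(1)\to\mathcal{O}_S(1)\to 0$ one reads off $H^0(S,\mathcal{O}_S(1))\cong H^0(\mathbb{P}^3,\mathcal{O}(1))$, so the anticanonical embedding of $S$ recovers the given embedding in $\mathbb{P}^3$. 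As $-K_S$ is intrinsic, every $\phi\in\mathrm{Aut}(S)$ acts linearly on $H^0(S,-K_S)$ and thus extends to a unique element of $\mathrm{PGL}(4,\mathbb{C})$ preserving $S$, while conversely every projective transformation preserving $S$ restricts to an automorphism. Hence $\mathrm{Aut}(S)\cong\mathrm{Stab}_{\mathrm{PGL}(4,\mathbb{C})}(S)$, the stabiliser of the cubic form $F=x_0x_2x_3-x_1^3$ up to scalar.

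\textbf{The two subgroups.} I would first verify that $(\mathbb{C}^*)^2\rtimes\Sigma_3$ sits inside this stabiliser. The diagonal subgroup $\mathrm{diag}(t_0,t_1,t_2,t_3)$ sends $F$ to $t_0t_2t_3\,x_0x_2x_3-t_1^3\,x_1^3$, so it preserves $S$ exactly when $t_0t_2t_3=t_1^3$; modulo scalars this defines a two-dimensional torus $T\cong(\mathbb{C}^*)^2$. The group $\Sigma_3$ permuting $\{x_0,x_2,x_3\}$ and fixing $x_1$ preserves $F$ by symmetry, normalises $T$ by permuting $t_0,t_2,t_3$, and satisfies $T\cap\Sigma_3=\{1\}$ (a nontrivial permutation matrix is not diagonal modulo scalars).

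\textbf{The upper bound.} The singular locus $\mathrm{Sing}(S)=\{p_0,p_2,p_3\}$ is preserved by every automorphism, giving $\rho\colon\mathrm{Aut}(S)\to\mathrm{Sym}(\mathrm{Sing}(S))\cong\Sigma_3$, which is surjective since the coordinate permutations already realise all of $\Sigma_3$. The crux is to show $\ker\rho=T$: if $[A]$ fixes $p_0=[1:0:0:0]$, $p_2=[0:0:1:0]$, $p_3=[0:0:0:1]$, then in the standard basis the first, third and fourth columns of $A$ are $\alpha_0\mathbf{e}_0,\alpha_2\mathbf{e}_2,\alpha_3\mathbf{e}_3$ with $\alpha_i\neq 0$, and only the second column $\mathbf{v}=(v_0,v_1,v_2,v_3)$ is free. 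Imposing $F\circ A=cF$ and matching the coefficients of the monomials $x_0x_1x_3$, $x_0x_1x_2$ and $x_1x_2x_3$ forces $v_2=v_3=v_0=0$, so $A$ is diagonal with $v_1^3=\alpha_0\alpha_2\alpha_3$; that is, $\ker\rho=T$. The extension $1\to T\to\mathrm{Aut}(S)\to\Sigma_3\to 1$ then splits through the permutation section, yielding $\mathrm{Aut}(S)\cong T\rtimes\Sigma_3\cong(\mathbb{C}^*)^2\rtimes\Sigma_3$. The only genuinely non-formal step is the reduction to $\mathrm{PGL}(4,\mathbb{C})$, which converts the problem to finite linear algebra; the coefficient matching in $\ker\rho$ is routine but is the place where one must be careful not to overlook the off-diagonal contributions coming from the free $x_1$-column.
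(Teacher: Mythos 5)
Your proposal is correct and complete. Note that the paper itself offers no proof of this lemma at all: it simply cites \cite[Sec 4.9]{sakamaki2010automorphism}, where the automorphism groups of normal cubic surfaces with du Val singularities are worked out case by case. Your argument is therefore a self-contained replacement for that citation, and it hits the right points: the reduction to $\mathrm{PGL}(4,\mathbb C)$ via $-K_S\cong\mathcal O_S(1)$ and the isomorphism $H^0(\mathbb P^3,\mathcal O(1))\cong H^0(S,\mathcal O_S(1))$ (using $H^0=H^1=0$ for $\mathcal O_{\mathbb P^3}(-2)$) is the only genuinely geometric input, and it is exactly what makes the problem finite; the identification of $\mathrm{Sing}(S)=\{p_0,p_2,p_3\}$ gives the surjection onto $\Sigma_3$; and your coefficient check for the kernel is right --- expanding $F(Ax)=(\alpha_0x_0+v_0x_1)(\alpha_2x_2+v_2x_1)(\alpha_3x_3+v_3x_1)-v_1^3x_1^3$, the monomials $x_0x_1x_3$, $x_0x_1x_2$, $x_1x_2x_3$ carry coefficients $\alpha_0\alpha_3v_2$, $\alpha_0\alpha_2v_3$, $\alpha_2\alpha_3v_0$, whose vanishing forces $v_0=v_2=v_3=0$ and then $v_1^3=\alpha_0\alpha_2\alpha_3$, recovering precisely the two-torus of the paper's equation \eqref{eq:torus-action}. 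One cosmetic remark: since $F$ is irreducible (a reducible cubic would be singular along a curve, contradicting the isolated $\boldsymbol{A}_2$ singularities), preserving the hypersurface is indeed equivalent to $F\circ A=cF$, which you use implicitly; and the splitting of $1\to T\to\mathrm{Aut}(S)\to\Sigma_3\to1$ by permutation matrices, together with the evident permutation action on $T$, pins down the semidirect product structure as claimed.
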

Observe that $\mathrm{Aut}(S)$ is reductive and there is a natural action of $\mathrm{Aut}(S)$ on
\begin{equation}
	\label{eq:cubic-sections-identification}
	\mathbb H\coloneqq \mathbb P(H^0(S, -K_S))\cong \mathbb P (H^0(S, \mathcal O_S(1)))\cong (\mathbb P^3)^*\cong \mathbb P^3.
\end{equation}
We will describe this action and determine the stability of the GIT quotient $(\mathbb P(H^0(S, -K_S)))^{ss}\git\mathrm{Aut}(S)$ in two steps. First we will describe the action of the neutral component $\mathrm{Aut}^0(S)\cong \mathbb (\mathbb C^*)^2$ and the stability of $(\mathbb P(H^0(S, -K_S)))^{ss}\git\mathrm{Aut}^0(S)$ and then we will extend this analysis to $(\mathbb P (H^0(S, -K_S)))^{ss}\git\mathrm{Aut}(S)$. We will use the description of polystable points to prove Conjecture \ref{conjecture:main} for $X=S$ and $l=1$ and then show how the description of the GIT quotient can be used in combination with the moduli continuity method to prove a stronger statement. 

Fix a set of coordinates and let $\{x_0,x_1,x_2,x_3\}$ be the natural basis of the space $H^0(S, -K_S)$, which is projectivised in \eqref{eq:cubic-sections-identification}. The action of the neutral component of the automorphism group $(\mathbb C^*)^2\triangleleft\mathrm{Aut}^0(S)$, with coordinates $(t_0, t_2)$, on $H^0(S, -K_S)$ is defined over the basis elements by
	\begin{align}
	(t_0,t_2) \cdot [x_0,x_1,x_2,x_3]=[t_0x_0, x_1, t_2x_2, t_0^{-1}t_2^{-1}x_3].\label{eq:torus-action}
	\end{align}
Next, we describe the GIT quotient of  $\mathbb P(H^0(S, -K_S)) \cong \mathbb P^3$ with respect to the action of $(\mathbb C^*)^2$ described in \eqref{eq:torus-action}. 
It is well known that such quotients are not unique, but they rather depend on the $(\mathbb C^*)^2$-linearisation of the line bundle $\mathcal O_{\mathbb H}(1)$.  We follow the framework of \cite[Sec 7]{dolgachev2003lectures}.  
By the arguments that precede \cite[Corollary 7.1 and Theorem 7.2]{dolgachev2003lectures}, we have the short exact sequence of groups
$$
0
\to
\chi \left( (\mathbb C^*)^2 \right) 
\to 
\text{Pic}^{(\mathbb C^*)^2} \left( \mathbb H \right)
\to
\text{Pic} \left( \mathbb H \right)
\to
0,
$$
where $\chi \left( (\mathbb C^*)^2 \right)=\mathrm{Hom}((\mathbb C^*)^2,\mathbb C^*)$ is the group of rational characters of $(\mathbb C^*)^2$. Since  $\chi \left( (\mathbb C^*)^2 \right)  \cong \mathbb Z^2$ and $\text{Pic} \left( \mathbb H \right)
\cong \mathbb Z$, we obtain 
$
\text{Pic}^{(\mathbb C^*)^2} \left( \mathbb P^3 \right)
\cong 
\mathbb Z^3
$.
A $(\mathbb C^*)^2$-linearised bundle must of the form $\mathcal O_{\mathbb H}(m)$, so the linearisation is given by a linear representation of $(\mathbb C^*)^2$ in $\left( \mathbb C[x_0, x_1,x_2, x_3]_m \right)^*$, the dual of $\mathbb C[x_0, x_1,x_2, x_3]_m $. Such action is defined by
the following formula (see \cite[Example 8.2]{dolgachev2003lectures}):
$$
(t_0, t_2) \cdot x_{i_1} \cdots x_{i_m} \to t_0^{-a'}t_2^{-b'}x_{i_1} \cdots x_{i_m},
$$
for some integers $a'$ and $b'$. Then, the $(\mathbb C^*)^2$-linearised bundle can be indexed by the triples
$(m,a',b') \in \mathbb Z^3$. The GIT quotient does not change if we rescale the parameters $(m,a',b')$.
Hence, we can set $m=1$ and the GIT quotient will depend of two 
rational parameters $(a,b):=\left( \frac{a'}{m}, \frac{b'}{m} \right)$ which define the following action of $(\mathbb C^*)^2$ 
on $\left( \mathbb C[x_0, x_1,x_2, x_3]_{m=1} \right)^*$.
\begin{equation}\label{eq:linear}
(t_0,t_2) \cdot [x_0,x_1,x_2,x_3]=[
t_0^{-a+1}t_2^{-b}x_0, 
t_0^{-a}t_2^{-b}x_1, 
t_0^{-a}t_2^{-b+1}t_2x_2, 
t_0^{-a-1}t_2^{-b-1}x_3].
\end{equation}
	\begin{lemma}\label{lemma:StaC*2}
		Given the action of $(\mathbb C^*)^2$ on $\mathbb P(H^0(S, -K_S))$,
		 in \eqref{eq:torus-action}, linearised by $a=b=0$ in \eqref{eq:linear}, a hyperplane section $H$ is
		\begin{enumerate}[(i)]
			\item
			stable if and only if it does not contain any of the $\boldsymbol{A}_2$ singularities of $S$, i.e. $H\in |-K_S|\setminus\bigcup_{i=1}^3 \mathcal H_i$,
			\item 
			strictly polystable if and only if it intersects all three $\boldsymbol{A}_2$ singularities of $S$, i.e. $H=H_*$,
			\item			unstable if and only if it is a triple line or a cuspidal plane cubic curve whose cuspidal singularity is supported at one of the $\boldsymbol{A}_2$ singularities of $S$, i.e. $H\neq H_*$ but $H\in \mathcal H_i^j$ for some $i,j\in \{1,2,3\}$.
		\end{enumerate}
	\end{lemma}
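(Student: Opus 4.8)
The plan is to reduce the statement to a completely explicit toric GIT computation on $\mathbb{P}^3\cong\mathbb{P}(H^0(S,-K_S))$ and then translate the resulting numerical conditions back into the geometry of Lemma \ref{lemma:classification-3A2-anticanonical}. Writing a section as $h=c_0x_0+c_1x_1+c_2x_2+c_3x_3$, so that $(c_0:c_1:c_2:c_3)$ are homogeneous coordinates on $\mathbb{P}(H^0(S,-K_S))$, the action \eqref{eq:torus-action} linearised by $a=b=0$ as in \eqref{eq:linear} makes $c_0,c_1,c_2,c_3$ transform with the characters $w_0=(1,0)$, $w_1=(0,0)$, $w_2=(0,1)$, $w_3=(-1,-1)$ of $(\mathbb{C}^*)^2$, respectively. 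First I would record, following \cite[Sec.~7]{dolgachev2003lectures}, the numerical criterion for a torus: writing $\Xi(H)=\{w_i : c_i\neq 0\}$ for the set of active weights, the point $[H]$ is semistable if and only if $0\in\operatorname{Conv}(\Xi(H))$, stable if and only if $0$ lies in the topological interior of $\operatorname{Conv}(\Xi(H))$, and polystable if and only if $0\in\operatorname{relint}(\operatorname{Conv}(\Xi(H)))$.

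The heart of the proof is the elementary planar geometry of the four weights. The key observation is that $w_0+w_2+w_3=0$, so $\{w_0,w_2,w_3\}$ spans a triangle with $0$ in its interior, while $w_1=0$ is the origin itself. Running through the finitely many possible supports I would establish: (a) $0$ lies in the interior of $\operatorname{Conv}(\Xi(H))$ precisely when $c_0,c_2,c_3\neq 0$, so that the full triangle $w_0w_2w_3$ is present; (b) $\operatorname{Conv}(\Xi(H))=\{0\}$ precisely when $c_1\neq 0$ and $c_0=c_2=c_3=0$; and (c) $0\notin\operatorname{Conv}(\Xi(H))$ precisely when $c_1=0$ and at least one of $c_0,c_2,c_3$ vanishes. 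Here one uses that no two of the nonzero weights $w_0,w_2,w_3$ are antipodal, so no segment joining two of them meets $0$; this is exactly what forces the polystable-but-not-stable locus to collapse onto the single point $\{w_1\}$.

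It then remains to translate these three conditions using \eqref{eq:points_3A2} and Lemma \ref{lemma:classification-3A2-anticanonical}. Since $h(p_0)=c_0$, $h(p_2)=c_2$ and $h(p_3)=c_3$, the vanishing of $c_i$ is precisely the condition $p_i\in H$, i.e. $H\in\mathcal{H}_i$; thus (a) reads $H\notin\bigcup_i\mathcal{H}_i$, giving (i). Condition (b) is $h=x_1$, i.e. $H=H_*$, which is a fixed point of the torus and hence has closed orbit, giving the strictly polystable case (ii). Condition (c), namely $c_1=0$ together with some $c_i=0$ for $i\in\{0,2,3\}$, cuts out exactly the three pencils $\mathcal{H}_i^1$ (those defined by $c_1=0$ inside $\mathcal{H}_i$); by parts (iii) and (v) of Lemma \ref{lemma:classification-3A2-anticanonical} these consist precisely of the triple lines $3\overline{p_ip_j}$ and the cuspidal cubics whose cusp is supported at an $\boldsymbol{A}_2$ singularity, which is (iii).

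Finally, to make (ii) a genuine equivalence I would verify that every remaining section — those with $c_1\neq 0$ but some $c_i=0$, which are the nodal cubics and the reducible line-plus-conic members of $\mathcal{H}_i^2,\mathcal{H}_i^3$ — is strictly semistable with non-closed orbit: for each such $H$ the one-parameter subgroup realising $0$ as a boundary vertex of $\operatorname{Conv}(\Xi(H))$ satisfies $\lim_{t\to 0}\lambda(t)\cdot H=[x_1]=H_*$, so all these orbit closures contain $H_*$ and none is polystable. The main obstacle I anticipate is precisely this polystability bookkeeping: because $w_1=0$ sits at the origin, the distinction between stable, strictly polystable and merely semistable is governed by whether $0$ is an interior point, the whole of $\operatorname{Conv}(\Xi(H))$, or a boundary vertex of it, and one must take care that the geometric labels in Lemma \ref{lemma:classification-3A2-anticanonical} align with the correct special pencils $\mathcal{H}_i^1$ rather than with the full collection of pencils $\mathcal{H}_i^j$.
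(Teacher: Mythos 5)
Your proposal is correct and follows essentially the same route as the paper: decompose $H^0(S,-K_S)$ into the four weight lines, apply the centroid/numerical criterion for the torus, observe that $w_0+w_2+w_3=0$ while $w_1=0$, and translate the resulting vanishing conditions on the $c_i$ through Lemma \ref{lemma:classification-3A2-anticanonical}. The only cosmetic difference is that you package polystability via the relative-interior criterion, whereas the paper argues directly that $H_*$ is a torus fixed point and that every other strictly semistable section degenerates to it — a verification you also include, so the two arguments coincide in substance.
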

	\begin{proof}
		We follow the approach in \cite[Sec 9.4]{dolgachev2003lectures}.  The characters of $(\mathbb C^*)^2$ are $\chi((\mathbb C^*)^2)\cong \mathbb Z^2$. The action of $(\mathbb C^*)^2 $ on \eqref{eq:cubic-sections-identification} given in \eqref{eq:torus-action} decomposes $H^0(S, -K_S)$ into four one-dimensional spaces:
		\begin{align}
		H^0\left( S, -K_S \right)
		=
		V_{(1,0)}\oplus V_{(0,0)} \oplus V_{(0,1)} \oplus V_{(-1,-1)},\label{eq:vectorspace-decomposition}
		\end{align}
		where $V_{(a,b)}$ is the eigenspace of vectors invariant by $(t_0^a,t_0^b)$, where $(a,b)\in \chi((\mathbb C^*)^2)\cong \mathbb Z^2$. Indeed
		\begin{align*}
		V_{(1,0)}=\mathbb C \cdot x_0 && V_{(0,0)} = \mathbb C \cdot x_1 && V_{(0,1)}  = \mathbb C \cdot x_2&& V_{(-1,-1)}=\mathbb C \cdot x_3.
		\end{align*}
		Hence, each element of the canonical basis of $H^0(S, -K_S)$ generates an eigenspace for the action of $(\mathbb C^*)^2$.
		
		Given $v\in H^0(S, -K_S)$ its set of weights is
		$$\mathrm{w}(v)=\{\chi\in \mathrm{Hom}((\mathbb C^*)^2, \mathbb C^*)\cong \mathbb Z^2\ : \ v|_{V_\mathcal X}\neq \vec{0} \}.$$
		Given the decomposition \eqref{eq:vectorspace-decomposition} in eigenspaces with for the basis $\{x_0,x_1,x_2,x_3\}$, we have that for any $H=\{\sum_{i=0}^4c_ix_i=0\}|_S\in H^0(S, -K_S)$, $x_i\in \mathrm{w}(H)$ if and only if $c_i\neq 0$. Let $\overline{\mathrm{w}(H)}$ be the convex hull of $\mathrm{w}(H)$. The centroid criterion \cite[Theorem 9.2 9.9]{dolgachev2003lectures} states that $H\in H^0(S, -K_S)$ is GIT stable (GIT semistable, respectively) if and only if $0\in \mathrm{interior}(\overline{\mathrm{w}(H)})$ (if $0\in \overline{\mathrm{w}(H)}$, respectively). Figure \ref{fig:characters} shows the position of the characters in the lattice $\chi((\mathbb C^*)^2)$. 
\begin{figure}[t]
			\begin{tikzpicture}
	\draw[latex-latex, thin, draw=gray] (-2,0)--(2,0) node [right] {}; 
	\draw[latex-latex, thin, draw=gray] (0,-2)--(0,2) node [above] {}; 

	\foreach \Point/\PointLabel in {(0,1)/{x_2\equiv(0,1)}, (-1,-1)/{x_3\equiv (-1,-1)}}
	\draw[fill=black] \Point circle (0.05) node[above right] {$\PointLabel$};
	\draw[fill=black] (1,0) circle(0.05) node[above right] {${x_0\equiv (1,0)}$};
	\draw[fill=black] (0,0) circle(0.05) node[above left] {${x_1\equiv (0,0)}$};
	\draw [dotted, gray] (-2,-2) grid (2,2);
	
	\end{tikzpicture}
	\caption{Characters of the action $(\mathbb C^*)^2$ on $H^0(S, -K_S)$}
	\label{fig:characters}
\end{figure}
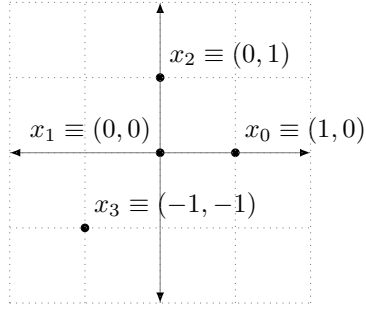		

		Let $H=\{\sum_{i=0}^4c_ix_i=0\}|_S\in \mathbb P(H^0(S, -K_S))$. A direct application of the centroid criterion implies that
		\begin{enumerate}[(i)]
			\item $H$ is stable if and only if $(c_0,c_2,c_3) \in (\mathbb C^*)^3$ and  $c_1 \in \mathbb C$ (i.e. if and only if $\{x_0, x_2, x_3\}= \mathrm{w}(H)$),
			\item 
			$H$ is strictly semistable if and only if one of $x_0,x_2,x_3$ is not in $\mathrm{w}(H)$ and  $x_1\in \mathrm{w}(H)$, i.e. if and only if one the following cases holds:
			\begin{align*}
			c_0=0,  c_1 \in \mathbb C^*, (c_2,c_3) \in \mathbb C^2;
			&&
			c_2=0,  c_1 \in \mathbb C^*, (c_0,c_3) \in \mathbb C^2; &&
			c_3=0,  c_1 \in \mathbb C^*, (c_0,c_2) \in \mathbb C^2,
			\end{align*}
			\item
			$H$ is unstable if and only if one of $x_0,x_2,x_3$ and $x_1\not\in \mathrm{w}(H)$, i.e. if and only if one the following cases holds:
			\begin{align*}
			c_0=c_1=0; & & c_1=c_3=0; & & c_1=c_2=0.
			\end{align*}
		\end{enumerate}
		In particular  $H_*=\{x_1=0\}|_S$ is strictly semistable and $(\mathbb C^*)^2$ acts trivially on it. Hence, $H_*$ is strictly polystable. Moreover, any strictly semistable pair in (ii) can be degenerated to $H_*$ by taking the limit of a one-parameter subgroup of $(\mathbb C^*)^2$. Hence $H_*$ is the only closed strictly semistable orbit, proving (ii) in the statement.
		
		$H$ is a hyperplane section that contains the point $p_i$ if and only if $c_i=0$, where $i=0,2,3$. Hence (i) follows from Lemma \ref{lemma:classification-3A2-anticanonical}. Finally, if $c_1=c_i=0$ for some $i\in \{0,2,3\}$, then $H\in \mathcal H_i$ and by inspection on the proof of Lemma \ref{lemma:classification-3A2-anticanonical}, either $H$ is a triple line, or a cuspidal curve whose cusp lies on an $\boldsymbol{A}_2$ singularity of $S$, proving (iii).
		
\end{proof}
	The above classification implies the following:
	\begin{cor}
		\label{corollary:stability-3A2}
		Consider the GIT quotient with respect to the linearisation $a=b=0$  in \eqref{eq:linear}
		$$
		(\mathbb P(H^0(S, -K_S)))^{ss}\git_{\vec 0}\mathrm{Aut}(S)
		$$
		Let $D\in |-K_S|\cong \mathbb P(H^0(S, -K_S))$, then
		\begin{enumerate}
			\item $D$ is stable if and only if $D \cap Sing(S) =\emptyset$,
			\item $D$ is polystable but not stable if and only if $D=H_*$, the union of the unique three lines in $S$, which contain the three $\boldsymbol{A}_2$ singularities,
			\item $D$ is unstable if and only if $D$ is either a cuspidal plane cubic curve with $Sing(D) \cap Sing(S) \neq \emptyset$ or $D$ is a triple line. 
		\end{enumerate}
	\end{cor}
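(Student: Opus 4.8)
The plan is to deduce Corollary~\ref{corollary:stability-3A2} from the torus computation in Lemma~\ref{lemma:StaC*2} by passing from the identity component $\mathrm{Aut}^0(S)\cong(\mathbb C^*)^2$ to the full group $\mathrm{Aut}(S)\cong(\mathbb C^*)^2\rtimes\Sigma_3$ (Lemma~\ref{lemma:AutS}), and then to translate the resulting algebraic conditions on the coefficients $(c_0:c_1:c_2:c_3)$ into the geometric descriptions of Lemma~\ref{lemma:classification-3A2-anticanonical}. The key structural input is that $\Sigma_3$ acts by permuting the three coordinates $\{x_0,x_2,x_3\}$ --- equivalently the three $\boldsymbol A_2$ points $p_0,p_2,p_3$ --- while fixing $x_1$, and that $\Sigma_3=\mathrm{Aut}(S)/\mathrm{Aut}^0(S)$ is \emph{finite}.

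First I would record the general GIT principle that, for a reductive group $G$ whose identity component $G^0$ has finite index, the semistable, unstable, stable and polystable loci for $G$ and for $G^0$ coincide. Indeed every one-parameter subgroup of $G$ factors through $G^0$, so the Hilbert--Mumford criterion gives the same (un)stable locus for both groups; and since $G\cdot x=\bigcup_{f}f\,(G^0\cdot x)$ is a finite union of translates of the single $G^0$-orbit $G^0\cdot x$ (all of the same dimension), the orbit $G\cdot x$ is closed, respectively closed with finite stabiliser, if and only if $G^0\cdot x$ is. Applying this with $G=\mathrm{Aut}(S)$ and $G^0=(\mathbb C^*)^2$ reduces every stability assertion of Corollary~\ref{corollary:stability-3A2} to the torus computation already carried out in Lemma~\ref{lemma:StaC*2}.

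With this reduction in hand, parts (1) and (3) are immediate. A section $D=\{\sum c_ix_i=0\}|_S$ is stable exactly when $c_0,c_2,c_3\neq0$ by Lemma~\ref{lemma:StaC*2}(i), and since $D$ passes through $p_i$ iff $c_i=0$ for $i\in\{0,2,3\}$, this is precisely the condition $D\cap\mathrm{Sing}(S)=\emptyset$, giving (1). For (3), Lemma~\ref{lemma:StaC*2}(iii) identifies the unstable locus as $\{c_1=0\}\cap\big(\{c_0=0\}\cup\{c_2=0\}\cup\{c_3=0\}\big)=\mathcal H_0^1\cup\mathcal H_2^1\cup\mathcal H_3^1$, and Lemma~\ref{lemma:classification-3A2-anticanonical}(v) (together with part (iii) there, which lists the triple lines as the distinguished members of each $\mathcal H_i^1$) shows these are exactly the triple lines and the cuspidal plane cubics whose cusp is supported at a singular point of $S$, which is the statement of (3).

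The one point deserving genuine care is part (2), the polystability. By the reduction it suffices to identify the unique closed strictly semistable $\mathrm{Aut}(S)$-orbit, which by Lemma~\ref{lemma:StaC*2}(ii) must be the one through $H_*=\{x_1=0\}|_S$. Here I would check explicitly that $H_*$ is fixed by the \emph{whole} group: the torus acts trivially on it (it is the unique weight-$(0,0)$ section), and $\Sigma_3$ fixes $x_1$ and only permutes the vanishing coefficients $c_0=c_2=c_3=0$, so $H_*$ is an $\mathrm{Aut}(S)$-fixed point and its orbit is closed. Conversely, any other strictly semistable $D$ has \emph{exactly one} of $c_0,c_2,c_3$ equal to zero, a property preserved by the coordinate permutations in $\Sigma_3$; hence $H_*$ (with all three vanishing) lies in the closure of $\mathrm{Aut}(S)\cdot D$ but not in the orbit itself, so this orbit is not closed. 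Thus $H_*$, the union of the three lines through the $\boldsymbol A_2$ points, is the unique strictly polystable section, completing (2). The main obstacle is precisely this last verification, namely ruling out that the finite symmetry $\Sigma_3$ creates new closed strictly semistable orbits; the argument above shows it cannot, because $\Sigma_3$ preserves the number of vanishing coefficients among $c_0,c_2,c_3$.
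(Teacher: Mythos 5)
Your proof is correct and follows essentially the same route as the paper's: reduce from $\mathrm{Aut}(S)=(\mathbb C^*)^2\rtimes\Sigma_3$ to its identity component (the paper does this by observing that every one-parameter subgroup factors through the torus up to composition with an element of $\Sigma_3$) and then quote Lemma~\ref{lemma:StaC*2}; your explicit check that the finite factor $\Sigma_3$ cannot create new closed strictly semistable orbits is actually more careful than the paper's one-line assertion of the polystable case. The only slip is the claim that a strictly semistable $D\neq H_*$ has \emph{exactly} one of $c_0,c_2,c_3$ vanishing --- it may have two --- but since your argument only uses that the number of vanishing coefficients among $c_0,c_2,c_3$ is preserved by $\mathrm{Aut}(S)$, nothing breaks.
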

	\begin{proof}Since 
		$\mathrm{Aut}(S)=(\mathbb C^*)^2 \rtimes \Sigma_3$, we have the exact sequence of groups
		$$
		1 \to (\mathbb C^*)^2 \to  \mathrm{Aut}(S) \to \Sigma_3 \to 1.
		$$
		Let $((t_0, t_2), \sigma)\in \mathrm{Aut}(S)$, where 
		$(t_0,t_2)\in(\mathbb C^*)^2$ and $\sigma \in\Sigma_3$. In particular,
		if $\eta(t)$ is a one-parameter subgroup of $\mathrm{Aut}(S)$, then $\eta(t)=(\lambda (t),\sigma)$, where $\lambda(t)$ is a one-parameter subgroup of $(\mathbb C^*)^2$ and for any $x \in \mathbb P(H^0(S, -K_S)) $
		$$
		\lim_{t \to 0} \eta (t) \cdot x
		=
		\lim_{t \to 0}( \lambda (t) ,\sigma) \cdot x
		=
		\lim_{t \to 0} \lambda (t) \cdot (\sigma \cdot x)
		$$
		Therefore, the hyperplane section $x$ is stable, semi-stable, or unstable with respect to $\eta$ if and only if $\sigma \cdot x$ is stable, semi-stable, unstable with respect to $\lambda$. In particular $x$ is (semi/poly)stable with respect to $\mathrm{Aut}(S)$ if and only if it is (semi/poly)stable with respect to $(\mathbb C^*)^2$. Hence, the result follows from Lemma \ref{lemma:StaC*2}.
	\end{proof}
		\begin{rmk}
	The proof of Lemma \ref{lemma:StaC*2} implies that the projectivisation $\mathbb P\left( (\mathbb C^*)^4 \right)\subset \mathbb P(H^0(S, -K_S))$ of the torus $(\mathbb C^*)^4$ parametrising general elements $H=\{\sum_{i=0}^{4}c_ix_i=0\}$ with all $c_i\neq 0$ is contained in the stable loci of the action linearised by $a=b=0$. Therefore, the GIT quotient is a rational curve i.e isomorphic to $\mathbb P^1$. It is natural to ask how stability changes under a variation of the parameters $(a,b)$. In fact, any such variation will yield at most three possible quotients: The empty space, a point, or $\mathbb P^1$. The latter only takes place when $a=b=0$. Indeed, we can see from Figure \ref{fig:characters} that if either $a\neq 0$ or $b\neq 0$ and $H$ is any hyperplane section, the trivial character $(0,0)$ will either be outside $\overline{w(H)}$ or belong to its boundary. Indeed, any non-trivial value of $(a,b)$ has the effect of translating the characters by the lattice isomorphism $(x,y) \to (x-a,y-b)$. Hence, we can apply the centroid criterion to conclude that any non-trivial choice of $(a,b)$ would render the general hyperplane section either strictly semistable or unstable. As a consequence, the GIT quotient will consist of either a point or it will be empty, respectively. 
	\end{rmk}

		Observe that GIT stability of $D\in |-K_S|$ in Corollary \ref{corollary:stability-3A2} coincides with K-stability of $(S, (1-\beta)D)$ in Proposition \ref{prop:K-stability-cubics-chamber1}. In particular, this verifies Conjecture \ref{conjecture:main} for the cubic surface $S$ with three $\boldsymbol{A}_2$ singularities and $l=1$. This lengthy method of verification can be applied to other cubic surfaces and hyperplane sections. One may expect that a more direct approach may be possible by repeating the arguments of the moduli continuity method. However, due to the technical requirement appearing in Theorem \ref{theorem:K-stability-implies-GIT} (ii), which requires that there is a K-polystable pair $(S, (1-\beta)D)$ such that $D$ is GIT polystable, we implicitly need to use Corollary \ref{corollary:stability-3A2}. In addition, when showing that the image of the continuous map between the Gromov-Hausdorff compactification and the GIT quotient is open and dense, we also require explicit knowledge of the GIT quotient, as we do not work on log smooth pairs any longer. On the other hand, the moduli continuity method provides a stronger result, which allows us to verify Conjecture \ref{conjecture:main} in this special example.
		\begin{prop}
			\label{prop:homeomorphism-3A2-moduli}
			Let $S$ be the unique cubic surface $S$ with three $\boldsymbol{A}_2$ singularities. Let $1>\beta>\beta_0\coloneqq \frac{\sqrt{3}}{2}$. There is a natural homeomorphism between the Gromov-Hausdorff compactification $\overline{M}_{\beta}^{GH}$ of pairs $(S, (1-\beta)D_i)$ where $D_i\in |-K_S|$ and the GIT quotient $\overline M^{GIT}\coloneqq \mathbb P(H^0(S, -K_S))^{ss}\git_{\vec 0} \mathrm{Aut}(S)$ for the natural representation of $\mathrm{Aut}(S)$ in $H^0(S, -K_S)$ linearised by $a=b=0$ as in \eqref{eq:linear}.  
		\end{prop}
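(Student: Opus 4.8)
The plan is to run the moduli continuity method of the Introduction (steps (1)--(6)) for the moduli problem in which the surface $S$ is held \emph{fixed} and only the anticanonical divisor varies, feeding into it the explicit stability description of Corollary~\ref{corollary:stability-3A2}. Step (1) is already complete: $\overline M^{GIT}$ together with the geometric description of its (semi/poly)stable points is exactly Corollary~\ref{corollary:stability-3A2}, and the relevant polarisation is the symmetric one $a=b=0$. Here one uses that, since $\mathrm{Aut}(S)=(\mathbb C^*)^2\rtimes\Sigma_3$ acts on the character lattice $\chi((\mathbb C^*)^2)\cong\mathbb Z^2$ through the standard $2$-dimensional irreducible representation of $\Sigma_3$ (permuting $x_0,x_2,x_3$ and fixing $x_1$), the invariant sublattice is trivial; hence the only $\Sigma_3$-fixed, and so the only $\mathrm{Aut}(S)$-equivariant, centre for a linearisation of $\mathcal O_{\mathbb H}(m)$ is $(a,b)=(0,0)$, which is precisely the linearisation of Corollary~\ref{corollary:stability-3A2}.

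For step (2) I would show that every Gromov--Hausdorff limit $(W,(1-\beta)\Delta)$ of the pairs $(S,(1-\beta)D_i)$ satisfies $W\cong S$ and $\Delta\in|-K_S|$. As in Proposition~\ref{prop:GH-limit-cubic pairs}, continuity of volumes gives $(-K_W-(1-\beta)\Delta)^2=3\beta^2$, so Theorem~\ref{theorem:liu-estimate} together with \eqref{eq:volume-inequality-simplification} and the hypothesis $\beta>\frac{\sqrt3}{2}$ forces $|\Gamma|\leq 3$ at every singular point of $W$; thus $W$ is a cubic surface with at worst $\boldsymbol A_1$ or $\boldsymbol A_2$ singularities, hence a GIT-semistable cubic. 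Since the surface is constant in the present family, $W$ is a $\mathbb Q$-Gorenstein degeneration of $S$, and as $S$ (the unique cubic with three $\boldsymbol A_2$ points) is the \emph{closed} orbit of Hilbert's quotient of cubic surfaces, the only such degeneration is $S$ itself. Hence $W\cong S$, $\Delta$ is a hyperplane section, and every limit is represented by a point of $\mathbb H=\mathbb P(H^0(S,-K_S))$.

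Step (3), that log K-(semi/poly)stability of $(S,(1-\beta)D)$ implies GIT$_{\vec 0}$-(semi/poly)stability of $D$, is an application of Theorem~\ref{theorem:K-stability-implies-GIT}(ii). Its hypotheses hold: $S$ is a normal Fano, $\mathrm{Aut}(S)$ is reductive with trivial rational character group (by step (1) the only characters are the finite sign characters of $\Sigma_3$, which do not affect the linearisation), and there exists a GIT-polystable $D_0$ with $(S,(1-\beta)D_0)$ log K-polystable --- take $D_0=H_*$, or any $D_0$ missing $\mathrm{Sing}(S)$, which is GIT-polystable by Corollary~\ref{corollary:stability-3A2} and K-polystable by Proposition~\ref{prop:K-stability-cubics-chamber1}. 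The theorem then yields ampleness of $\Lambda_{CM,\beta}$ and, its canonical linearisation being the symmetric one, the desired implication for the $\vec 0$-quotient. This is exactly the point where, as anticipated before the statement, one cannot bootstrap from K-stability alone and must invoke the explicit quotient.

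Steps (4)--(6) are then formal: by steps (2)--(3) and Berman's theorem \cite{Berman-KE-implies-K-polystability} one obtains a well-defined map $\phi\colon\overline M_\beta^{GH}\to\overline M^{GIT}$, injective by uniqueness of conical K\"ahler--Einstein metrics and continuous by \cite{Chen-Donaldson-Sun-Kstability-all} and the Luna slice theorem. Every GIT-stable $D$ yields a K-polystable pair by Proposition~\ref{prop:K-stability-cubics-chamber1}, so $\mathrm{Im}(\phi)$ is dense, and it is compact as the continuous image of the compact $\overline M_\beta^{GH}$; hence $\phi$ is a continuous bijection from a compact space to a Hausdorff space, therefore a homeomorphism. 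The principal obstacle is step (3): pinning down that the canonical $\mathrm{Aut}(S)$-linearisation of the CM line bundle is the symmetric $a=b=0$ one, so that GIT stability matches Corollary~\ref{corollary:stability-3A2}, and supplying the required GIT-polystable K-polystable pair --- both of which genuinely use the explicit GIT analysis rather than K-stability alone --- with the rigidity $W\cong S$ of step (2) as the remaining subtlety.
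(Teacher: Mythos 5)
Your proposal is correct and follows essentially the same route as the paper's proof: the normalised-volume bound forcing $W$ to be a cubic with at worst $\boldsymbol{A}_2$ singularities and hence $W\cong S$ by rigidity of the degeneration, the appeal to Theorem \ref{theorem:K-stability-implies-GIT}(ii) seeded with the GIT-polystable, log K-polystable pair supplied by Corollary \ref{corollary:stability-3A2} and Proposition \ref{prop:K-stability-cubics-chamber1}, and the standard injectivity/continuity/compactness argument with density read off from comparing the two classifications. Your extra observation that $\Sigma_3$-equivariance pins the linearisation to $(a,b)=(0,0)$ is a useful elaboration of a point the paper leaves implicit, but it does not change the structure of the argument.
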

		\begin{proof}
			Let $(W, (1-\beta)\Delta)$ be the Gromov-Hausdorff limit of K\"ahler-Einstein pairs $(S, (1-\beta)D_i)$ with $D_i\in |-K_S|$. Let $p\in W$ be a singularity. Since $W$ is a surface, $p$ is klt and hence a quotient singularity, locally analytically isomorphic to $\mathbb C^2/\Gamma$, where $\Gamma$ is a finite group. By flatness, we have that $(-K_W-(1-\beta)\Delta)^2=3\beta^2$. From Theorem \ref{theorem:liu-estimate}, \eqref{eq:volume-inequality-simplification} and Liu's estimate $\widehat{\mathrm{vol}}_{\mathbb C^2/\Gamma, 0}=\frac{4}{\Gamma}$, it follows that  $|\Gamma|\leqslant 3$, so $p$ is an $\boldsymbol{A}_2$ or $\boldsymbol{A}_1$ singularity. Hence, by the classification of cubic surfaces with canonical singularities, $W$ is a cubic surface with at worst $\boldsymbol{A}_2$ singularities, but since it is a degeneration of $S$, we have that $S=W$ and $D\in |-K_S|$. By Lemma \ref{lemma:AutS} we have that $\mathrm{Aut}(S)$ is reductive. By Proposition \ref{prop:K-stability-cubics-chamber1} and Corollary \ref{corollary:stability-3A2} we have that there is some K-polystable pair $(S, (1-\beta)D)$ such that the point $D\in \mathbb P(H^0(S, -K_S))$ is GIT polystable. Hence, by Theorem \ref{theorem:K-stability-implies-GIT} (ii), we can define a map $\phi\colon\overline{M}^{GH}_\beta\rightarrow \overline M^{GIT}$, which sends $(W, (1-\beta)\Delta)$ to $[\Delta]$. As it is customary, injectivity of $\phi$ follows by uniqueness of K\"ahler-Einstein metrics, continuity is a consequence of \cite{Chen-Donaldson-Sun-Kstability-all} and the Luna slice theorem and the image of $\phi$ is compact since $\phi\colon\overline{M}^{GH}_\beta$ is compact. We have that $\phi$ is a homeomorphism if it is surjective, since $\phi$ is a continuous map between a
			compact space and a Hausdorff space. Surjectivity of $\phi$ follows if the image of $\phi$ is open and dense. The latter is a consequence of comparing the descriptions of $\overline{M}^{GH}_\beta$ and $M^{GIT}$ in proposition \ref{prop:K-stability-cubics-chamber1} and Corollary \ref{corollary:stability-3A2}.
		\end{proof}		
		
		\subsection{Behaviour of conical K\"ahler-Einstein metrics near the singularities}
		
		It is natural to ask if one can be more precise on the behaviour of the K\"ahler-Einstein metrics on the singular log pairs which appear in our constructions. The picture is still not complete, mainly due to the missing proofs of the results of \cite{Donaldson-Sun-GH-limits-II} for tangent cones in the case of log pairs. However, we expect the following  generalisations to the log setting of many results known to hold in the absolute case. 
		\begin{conjecture} Let $(W,(1-\beta)\Delta, g)$ be a Gromov-Hausdorff limit of log Fano pairs admitting a K\"ahler-Einstein metric. The following properties hold:
			\begin{enumerate}
				\item For any point $p \in (W,(1-\beta)\Delta, g)$ the metric tangent cone at $p$ is unique (analogous to \cite{Donaldson-Sun-GH-limits-II}).
				\item The metric density  $\Theta_p$ is equal to the normalised volume of the singularity $\widehat{\mathrm{vol}}_{W,p}$ (analogous to \cite{Hein-Sun, Li-Xu-Kollar-Higher-rational-rank}).
				\item The metric tangent cone can be understood algebraic via the two steps construction  (analogous to \cite{Donaldson-Sun, Li-Xu-Kollar-Higher-rational-rank, Li-Wang-Xu-Metric-tangent-cones}).
			\end{enumerate}
		\end{conjecture}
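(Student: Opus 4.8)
Since the final statement is a conjecture whose three parts are log generalisations of theorems established in the absolute case, the plan is to transport the arguments of Donaldson--Sun \cite{Donaldson-Sun-GH-limits-II, Donaldson-Sun}, Hein--Sun \cite{Hein-Sun} and Li--Wang--Xu \cite{Li-Wang-Xu-Metric-tangent-cones, Li-Xu-Kollar-Higher-rational-rank} to the conical setting, carrying the boundary divisor through every step. The foundational task, on which all three parts rest, is to establish the log analogue of \cite{Donaldson-Sun-GH-limits-II}: for a Gromov--Hausdorff limit $(W,(1-\beta)\Delta,g)$ of conical K\"ahler--Einstein pairs and any $p\in W$, every metric tangent cone $C_p$ is an affine algebraic variety carrying a polarising $\mathbb{R}_{\geqslant 0}$-grading, together with a limit boundary cone $\Delta_\infty\subset C_p$. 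The analytic input is Cheeger--Colding theory in the presence of conical singularities of angle $2\pi\beta$ along $\Delta$, which guarantees that tangent cones exist and are metric cones; the algebraicity is then obtained, as in the absolute case, via H\"ormander $L^2$-estimates producing holomorphic functions of prescribed homogeneous growth, with weights tracking the angle along $\Delta$. Granting this, part (1) follows the Donaldson--Sun scheme: the filtration of the local ring by growth order of holomorphic functions, now enriched by the order of vanishing along $\Delta$, is shown to be independent of the rescaling sequence, so that the associated graded object $(C_p,\Delta_\infty)$ is canonically attached to $p$.

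For part (2), the plan is to identify the metric density $\Theta_p$ with $\widehat{\mathrm{vol}}_{(W,(1-\beta)\Delta),p}$ by matching the valuation $v_0$ defining the tangent cone with the minimiser of the normalised volume functional. The $\mathbb{R}_{\geqslant 0}$-grading on $C_p$ determines a valuation on the local ring of $(W,\Delta)$ at $p$, and a conical Bishop--Gromov comparison identifies $\Theta_p$ with $\widehat{\mathrm{vol}}(v_0)$; it then remains to prove that $v_0$ realises the infimum in the definition of $\widehat{\mathrm{vol}}$, which is the log version of \cite{Li-Xu-Kollar-Higher-rational-rank} and relies essentially on the minimisation theory for pairs recalled in \cite{Li-Liu-Xu-normalized-volume-guide}.

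For part (3), the two-step construction, one would first produce the degeneration of $(W,\Delta)$ to a K-semistable log cone $(W_0,\Delta_0)$ induced by the (quasi-monomial) minimising valuation $v_0$ --- the log stable degeneration theorem --- and then pass to the unique K-polystable degeneration of $(W_0,\Delta_0)$. Matching this purely algebraic object with the analytic $C_p$ of part (1) closes the circle and simultaneously confirms that the density computed metrically agrees with the normalised volume computed algebraically.

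\textbf{The main obstacle} is precisely the analytic foundation, namely the absence of a log version of \cite{Donaldson-Sun-GH-limits-II}, which the authors explicitly flag. Cheeger--Colding theory and the H\"ormander construction must be run where the metric is only transversally H\"older-continuous across $\Delta$ and the usual elliptic estimates degenerate; controlling holomorphic functions and the volume ratio uniformly near points where $\Delta$ meets the singular locus of $W$ is the heart of the difficulty. Once this is in place, parts (2) and (3) should reduce to the already substantially developed algebraic theory of normalised volumes for pairs.
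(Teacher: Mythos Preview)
The paper does not prove this statement: it is presented as a \emph{conjecture}, with no proof or proof sketch given. The surrounding text merely records that the analytic foundation is missing (``mainly due to the missing proofs of the results of \cite{Donaldson-Sun-GH-limits-II} for tangent cones in the case of log pairs'') and then notes that certain very special surface cases are known by the independent methods of \cite{deBorbon-Spotti-line-arrangements}. There is therefore no ``paper's own proof'' to compare your proposal against.

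Your write-up is not a proof but a plausible programme, and as such it is faithful to the paper's intent: you correctly identify that the crux is a log version of Donaldson--Sun's tangent cone algebraicity and uniqueness, after which the valuative and stable-degeneration machinery for pairs should take over. You also correctly flag the main obstacle as the analytic foundation across the conical divisor, which is precisely the gap the authors themselves highlight. One small mismatch: in part (2) you aim to identify $\Theta_p$ with $\widehat{\mathrm{vol}}_{(W,(1-\beta)\Delta),p}$, whereas the conjecture as stated asks for $\widehat{\mathrm{vol}}_{W,p}$ (no boundary). This is likely a slip in the paper's formulation rather than in your reasoning, but you should be aware of the discrepancy.
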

		
		We point out that in some situations the above conjecture is already known to hold by different techniques. In \cite{deBorbon-Spotti-line-arrangements} it is proved that certain log Calabi-Yau metric on surfaces have unique tangent cones given by the expected models, and in such cases it has been checked that metric densities and normalised volumes agree. By applying an implicit function theorem, there are situations in which the log pair is log Fano too (but only close to the Calabi-Yau threshold). 
		
		For a concrete example, note that our theorems give instances of conical K\"ahler-Einstein metrics on pairs $(C,(1-\beta)H)$, where $C$ is a smooth cubic surface and $H$ is an anticanonical divisor with a cuspidal singularity. Since the value of $\beta$ we consider is greater than $5/6$, according to the above conjecture the behaviour of the metric near the singularity of the cusp is given by a tangent cone splitting as $\mathbb{C} \times \mathbb{C}_{\bar\beta}$ with $1-\bar\beta=2(1-\beta)$.
		
	\bibliographystyle{alphatuned}
	\bibliography{bibliography}

\end{document}